\documentclass[11pt]{amsart}

\usepackage{amscd,amssymb,amsopn,amsmath,amsthm,mathrsfs,graphics,amsfonts,enumerate,verbatim,calc
}

\usepackage[all]{xy}

\usepackage{color}

%\usepackage[OT2,OT1]{fontenc}
%\normalfont
%\selectfont}

\usepackage[OT2,OT1]{fontenc}
\newcommand\cyr{%
\renewcommand\rmdefault{wncyr}%
\renewcommand\sfdefault{wncyss}%
\renewcommand\encodingdefault{OT2}%
\normalfont
\selectfont}
\DeclareTextFontCommand{\textcyr}{\cyr}

\usepackage{amssymb,amsmath}

\DeclareFontFamily{OT1}{rsfs}{}
\DeclareFontShape{OT1}{rsfs}{n}{it}{<-> rsfs10}{}
\DeclareMathAlphabet{\mathscr}{OT1}{rsfs}{n}{it}

\topmargin=0in
\oddsidemargin=0in
\evensidemargin=0in
\textwidth=6.5in
\textheight=8.5in

\numberwithin{equation}{section}
\hyphenation{semi-stable}
%\emergencystretch=10pt

%\def\thefootnote{*}
%\setcounter{section}{-1}
\newtheorem{theorem}{Theorem}[section]
\newtheorem{lemma}[theorem]{Lemma}
\newtheorem{proposition}[theorem]{Proposition}
\newtheorem{corollary}[theorem]{Corollary}

\newtheorem{question}{Question}
\newtheorem{Problem}{Problem}

\newtheorem{maintheorema}{Main Theorem A}

\newtheorem{maintheoremb}{Main Theorem B}

\newtheorem{maintheoremc}{Main Theorem C}

\newtheorem{maintheoremd}{Main Theorem D}

\newtheorem{maincorollary1}{Corollary}

\theoremstyle{definition}
\newtheorem{definition}[theorem]{Definition}
\newtheorem{remark}[theorem]{Remark}
\theoremstyle{remark}

\newtheorem{example}[theorem]{Example}
\newtheorem{acknowledgement}{Acknowledgement}

\newcommand{\Ass}{\operatorname{Ass}}
\newcommand{\im}{\operatorname{Im}}
\renewcommand{\ker}{\operatorname{Ker}}

\newcommand{\Spec}{\operatorname{Spec}}

\newcommand{\Ht}{\operatorname{ht}}

\newcommand{\Ext}{\operatorname{Ext}}

\newcommand{\Supp}{\operatorname{Supp}}

\newcommand{\depth}{\operatorname{depth}}

\newcommand{\coker}{\operatorname{Coker}}

\newcommand{\fm}{\frak{m}}
\newcommand{\fp}{\frak{p}}
\newcommand{\fq}{\frak{q}}
\newcommand{\fa}{\frak{a}}

%\renewcommand{\baselinestretch}{1.27}

%%%%%%%%%%%%%%%%%%%%%%%%%%%%%%%%%%%%%%%%%%%%%%%%%%%%%%%%%%%%%%%%%%%%%%%%%%%%%%

\begin{document}
\title[$F$-injectivity and Frobenius closure of ideals in Noetherian rings]
{$F$-injectivity and Frobenius closure of ideals in Noetherian rings of characteristic $p>0$}

\author[P. H. Quy]{Pham Hung Quy}
\address{Department of Mathematics, FPT University, Hoa Lac Hi-Tech Park, Ha Noi, Viet Nam}
\email{quyph@fe.edu.vn}

\author[K. Shimomoto]{Kazuma Shimomoto}
\address{Department of Mathematics, College of Humanities and Sciences, Nihon University, Setagaya-ku, Tokyo 156-8550, Japan}
\email{shimomotokazuma@gmail.com}
\thanks{2010 {\em Mathematics Subject Classification\/}:13A35, 13D45, 13H10.\\
P.H. Quy is partially supported by a fund of Vietnam National Foundation for Science
and Technology Development (NAFOSTED) under grant number
101.04-2017.10. This paper was written while the first author was visiting Vietnam Institute for Advanced Study in Mathematics. He would like to thank the VIASM for hospitality and financial support. \\
K. Shimomoto is partially supported by Grant-in-Aid for Young Scientists (B) \# 25800028.}

\keywords{$F$-injective ring, $F$-pure ring, Frobenius closure, filter regular sequence, generalized Cohen-Macaulay ring, local cohomology, limit closure.}

%\subjclass{13}
%\subjclass[2000]{Primary 13-XX}
%\subjclass[2000]{Primary ; Secondary}
%\date{\today \, (\printtime)}
%\date{\today}

\begin{abstract}
The main aim of this article is to study the relation between $F$-injective singularity and the Frobenius closure of parameter ideals in Noetherian rings of positive characteristic. The paper consists of the following themes, including many other topics.
\begin{enumerate}
\item
We prove that if every parameter ideal of a Noetherian local ring of prime characteristic $p>0$ is Frobenius closed, then it is $F$-injective.

\item
We prove a necessary and sufficient condition for the injectivity of the Frobenius action on $H^i_{\fm}(R)$ for all $i \le f_{\fm}(R)$, where $f_{\fm}(R)$ is the finiteness dimension of $R$. As applications, we prove the following results. (a) If the ring is $F$-injective, then every ideal generated by a filter regular sequence, whose length is equal to the finiteness dimension of the ring, is Frobenius closed. It is a generalization of a recent result of Ma and which is stated for generalized Cohen-Macaulay local rings. (b) Let $(R,\fm,k)$ be a generalized Cohen-Macaulay ring of characteristic $p>0$. If the Frobenius action is injective on the local cohomology $H_{\fm}^i(R)$ for all $i < \dim R$, then $R$ is Buchsbaum. This gives an answer to a question of Takagi.
%We also give an elementary proof of results of Bhatt, Ma and Schwede.

\item
We consider the problem when the union of two $F$-injective closed subschemes of a Noetherian $\mathbb{F}_p$-scheme is $F$-injective. Using this idea, we construct an $F$-injective local ring $R$ such that $R$ has a parameter ideal that is not Frobenius closed. This result adds a new member to the family of $F$-singularities.

\item
We give the first ideal-theoretic characterization of $F$-injectivity in terms the Frobenius closure and the limit closure. We also give an answer to the question about when the Frobenius action on the top local cohomology is injective.
\end{enumerate}
\end{abstract}

\maketitle

\begin{center}
{\textit{Dedicated to Prof. Shiro Goto on the occasion of his 70th birthday}}
\end{center}

\tableofcontents

\section{Introduction}

In this paper, we study the behavior of the Frobenius closure of ideals generated by a system of parameters (which we call a \textit{parameter ideal}) of a given Noetherian ring containing a field of characteristic $p>0$. Then we investigate how the \textit{F-injectivity} condition is related to the Frobenius closure of parameter ideals. Recall that a local ring $(R, \fm)$ of positive characteristic is $F$-injective if the natural Frobenius action on all local cohomology modules $H^i_{\fm}(R)$ are injective (cf. \cite{F83}). $F$-injective rings together with $F$-regular, $F$-rational and $F$-pure rings are the main objects of the family of singularities defined by the Frobenius map and they are called the \textit{F-singularities}. $F$-singularities appear in the theory of \textit{tight closure} (cf. \cite{H96} for its introduction), which was systematically introduced by Hochster and Huneke around the mid 80's \cite{HH90} and developed by many researchers, including Hara, Schwede, Smith, Takagi, Watanabe, Yoshida and others. A recent active research of $F$-singularities is centered around the correspondence with the singularities of the minimal model program. We recommend \cite{TW14} as an excellent survey for recent developments. It should be noted that the class of $F$-injective rings is considered to be the largest among other notable classes of $F$-singularities.

Under mild conditions of rings, $F$-regularity, $F$-rationality and $F$-purity can be checked by computing either the tight closure, or the Frobenius closure of (parameter) ideals. However, there was no known characterization of $F$-injectivity in terms of a closure operation of (parameter) ideals. If $R$ is Cohen-Macaulay, Fedder proved that $R$ is $F$-injective if and only if every parameter ideal is Frobenius closed. More than thirty years later after Fedder's work appeared, Ma extended Fedder's result for the class of \textit{generalized Cohen-Macaulay local rings}. Therefore, it is quite natural to ask the following question (cf. \cite[Remark 3.6]{M15}):

\begin{question}
\label{questionMa}
Is it true that a local ring is $F$-injective if and only if every parameter ideal is Frobenius closed?
\end{question}

Both authors of the present paper are very interested in the works of Ma in \cite{M14} and \cite{M15}. We started our join work when the first author was able to prove one direction of the above question. Indeed, this is the first main result of this paper and stated as follows.

\begin{maintheorema}[Theorem \ref{T1.2}]
Let $(R,\fm,k)$ be a local ring of characteristic $p>0$. Assume that every parameter ideal is Frobenius closed. Then $R$ is $F$-injective.
\end{maintheorema}

However, we prove that the converse of this theorem does not hold by constructing an explicit example (see Theorem \ref{counterexample}). It should be noted that the example in Theorem \ref{counterexample} comes from our geometrical consideration of patching two $F$-injective closed subschemes. More precisely, we prove the following theorem.

\begin{maintheoremb}[Example \ref{E6.1}, Theorem \ref{counterexample} and Corollary \ref{F-example}]
There exists an $F$-finite local ring $(R,\fm,k)$ of characteristic $p>0$ which is non-Cohen-Macaulay, $F$-injective, but not $F$-pure. Moreover, $R$ has a parameter ideal that is not Frobenius closed.
\end{maintheoremb}

Thus, the above theorems give an answer to Question \ref{questionMa} in a complete form, and it seems reasonable to call $(R,\fm,k)$ \textit{parameter F-closed} if every parameter ideal of $R$ is Frobenius closed (cf. Definition \ref{parameterF-closed}). It is true that every $F$-pure local ring is parameter $F$-closed and every parameter $F$-closed ring is $F$-injective as a consequence of our main theorems. However, our example in Main Theorem B may not be optimal, since the ring is not equidimensional. There is some hope that Question \ref{questionMa} has an affirmative answer when the ring is equidimensional with additional mild conditions. We note that a generalized Cohen-Macaulay ring is equidimensional. Inspired by Ma's work on generalized Cohen-Macaulay rings, we study $F$-injective rings in connection with the finiteness dimension. Recall that the finiteness dimension of $R$ is defined as
$$
f_{\fm}(R):=\inf \{i \,|\, H^i_{\fm}(R)~\text{is not finitely generated} \} \in \mathbb{Z}_{\ge 0} \cup \{\infty\}.
$$
We have the following theorem.

\begin{maintheoremc}[Theorem \ref{finitedim}]
Let $(R, \fm)$ be a reduced $F$-finite local ring of characteristic $p>0$ with $f_{\fm}(R)=t$ and let $s \le t$ be a positive integer. Then the following statements are equivalent.
\begin{enumerate}
\item
The Frobenius action on $H^i_{\fm}(R)$ is injective for all $i \le s$.

\item
Every filter regular sequence $x_1, \ldots, x_s$ of both $R$ and $R^{1/p}/R$ generates a Frobenius closed ideal of $R$ and it is a standard sequence on $R$.

\item
Every filter regular sequence $x_1, \ldots, x_s$ of both $R$ and $R^{1/p}/R$ generates a Frobenius closed ideal of $R$.
\end{enumerate}
\end{maintheoremc}

We deduce many interesting results from Main Theorem C. First, we generalize Ma's results \cite{M15} in terms of finiteness dimension.

\begin{maincorollary1}[Theorem \ref{maintheoremA} and Corollary \ref {Buchsbaum}]
Let $(R,\fm,k)$ be an $F$-injective local ring with $f_{\fm}(R)=t$. Then every filter regular sequence of length $t$ is a standard sequence, and every ideal generated by a filter regular sequence of length at most $t$ is Frobenius closed. If $R$ is $F$-injective and generalized Cohen-Macaulay, then every parameter ideal is Frobenius closed and $R$ is Buchsbaum.
\end{maincorollary1}

The last assertion in the above corollary is Ma's affirmative answer to a question of Takagi. Ma's proof follows from his result on the equivalence between the class of $F$-injective rings and the class of parameter $F$-closed rings for a generalized Cohen-Macaulay ring $R$, together with the result of Goto and Ogawa in \cite{GO83}, who showed that if a parameter $F$-closed local ring (in our terminology) is generalized Cohen-Macaulay, then it is Buchsbaum. Using Main Theorem C and Goto-Ogawa's argument, we prove the following corollary.

\begin{maincorollary1}[Corollary \ref{takagi}]
Let $(R, \fm)$ be a reduced $F$-finite generalized Cohen-Macaulay local ring with $d=\dim R$. Suppose that the Frobenius action on $H^i_{\fm}(R)$ is injective for all $i<d$. Then $R$ is Buchsbaum.
\end{maincorollary1}

Finally, we state a result that gives an ideal-theoretic characterization of $F$-injectivity via the notion of {\it limit closure}. The notion of limit closure appears naturally when we consider local cohomology as the direct limit of Koszul cohomology for non-Cohen-Macaulay rings. The limit closure of a sequence of elements $x_1, \ldots, x_t$ in a ring $R$ is defined as follows
$$
(x_1,\ldots,x_t)^{\lim}=\bigcup_{n>0}\big((x_1^{n+1},\ldots,x_t^{n+1}):_R (x_1\cdots x_t)^n \big)
$$
with the convention that $(x_1,\ldots,x_t)^{\lim} = 0$ when $t=0$. Note that $(x_1,\ldots,x_t)^{\lim}$ is an ideal of $R$. We prove the following theorem.

\begin{maintheoremd}[Theorem \ref{toplc} and Theorem \ref{characterization}]
Let $(R, \fm)$ be a local ring of characteristic $p>0$ and of dimension $d>0$. Then we have the following statements.

\begin{enumerate}
\item
The Frobenius action on the top local cohomology $H^d_{\fm}(R)$ is injective if and only if $\fq^F \subseteq \fq^{\lim}$ for all parameter ideals $\fq$.

\item The following statements are equivalent.
\begin{enumerate}
\item
$R$ is $F$-injective

\item
For every filter regular sequence $x_1,\ldots,x_d$, we have
$$
(x_1,\ldots,x_t)^F \subseteq (x_1,\ldots,x_t)^{\lim}
$$
for all $0 \le t \le d$.
\item
There is a filter regular sequence $x_1,\ldots,x_d$ such that
$$
(x_1^n,\ldots,x_t^n)^F \subseteq (x_1^n,\ldots,x_t^n)^{\lim}
$$
for all $0 \le t \le d$ and for all $n \ge 1$.
\end{enumerate}
\end{enumerate}
\end{maintheoremd}

Main Theorem D is not only a generalization of Main Theorem A, but it also helps us better understand Main Theorems B and C. As other side topics, we also discuss problems such as non-$F$-injective locus and localizations of $F$-parameter closed rings. Many questions concerning $F$-injectivity are addressed in the last section. \\

The main technique of this paper is to analyze the local cohomology modules by filter regular sequence via the Nagel-Schenzel isomorphism (cf. Lemma \ref{nagel-shenzel}). It is worth noting that the notion of filter regular sequence has arisen from the theory of generalized Cohen-Macaulay ring in \cite{CST78} and has become a powerful tool in many problems of commutative algebra nowadays. The authors hope that the present paper will shed light on the connection between tight closure theory and non-Cohen-Macaulay rings via techniques developed in this article. The structure of this paper is as follows.

In \S~\ref{sec1}, we introduce notation and give a brief review on Frobenius closure of ideals, Frobenius action, and the local cohomology modules.

In \S~\ref{sec2}, We recall some standard results on the Frobenius closure and give complete proofs to them for the convenience of readers. An important fact to keep in mind is that the formation of Frobenius closure of ideals and $F$-injectivity commute with localization. After that, we recall the notion of filter regular sequence. The reader will find that this notion, together with the Nagel-Schenzel isomorphism, will play a prominent role in proving the injectivity-type results on local cohomology modules under the Frobenius action. The main theorem A will be proven in this section (cf. Theorem \ref{T1.2}).

In \S~\ref{sec3}, firstly, we review the definition of generalized Cohen-Macaulay rings and Buchsbaum rings. They form a wider class than that of Cohen-Macaulay rings. The notion of standard parameter ideal plays an important role in the theory of generalized Cohen-Macaulay rings. Then we combine these notions with the notion of the finiteness dimension. We prove Main Theorem C in this section (cf. Theorem \ref{finitedim}). Among many consequences, we recover Ma's result on generalized Cohen-Macaulay $F$-injective rings (cf. Theorem \ref{maintheoremA} and Corollary \ref{takagi}).
%and a recent result of Bhatt, Ma and Schwede (cf. Theorem \ref{bhatt-ma-schwede}).

In \S~\ref{sec4}, we compare $F$-injective and $F$-pure rings. In geometrical setting, we consider the problem when the union of two $F$-injective closed subschemes is again $F$-injective (cf. Theorem \ref{geometry}). Our result is useful in the construction of certain local rings in characteristic $p>0$.

In \S~\ref{sec5}, we construct some interesting examples. The main result in this section is that there exists a local ring that is $F$-injective with a parameter ideal that is not Frobenius closed (cf. Theorem \ref{counterexample}). Then we define the parameter $F$-closed rings as a new member of $F$-singularities.

In \S~\ref{sec6}, we prove an ideal-theoretic characterization of $F$-injectivity using the limit closure (cf. Theorem \ref{characterization}). Then we consider the injectivity of the Frobenius action on the top local cohomology (cf. Theorem \ref{toplc}). The readers are encouraged to ponder on this characterization to shed light on the previous results (some results of \cite{CQ14} may be helpful).

In \S~\ref{sec7}, we make a list of some open problems for the future research.

\begin{acknowledgement}
We are deeply grateful to Linquan Ma for his inspiring works. Indeed, Corollary \ref{takagi} was born from the useful and fruitful discussions with him. We are also grateful to Prof. Shihoko Ishii for a valuable comment. The authors are grateful to the referee for his/her carefully reading and useful comments.
\end{acknowledgement}

\section{Notation and conventions}
\label{sec1}
In this paper, all rings are (Noetherian) commutative with unity. A \textit{local ring} is a commutative Noetherian ring with the unique maximal ideal $\fm$. Denote a local ring by $(R,\fm,k)$. Let $M$ be a finitely generated module over a local ring $(R,\fm,k)$. We say that an ideal $\fq \subseteq R$ is a \textit{parameter ideal} of $M$, if $\fq$ is generated by a system of parameters of $M$. We say that a ring $R$ is \textit{equidimensional}, if $\dim R=\dim R/\fp$ for all minimal primes $\fp$ of $R$. Let $\underline{x}:=x_1,\ldots,x_n$ be a sequence of elements in a ring $R$. For an $R$-module $M$, let $H^i(\underline{x};M)$ denote the \textit{i-th Koszul cohomology module} of $M$ with respect to $\underline{x}$. We employ the convention that $\dim M=-1$ if $M$ is a trivial module.

Let $R$ be a Noetherian ring containing a field of characteristic $p>0$. Let $F:R \to R$ denote the Frobenius endomorphism (the $p$-th power map). We say that a Noetherian ring $R$ is \textit{$F$-finite} if $F:R \to R$ is module-finite. Let us recall the definition of the Frobenius closure with its properties briefly. Let $I=(x_1,\ldots,x_t)$ be an ideal of $R$. The \textit{Frobenius closure} of $I$, denoted by $I^F$, is defined to be the set of all elements of $R$ satisfying the following property: $u \in I^F$ if and only if $u^q \in I^{[q]}$ for $q=p^e \gg 0$, where $I^{[q]}:=(x_1^q,\ldots,x_t^q)$. Indeed, $I^F$ is an ideal of $R$. If $I$ is a parameter ideal of a local ring $R$, then so is $I^{[q]}$. Let $R$ be a Noetherian ring with $I$ its proper ideal. Then we denote by $H^i_I(R)$ the \textit{i-th local cohomology module} with support at $I$ (cf. \cite{BS98} and \cite{ILL07} for local cohomology modules). Recall that local cohomology may be computed as the homology of the \v{C}ech complex
$$
0 \to R \to \bigoplus_{i=1}^t R_{x_i} \to \cdots \to R_{x_1 \ldots x_t} \to 0.
$$
Let $R$ be a Noetherian ring of characteristic $p>0$ with an ideal $I=(x_1,\ldots,x_t)$. Then the Frobenius endomorphism $F:R \to R$ induces a natural Frobenius action $F_*:H^i_I(R) \to H^i_{I^{[p]}}(R) \cong H^i_{I}(R)$ (cf. \cite{BH98} for this map). There is a very useful way of describing the top local cohomology. It can be given as the direct limit of Koszul cohomologies
$$
H^t_I(R) \cong \lim_{\longrightarrow} R/(x_1^n, \ldots, x_t^n).
$$
Then for each $\overline{a} \in H^t_{I}(R)$, which is the canonical image of $a+(x_1^n, \ldots, x_t^n)$, we find that $F_*(\overline{a})$ is the canonical image of $a^p +(x_1^{pn}, \ldots, x_t^{pn})$.

By a \textit{Frobenius action} on local cohomology modules, we always mean the one as defined above. A local ring $(R,\fm,k)$ is \textit{$F$-injective} if the Frobenius action on $H^i_{\fm}(R)$ is injective for all $i \ge 0$. Assume that $R$ is a reduced ring of characteristic $p>0$ with minimal prime ideals $\fp_1,\ldots, \fp_r$. Consider the natural inclusions:
$$
R \hookrightarrow \prod_{i=1}^r R/\fp_i \hookrightarrow \prod_{i=1}^r \overline{Q(R/\fp_i)},
$$
where $\overline{Q(R/\fp_i)}$ is the algebraic closure of the quotient field $Q(R/\fp_i)$ of $R/\fp_i$. We define
$$
R^{1/p^e}:= \big \{ x \in \prod_{i=1}^r \overline{Q(R/\fp_i)} \,\, \big | \,\, x^{p^e} \in R \big \}.
$$
We note that $(R,\fm)$ is $F$-injective if and only if the inclusion $R \hookrightarrow R^{1/p^e}$ induces the injective map of local cohomology $H^i_{\fm}(R) \hookrightarrow H^i_{\fm}(R^{1/p^e})$ for all $i \ge 0$. We will review definitions and results from generalized Cohen-Macaulay rings in \S~\ref{sec3}.

\section{Frobenius action on local cohomology modules}
\label{sec2}

\subsection{Frobenius closure of ideals}
We collect basic known facts on the Frobenius closure of ideals in a Noetherian ring of characteristic $p>0$ for the convenience of readers.

\begin{lemma}\label{reduced}
Let $(R,\fm,k)$ be a local ring of characteristic $p>0$. If there is an element $x \in R$ such that $(x^n)$ is Frobenius closed for all $n >0$, then $R$ is reduced.
\end{lemma}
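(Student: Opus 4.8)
The plan is to prove the statement directly, by squeezing the nilradical of $R$ down to zero. The engine is a single elementary observation about Frobenius closures of arbitrary ideals, combined with Krull's intersection theorem. Throughout I take $x$ to be a nonunit, i.e. $x \in \fm$; this is the case of interest, since for a unit $x$ one has $(x^n)=R$ for all $n$ and the hypothesis carries no information.

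First I would record the key fact: \emph{the nilradical is contained in the Frobenius closure of every ideal}. Indeed, if $z \in R$ satisfies $z^m = 0$, then for any ideal $I$ and any $q = p^e \ge m$ we have $z^q = 0 \in I^{[q]}$, and so $z \in I^F$ straight from the definition of the Frobenius closure. Hence $\sqrt{0} \subseteq I^F$ for every ideal $I$ of $R$.

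Next I would feed the hypothesis into this observation. Taking $I=(x^n)$ and using that each $(x^n)$ is Frobenius closed gives
\[
\sqrt{0} \subseteq (x^n)^F = (x^n) \qquad \text{for every } n > 0,
\]
whence $\sqrt{0} \subseteq \bigcap_{n>0}(x^n)$. Since $x \in \fm$ we have $(x^n) \subseteq \fm^n$, and Krull's intersection theorem for the Noetherian local ring $(R,\fm)$ yields $\bigcap_{n>0}\fm^n = 0$. Therefore $\sqrt{0}=0$, that is, $R$ is reduced.

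The argument is short and I do not anticipate a genuine obstacle. The only point deserving care is the reduction to $x \in \fm$: the conclusion really does use that $x$ is a nonunit, so that the powers $(x^n)$ sink into $\fm^n$ and Krull's theorem applies. The two active ingredients—that nilpotents fall automatically into every Frobenius closure, and that the powers of a nonunit intersect to zero—are both routine once isolated.
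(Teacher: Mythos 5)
Your proof is correct and is essentially the paper's own argument: a nilpotent $u$ satisfies $u^q=0\in (x^n)^{[q]}$ for $q=p^e$ large, so $u\in (x^n)^F=(x^n)$ for all $n$, and Krull's intersection theorem finishes. Your explicit remark that $x$ must be a nonunit (so that $\bigcap_{n>0}(x^n)=0$ actually holds) is a small point the paper leaves implicit, but otherwise the two arguments coincide.
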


\begin{proof}
Assume that we have $u^m=0$ for $m>0$ and $u \in R$. Then we have $u^{q} \in (x^n)^{q}$ for all $q=p^e \ge m$ and $u \in (x^n)^F=(x^n)$. Hence $u \in \bigcap_{n>0} (x^n)=(0)$ by Krull's intersection theorem.
\end{proof}

\begin{lemma}
\label{L1.6}
Let $x_1,\ldots,x_k$ be a sequence of elements in a Noetherian ring $R$ of characteristic $p>0$ such that $x_1,\ldots,x_k$ is a regular sequence in any order and $(x_1,\ldots, x_k)$ is Frobenius closed. Then $(x_1^{n_1},\ldots,x_k^{n_k})$ is Frobenius closed for all integers $n_1,\ldots,n_k \ge 1$.
\end{lemma}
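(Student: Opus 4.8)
The plan is to induct on the total exponent $N = n_1 + \cdots + n_k$. The base case $N = k$, where every $n_i = 1$, is exactly the hypothesis that $(x_1,\ldots,x_k)$ is Frobenius closed. For the inductive step I would exploit the symmetry of the hypotheses (the $x_i$ form a regular sequence in any order, and the all-ones ideal is symmetric in the $x_i$) to assume, after relabeling, that $n_1 \ge 2$, and then raise the first exponent from $n_1 - 1$ to $n_1$. Both ideals $I' = (x_1^{n_1-1}, x_2^{n_2}, \ldots, x_k^{n_k})$ and $J = (x_1, x_2^{n_2}, \ldots, x_k^{n_k})$ have total exponent strictly less than $N$ (namely $N-1$ and $N-n_1+1$), so by the inductive hypothesis both are Frobenius closed; these are the only instances of the inductive hypothesis I will invoke. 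Throughout I would use the standard fact that the powers $x_1^{a_1}, \ldots, x_k^{a_k}$ again form a regular sequence in any order, which supplies the nonzerodivisor and colon properties needed below.

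Concretely, suppose $u \in I^F$, so $u^q \in I^{[q]}$ for some $q = p^e$, where $I = (x_1^{n_1}, \ldots, x_k^{n_k})$. Since $I^{[q]} \subseteq I'^{[q]}$, I first obtain $u \in I'^F = I'$, hence $u = x_1^{n_1-1} v + w$ with $w \in (x_2^{n_2}, \ldots, x_k^{n_k})$. The crucial point is that $q = p^e$ makes the Frobenius additive, so $u^q = x_1^{(n_1-1)q} v^q + w^q$ with $w^q \in (x_2^{n_2 q}, \ldots, x_k^{n_k q})$; subtracting and using $u^q \in I^{[q]}$ yields $x_1^{(n_1-1)q} v^q \in (x_1^{n_1 q}, x_2^{n_2 q}, \ldots, x_k^{n_k q})$. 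Cancelling the nonzerodivisor $x_1^{(n_1-1)q}$ modulo $(x_2^{n_2 q}, \ldots, x_k^{n_k q})$ then gives $v^q \in (x_1^q, x_2^{n_2 q}, \ldots, x_k^{n_k q}) = J^{[q]}$, so $v$ lies in the Frobenius closure $J^F = J$, which is closed by induction. Writing $v = x_1 b + c$ with $c \in (x_2^{n_2}, \ldots, x_k^{n_k})$ and substituting back, $u = x_1^{n_1} b + x_1^{n_1-1} c + w \in I$, which completes the step.

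The main obstacle, and the place where the set-up must be handled carefully, is the cancellation step: one needs $x_1^{(n_1-1)q}$ to be a nonzerodivisor on $R/(x_2^{n_2 q}, \ldots, x_k^{n_k q})$, and this is precisely where the hypothesis that the sequence is regular \emph{in any order} (rather than merely regular) is indispensable, since $R$ is only assumed Noetherian and permuting a regular sequence can destroy regularity. The second delicate point is the interaction between taking $q$-th powers and the regular-sequence structure: it is essential that $q$ be a power of $p$, so that $(x_1^{n_1-1}v + w)^q$ splits as $x_1^{(n_1-1)q}v^q + w^q$ with no cross terms, which is what lets the single colon relation be applied cleanly. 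Once these two facts are in place, the remaining bookkeeping is routine.
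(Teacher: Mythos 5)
Your proof is correct and follows essentially the same route as the paper's: pull $u$ into a larger, smaller-exponent Frobenius closed ideal, split $u^q$ using the additivity of Frobenius, cancel a power of $x_1$ modulo the remaining generators via regularity in any order, and apply an inductive hypothesis to the cofactor. The only difference is organizational: the paper inducts on the single exponent $n_1$ with the other exponents equal to $1$ (implicitly iterating over the variables), writing $u = b_1x_1+\cdots+b_kx_k$ and lifting $b_1$ into $(x_1^{n_1-1},x_2,\ldots,x_k)$, whereas you run one induction on the total exponent $n_1+\cdots+n_k$ and peel off the top via $u = x_1^{n_1-1}v+w$ — the cancellation step is the same in both.
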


\begin{proof}
It is enough to prove that $(x_1^{n_1}, x_2,\ldots,x_k)$ is Frobenius closed for all $n_1 \ge 1$. We proceed by induction on $n_1$. The case $n_1 = 1$ is trivial. For $n_1 >1 $, let us take $a \in (x_1^{n_1}, x_2,\ldots,x_k)^F$. Then $a^q \in (x_1^{n_1}, x_2,\ldots,x_k)^{[q]} \subseteq (x_1, x_2,\ldots,x_k)^{[q]}$ for $q=p^e \gg 0$. Therefore $a \in (x_1, x_2,\ldots,x_k)^F = (x_1, x_2,\ldots,x_k)$. So $a = b_1x_1+\cdots+b_k x_k$. We have
$$
a^q = b_1^qx_1^q+\cdots+b_k^qx_k^q \in (x_1^{n_1}, x_2,\ldots,x_k)^{[q]}
$$
and $b_1^qx_1^q \in (x_1^{n_1}, x_2,\ldots,x_k)^{[q]}$. Hence $b_1^qx_1^q-cx_1^{n_1q} \in (x_2,\ldots,x_k)^{[q]}$ for some $c$. Since $x_1,\ldots,x_k$ is a regular sequence in any order, we have $b_1^q - cx_1^{(n_1-1)q} \in (x_2,\ldots,x_k)^{[q]}$ and $b_1^q \in (x_1^{n_1-1},x_2,\ldots,x_k)^{[q]}$. Therefore, $b_1 \in (x_1^{n_1-1},x_2,\ldots,x_k)^F=(x_1^{n_1-1},x_2,\ldots,x_k)$ by induction hypothesis on $n_1$. Hence $a=b_1x_1+\cdots+b_kx_k \in (x_1^{n_1},x_2,\ldots,x_k)$, as required.
\end{proof}

\begin{lemma}
\label{commute}
Frobenius closure commutes with localization. In particular, localization of a Frobenius closed ideal is Frobenius closed.
\end{lemma}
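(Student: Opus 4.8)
The plan is to prove the sharper statement that for any multiplicatively closed subset $S \subseteq R$ and any ideal $I = (x_1,\ldots,x_t)$, the two ideals $(I^F)_S$ and $(S^{-1}I)^F$ of $S^{-1}R$ coincide; the ``in particular'' assertion then follows at once by setting $I^F = I$. Throughout I will use the elementary fact that bracket powers commute with localization, $(I^{[q]})_S = (S^{-1}I)^{[q]}$, which is immediate from $S^{-1}I = (x_1/1,\ldots,x_t/1)$, together with the observation that if $u^q \in I^{[q]}$ for a single $q=p^e$, then $u^{q'} \in I^{[q']}$ for every $q'=qp^{e'}$, so that the defining condition for $I^F$ may equally be read as ``for some $q$'' rather than ``for $q \gg 0$.''

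The inclusion $(I^F)_S \subseteq (S^{-1}I)^F$ is the routine direction. First I would take a generator $u/s$ with $u \in I^F$ and $s \in S$, choose $q$ with $u^q \in I^{[q]}$, and compute $(u/s)^q = u^q/s^q \in (I^{[q]})_S = (S^{-1}I)^{[q]}$; hence $u/s \in (S^{-1}I)^F$.

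The reverse inclusion $(S^{-1}I)^F \subseteq (I^F)_S$ is where the real content lies. I would take $v/t \in (S^{-1}I)^F$ with $v \in R$ and $t \in S$, so that $(v/t)^q = v^q/t^q$ lies in $(S^{-1}I)^{[q]} = (I^{[q]})_S$ for some $q$. Unwinding what membership in a localized ideal means, there exist $w \in I^{[q]}$ and $s \in S$ with $v^q/t^q = w/s$ in $S^{-1}R$, hence an element $s' \in S$ with $s's\,v^q = s't^q w \in I^{[q]}$. Writing $a = s's \in S$, this says $av^q \in I^{[q]}$. The key manoeuvre is then to absorb the multiplier $a$ into the bracket power: since $I^{[q]}$ is an ideal,
$$
(av)^q = a^q v^q = a^{q-1}\,(av^q) \in I^{[q]},
$$
so $av \in I^F$. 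Consequently $v/t = (av)/(at)$ with $av \in I^F$ and $at \in S$, which places $v/t$ in $(I^F)_S$, as desired.

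The only genuine obstacle is the reverse inclusion, and within it the single trick of multiplying through by $a^{q-1}$, so that the spurious multiplier $a$ gets promoted to its $q$-th power and is thereby swallowed by $I^{[q]}$; everything else is bookkeeping about localized ideals and the equality $(I^{[q]})_S = (S^{-1}I)^{[q]}$. Finally, specializing to a Frobenius closed ideal $I$, so that $I^F = I$, gives $(S^{-1}I)^F = (I^F)_S = S^{-1}I$, which is exactly the assertion that localizations of Frobenius closed ideals are Frobenius closed.
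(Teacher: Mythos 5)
Your proof is correct, but it takes a genuinely different route from the paper. You argue by hand at the level of elements: the only nontrivial direction is $(S^{-1}I)^F \subseteq S^{-1}(I^F)$, where clearing denominators produces a spurious multiplier $a \in S$ with $av^q \in I^{[q]}$, and your trick $(av)^q = a^{q-1}(av^q) \in I^{[q]}$ absorbs it into the bracket power; together with the (correct, and standard) observation that ``$u^q \in I^{[q]}$ for one $q$'' already gives $u \in I^F$ — since Frobenius is a ring homomorphism, $u^q \in I^{[q]}$ implies $u^{qp} \in I^{[qp]}$ — this closes the argument. The paper instead works conceptually through the perfect closure $R^{\infty}$: it first identifies $I^F = IR^{\infty} \cap R$ (i.e., $\phi^{-1}(IR^{\infty})$ for $\phi: R \to R^{\infty}$), then uses exactness of localization and the isomorphism $S^{-1}R^{\infty} \cong (S^{-1}R)^{\infty}$ to get $S^{-1}(I^F) = (S^{-1}I)^F$ in one line. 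The two proofs are morally the same — in $R^{\infty}$ the multiplier $a$ acquires a $q$-th root, so your $a^{q-1}$ manoeuvre is what the contraction formula silently performs — but yours is more elementary and self-contained, needing no perfection formalism, while the paper's packages the combinatorics into a reusable identity ($J^F = JR^{\infty} \cap R$) that is convenient elsewhere. Both establish the same sharper equality, from which the ``in particular'' clause follows exactly as you say.
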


\begin{proof}
Let $J \subseteq R$ be an ideal. Then $u^q \in J^{[q]}$ for $q=p^e \gg 0$ if and only if $u \in JR^{\infty} \cap R$, where $R^{\infty}$ is the perfect closure of $R$; $R^{\infty}$ is the direct limit of $\{R \to R \to R \to \cdots\}$, where $R \to R$ is the Frobenius map. Let $\phi:R \to R^{\infty}$ be the natural ring map and write $JR^{\infty} \cap R$ for $\phi^{-1}(JR^{\infty}) \cap R$ for simplicity. Hence $J^F=JR^{\infty} \cap R$. Let $S \subseteq R$ be a multiplicative set. Since the localization functor is exact, we have
$$
S^{-1}R^{\infty} \cong (S^{-1}R)^{\infty}.
$$
Then we have
$$
S^{-1}(J^F)=S^{-1}(JR^{\infty} \cap R)=J(S^{-1}R^{\infty}) \cap S^{-1}R=(S^{-1}J)^F,
$$
as claimed.
\end{proof}

\subsection{Filter regular sequence}
Let us recall the definition of filter regular sequence. We always assume that a module is finitely generated over a ring.

\begin{definition}
Let $M$ be a finitely generated module over a local ring $(R,\fm,k)$ and let $x_1,\ldots,x_t$ be a set of elements of $R$. Then we say that $x_1,\ldots,x_t$ is a \textit{filter regular sequence} on $M$ if the following conditions hold:
\begin{enumerate}
\item
We have $(x_1,\ldots,x_t) \subseteq \fm$.

\item
We have $x_i \notin \frak p$ for all $\fp \in \Ass_R\Big(\dfrac{M}{(x_1,\ldots,x_{i-1})M}\Big) \setminus \{\fm\},~i=1,\ldots,t$.
\end{enumerate}
\end{definition}

We can deduce the existence of filter regular sequence, using the prime avoidance lemma. For this fact, we refer the reader to \cite[Remark 4.5]{Q13}. Note that the filter regular sequence that we just defined is also called an \textit{$\fm$-filter regular sequence} in other literatures.

\begin{lemma}
\label{filter}
Let $M$ be a finitely generated module over a local ring $(R,\fm,k)$. Then $x_1,\ldots, x_t \in \fm$ form a filter regular sequence on $M$ if and only if one of the following conditions holds:
\begin{enumerate}
\item
The quotient
$$
\dfrac{\big((x_1,\ldots,x_{i-1})M:_M x_i\big)}{(x_1,\ldots,x_{i-1})M}
$$
is an $R$-module of finite length for $i=1,\ldots,t$.

\item
Fix $i \in \mathbb{N}$ with $1 \le i \le t$. Then the sequence
$$
\frac{x_1}{1},\frac{x_2}{1},\ldots,\frac{x_i}{1}
$$
forms an $R_{\fp}$-regular sequence in $M_{\fp}$ for every $\fp \in \big(\Spec (R/(x_1,\ldots,x_i)) \cap \Supp_R M\big) \setminus \{\fm\}$.

\item
The sequence $x_1^{n_1},\ldots,x_t^{n_t}$ is a filter regular sequence for all $n_1, \ldots, n_t \ge 1$.
\end{enumerate}
\end{lemma}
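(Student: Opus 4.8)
The plan is to show that the defining conditions are each equivalent to (1), (2) and (3) in turn, moving freely between global statements and their localizations. Throughout I write $N_i := M/(x_1,\ldots,x_{i-1})M$, so that the quotient appearing in (1) is exactly the kernel $(0 :_{N_i} x_i)$ of multiplication by $x_i$ on $N_i$. First I would establish that the definition is equivalent to (1). The key observation is the elementary identity
$$
\Ass_R\big((0 :_{N_i} x_i)\big) = \{\fp \in \Ass_R N_i \mid x_i \in \fp\},
$$
which follows from the description of associated primes as annihilators of single elements. Since a finitely generated module over a local ring has finite length precisely when all of its associated primes equal $\fm$, the module $(0 :_{N_i} x_i)$ has finite length if and only if every associated prime $\fp \neq \fm$ of $N_i$ avoids $x_i$; this is exactly condition (2) of the definition for the index $i$. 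Ranging over $i=1,\ldots,t$ yields the equivalence.

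Next I would treat (1) $\Longleftrightarrow$ (2). A finitely generated module has finite length if and only if its support lies in $\{\fm\}$, i.e. if and only if all of its localizations at primes $\fp \neq \fm$ vanish; since localization is exact and commutes with forming $N_i$ and the colon, (1) is equivalent to: $x_i$ is a nonzerodivisor on $(N_i)_\fp = M_\fp/(x_1,\ldots,x_{i-1})M_\fp$ for every $\fp \neq \fm$ and every $i$. To match this with (2), note that when $\fp \supseteq (x_1,\ldots,x_i)$ and $\fp \in \Supp_R M$ one has $(x_1,\ldots,x_i)M_\fp \subseteq \fp M_\fp \subsetneq M_\fp$ by Nakayama, so properness of the sequence on $M_\fp$ is automatic and (2) merely asserts the successive nonzerodivisor conditions. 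Conversely, for a prime $\fp \neq \fm$ with $(N_i)_\fp \neq 0$: either $x_i \notin \fp$, in which case multiplication by $x_i$ is an isomorphism and there is nothing to prove, or $x_i \in \fp$, in which case a unit $x_j$ with $j < i$ would force $(x_1,\ldots,x_{i-1})M_\fp = M_\fp$ and hence $(N_i)_\fp = 0$; thus $x_1,\ldots,x_i \in \fp$ and (2) applies verbatim.

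Finally, for (3), the implication (3) $\Rightarrow$ definition is immediate on taking all exponents equal to $1$. For the converse I would apply the criterion (2) to the sequence of powers: since $\V(x_1^{n_1},\ldots,x_i^{n_i}) = \V(x_1,\ldots,x_i)$, the set of relevant primes is unchanged, and over each local ring $M_\fp$ one invokes the standard fact that $x_1^{n_1},\ldots,x_i^{n_i}$ is a regular sequence whenever $x_1,\ldots,x_i$ is. Hence (2) for $x_1,\ldots,x_t$ transfers to (2) for the powers, which is precisely (3).

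I expect the main obstacle to be the careful bookkeeping in the $(2)\Rightarrow(1)$ direction, where one must correctly dispose of primes $\fp \neq \fm$ at which $(N_i)_\fp \neq 0$ but which do not a priori contain all of $x_1,\ldots,x_i$: the point is to argue that such primes either make $x_i$ a unit or, when $x_i\in\fp$, already contain the whole ideal, so that the localized regular-sequence hypothesis can be invoked. Everything else is an assembly of standard facts about associated primes, finite length, and regular sequences under localization.
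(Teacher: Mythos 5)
Your proof is correct, but note that the paper contains no argument of its own for this lemma: its ``proof'' is a one-line citation to Nagel--Schenzel \cite[Proposition 2.2]{NS95}. So what you have produced is a self-contained verification of a fact the authors outsource, and your route is the standard one. The three pivots are all sound: (i) the identity $\Ass_R(0:_{N_i}x_i)=\Ass_R(N_i)\cap \V(x_i)$ combined with ``a finitely generated module has finite length iff $\Ass\subseteq\{\fm\}$'' gives the equivalence of the definition with (1); (ii) the characterization of finite length by vanishing of all localizations at $\fp\neq\fm$, together with exactness of localization, converts (1) into successive nonzerodivisor conditions, and your bookkeeping for (2)$\Rightarrow$(1) is exactly where care is needed and you handle it correctly --- if $(N_i)_\fp\neq 0$ then $\fp\in\Supp_R M\cap \V(x_1,\ldots,x_{i-1})$ (equivalently, your unit argument), so either $x_i\notin\fp$ and multiplication by $x_i$ is bijective on $(N_i)_\fp$, or $\fp\supseteq(x_1,\ldots,x_i)$ and hypothesis (2) applies; the Nakayama remark correctly disposes of the properness clause in the definition of a regular sequence on $M_\fp$; (iii) for (3), the observation $\V(x_1^{n_1},\ldots,x_i^{n_i})=\V(x_1,\ldots,x_i)$ plus the classical fact that powers of a regular sequence remain a regular sequence transfers condition (2) verbatim to the powered sequence, and the converse is the specialization $n_1=\cdots=n_t=1$. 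I see no gap; the one implicit ingredient you should perhaps make explicit is that localization commutes with the colon module, i.e.\ $\big((x_1,\ldots,x_{i-1})M:_M x_i\big)_\fp=\big((x_1,\ldots,x_{i-1})M_\fp:_{M_\fp}x_i\big)$, which holds because $M$ is finitely generated over a Noetherian ring.
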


\begin{proof}
The proof is found in \cite[Proposition 2.2]{NS95}.
\end{proof}

The following result is very useful in this paper (cf. \cite[Proposition 3.4]{NS95}).

\begin{lemma}[Nagel-Schenzel isomorphism]
\label{nagel-shenzel}
Let $(R,\fm,k)$ be a local ring and let $M$ be a finitely generated $R$-module. Let $x_1,\ldots,x_t$ be a filter regular sequence on $M$. Then we have
\[
H^i_{\fm}(M) \cong
\begin{cases}
H^i_{(x_1,\ldots, x_t)}(M) &\text{ if } i<t\\
H^{i-t}_{\fm}(H^t_{(x_1, \ldots, x_t)}(M)) &\text{ if } i \ge t.
\end{cases}
\]
\end{lemma}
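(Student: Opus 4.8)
The plan is to obtain both cases of the isomorphism at once through the Grothendieck spectral sequence attached to the composite $\Gamma_{\fm} = \Gamma_{\fm} \circ \Gamma_{\fa}$, where I write $\fa := (x_1,\ldots,x_t)$. Since $\fa \subseteq \fm$ one checks directly that $\Gamma_{\fm}(\Gamma_{\fa}(M)) = \Gamma_{\fm}(M)$ (because $\Gamma_{\fm}(M) \subseteq \Gamma_{\fa}(M)$), and as $R$ is Noetherian the functor $\Gamma_{\fa}$ carries injective $R$-modules to injective, hence $\Gamma_{\fm}$-acyclic, modules. The composite-functor condition is therefore satisfied and there is a convergent first-quadrant spectral sequence
\[
E_2^{p,q} = H^p_{\fm}\big(H^q_{\fa}(M)\big) \Longrightarrow H^{p+q}_{\fm}(M).
\]
All of the work then goes into controlling the columns $E_2^{\bullet,q}$, after which the two ranges $i<t$ and $i \ge t$ will be read off from an elementary inspection of differentials.

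First I would record the two structural facts about $H^q_{\fa}(M)$ that make the filter regular hypothesis bite. Since $\fa$ is generated by $t$ elements, the \v{C}ech complex computing $H^\bullet_{\fa}(M)$ has length $t$, so $H^q_{\fa}(M) = 0$ for every $q>t$; this wipes out all columns to the right of $q=t$. The decisive point is that $H^q_{\fa}(M)$ is $\fm$-torsion (equivalently, supported only at $\fm$) for every $q<t$. To see this I would localize at a prime $\fp \neq \fm$: if $\fa \not\subseteq \fp$ then $\fa R_{\fp} = R_{\fp}$ and $H^q_{\fa}(M)_{\fp}=0$ for all $q$, while if $\fa \subseteq \fp$ and $\fp \in \Supp M$ then Lemma \ref{filter}(2) guarantees that $x_1/1,\ldots,x_t/1$ is an $M_{\fp}$-regular sequence, so $\grade(\fa R_{\fp}, M_{\fp}) \ge t$ and $H^q_{\fa}(M)_{\fp} = H^q_{\fa R_{\fp}}(M_{\fp}) = 0$ for $q<t$. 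Hence $\Supp H^q_{\fa}(M) \subseteq \{\fm\}$, and a finitely generated submodule argument shows that such a module is $\fm$-torsion.

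With these facts in hand the spectral sequence nearly degenerates. For $q<t$ the module $H^q_{\fa}(M)$ is $\fm$-torsion, so $E_2^{p,q}=0$ for $p>0$ while $E_2^{0,q}=H^q_{\fa}(M)$; thus in the range $q<t$ only the bottom row survives. Combined with the vanishing for $q>t$, the only nonzero entries sit in the bottom row of the columns $q<t$ together with the single column $q=t$. A check of the differentials $d_r\colon E_r^{p,q}\to E_r^{p+r,q-r+1}$ shows that none of them can be nonzero on these surviving entries: every target or source of such a differential lands in a region already shown to vanish (a left column with $p>0$, a negative column, or a column $>t$), so $E_2 = E_\infty$ on the relevant spots. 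Reading off total degree $n$: for $n<t$ the only contribution is $E_\infty^{0,n} = H^n_{\fa}(M)$, giving $H^n_{\fm}(M) \cong H^n_{\fa}(M)$; for $n \ge t$ the only contribution is $E_\infty^{n-t,t} = H^{n-t}_{\fm}(H^t_{\fa}(M))$, giving the second case.

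I expect the genuine content, and thus the main obstacle, to be the claim that $H^q_{\fa}(M)$ is $\fm$-torsion for $q<t$, since this is the only step that uses the filter regular hypothesis essentially (through the regular-sequence-after-localization criterion of Lemma \ref{filter}); everything else is the formal machinery of the Grothendieck spectral sequence together with a routine differential chase in the first quadrant. An alternative, spectral-sequence-free route would induct on $t$ using the long exact sequence attached to multiplication by $x_t$ on the relevant local cohomology, but the bookkeeping there is heavier, so I would favour the composite-functor approach above.
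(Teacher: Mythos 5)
Your proof is correct, but note that the paper offers no proof of this lemma at all: it simply cites Nagel--Schenzel \cite[Proposition 3.4]{NS95}, so there is no in-paper argument to compare against. Your composite-functor route is the standard self-contained proof and every step checks out: the identity $\Gamma_{\fm}=\Gamma_{\fm}\circ\Gamma_{\fa}$ holds because $\Gamma_{\fm}(M)\subseteq\Gamma_{\fa}(M)$; over a Noetherian ring $\Gamma_{\fa}$ preserves injectives (decompose $E$ into indecomposables $E(R/\fp)$, on which $\Gamma_{\fa}$ acts as identity or zero), so the Grothendieck spectral sequence $E_2^{p,q}=H^p_{\fm}(H^q_{\fa}(M))\Rightarrow H^{p+q}_{\fm}(M)$ is available; the vanishing $H^q_{\fa}(M)=0$ for $q>t$ is clear from the \v{C}ech complex; and your identification of $\Supp H^q_{\fa}(M)\subseteq\{\fm\}$ for $q<t$ is exactly where the filter regular hypothesis enters, correctly via part (2) of the paper's Lemma \ref{filter}. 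Two small points you should make explicit: first, in the localization argument you must also dispose of primes $\fp\supseteq\fa$ with $\fp\notin\Supp_R M$ (trivial, since then $M_{\fp}=0$, but the case split as written omits it); second, you use silently that an $\fm$-torsion module is $\Gamma_{\fm}$-acyclic, which is what converts ``$E_2^{p,q}=0$ for $p>0$, $q<t$'' from the support computation --- this is standard (such a module is a direct limit of finite-length modules, and $\Gamma_{\fm}$ fixes it) but deserves a sentence. The degeneration analysis is right: every differential out of $(0,q)$, $q<t$, lands in a column $q-r+1<t$ at horizontal position $r>0$, and every differential touching the column $q=t$ has source in the vanishing region $q>t$ or target in the region $(p+r,t-r+1)$ with $p+r>0$ and $t-r+1<t$, so $E_2=E_\infty$ and each antidiagonal carries a single graded piece, yielding both cases at once. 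Compared with the inductive \v{C}ech-complex proof in the cited source, your argument buys uniformity (both ranges from one spectral sequence) at the cost of the homological machinery; either is a legitimate proof of the lemma.
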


The extremely useful case of Nagel-Schenzel's isomorphism is that we can consider $H^t_{\fm}(R)$ as the submodule $H^{0}_{\fm}(H^t_{(x_1, \ldots, x_t)}(M))$ of $H^t_{(x_1, \ldots, x_t)}(M)$. These module structures are compatible with the Frobenius actions (in positive characteristic). Moreover, $H^t_{(x_1, \ldots, x_t)}(M)$ is the top local cohomology module whose Frobenius action is described explicitly.

\subsection{Frobenius closed parameter ideals I}
Now we prove the following theorem.

\begin{theorem}
\label{T1.2}
Let $(R,\fm,k)$ be a local ring of characteristic $p>0$. Set $d=\dim R$. Then we have the following results:
\begin{enumerate}
\item
Assume that $x_1,\ldots,x_t$ is a filter regular sequence on $R$ such that $(x^{p^n}_1,\ldots,x^{p^n}_t)$ is Frobenius closed for all $n \ge 0$. Then the Frobenius action on $H^t_{\fm}(R)$ is injective.

\item
If every parameter ideal of $R$ is Frobenius closed, then $R$ is $F$-injective.
\end{enumerate}
\end{theorem}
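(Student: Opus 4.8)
The plan is to prove part (1) first and then to deduce part (2) from it. For (1), since $x_1,\dots,x_t$ is a filter regular sequence, the Nagel--Schenzel isomorphism (Lemma \ref{nagel-shenzel}) realizes $H^t_{\fm}(R)$ as the submodule $H^0_{\fm}\big(H^t_{(x_1,\dots,x_t)}(R)\big)$ of the top local cohomology $H^t_{(x_1,\dots,x_t)}(R)$, and the remark following that lemma guarantees this is compatible with the Frobenius action. Hence it suffices to prove that $F_*$ is injective on all of $H^t_{(x_1,\dots,x_t)}(R)$, and for this I would use the explicit description $H^t_{(x_1,\dots,x_t)}(R)\cong\varinjlim_n R/(x_1^n,\dots,x_t^n)$ with transition maps given by multiplication by $x_1\cdots x_t$, under which $F_*$ sends the class of $a$ at level $n$ to the class of $a^p$ at level $pn$.

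The key device is to compute this limit along the cofinal subsystem of levels that are powers of $p$. Represent a class $\overline a$ at level $q_0=p^{j_0}$ by an element $a$, and suppose $F_*(\overline a)=0$; then $a^p$ dies at some level $p^{j_1}$ with $j_1\ge j_0+1$, which unwound means $(x_1\cdots x_t)^{p^{j_1}-p^{j_0+1}}a^p\in(x_1^{p^{j_1}},\dots,x_t^{p^{j_1}})$. The point is that the exponent $p^{j_1}-p^{j_0+1}$ is divisible by $p$, so writing it as $p\,s'$ with $s'=p^{j_1-1}-p^{j_0}\ge 0$, the left side becomes $\big((x_1\cdots x_t)^{s'}a\big)^p$ while the right side is $(x_1^{p^{j_1-1}},\dots,x_t^{p^{j_1-1}})^{[p]}$. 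Thus $b:=(x_1\cdots x_t)^{s'}a$ satisfies $b^p\in J^{[p]}$ for $J=(x_1^{p^{j_1-1}},\dots,x_t^{p^{j_1-1}})$, so $b\in J^F$; by hypothesis $J$ is Frobenius closed, whence $b\in J$. Reading this back through the direct system says exactly that $\overline a$ already dies at level $p^{j_1-1}$, i.e. $\overline a=0$. This proves (1).

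For (2), I would first record the auxiliary fact that, assuming every parameter ideal is Frobenius closed, the ideal $(x_1^{a_1},\dots,x_t^{a_t})$ is Frobenius closed whenever $x_1,\dots,x_t$ is part of a system of parameters $x_1,\dots,x_d$. Indeed, if $u^q\in(x_1^{a_1},\dots,x_t^{a_t})^{[q]}$, then $u$ lies in the Frobenius closure of each parameter ideal $(x_1^{a_1},\dots,x_t^{a_t},x_{t+1}^M,\dots,x_d^M)$, hence in the ideal itself, and letting $M\to\infty$ Krull's intersection theorem gives $u\in(x_1^{a_1},\dots,x_t^{a_t})$. Now for each $1\le t\le d$ I would choose a filter regular system of parameters $x_1,\dots,x_d$; its truncation $x_1,\dots,x_t$ is again filter regular, and by the auxiliary fact $(x_1^{p^n},\dots,x_t^{p^n})$ is Frobenius closed for all $n\ge 0$, so part (1) shows $F_*$ is injective on $H^t_{\fm}(R)$. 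The remaining case $H^0_{\fm}(R)$ is handled by noting that the hypothesis forces $R$ to be reduced (Lemma \ref{reduced} applied to the powers of a single parameter), which settles $F$-injectivity in degree $0$; combining all degrees gives that $R$ is $F$-injective.

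I expect the main obstacle to be the bookkeeping in part (1): one must arrange the representatives at levels that are powers of $p$ so that, after clearing the transition factor $x_1\cdots x_t$, both sides of the containment become exact $p$-th powers of the ideal $J$ appearing in the Frobenius closure hypothesis. The divisibility $p\mid(p^{j_1}-p^{j_0+1})$ is precisely what makes this possible, and aligning these exponents with the hypothesis --- which only supplies Frobenius closedness at exponents that are powers of $p$ --- is the delicate point of the argument.
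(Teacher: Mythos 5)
Your proof is correct and follows essentially the same route as the paper: you reduce via the Nagel--Schenzel isomorphism to the Frobenius action on $H^t_{(x_1,\ldots,x_t)}(R)$ presented as a direct limit, and your exponent bookkeeping with $b=(x_1\cdots x_t)^{s'}a$ is precisely the element-level unwinding of the paper's observation that Frobenius closedness of each $I^{[p^n]}$ makes every vertical map $R/I^{[p^n]}\xrightarrow{F} R/I^{[p^{n+1}]}$ in its commutative ladder injective, so the limit map is injective. For part (2), your auxiliary Krull-intersection fact is exactly the result the paper cites as \cite[Lemma 3.1]{M15}, and your explicit treatment of degree $0$ via reducedness matches what the paper leaves implicit.
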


\begin{proof}
(1): Set $I=(x_1,\ldots,x_t)$. Then $I^{[p^n]}$ is Frobenius closed by assumption. Then we have the following commutative diagram:
$$
\begin{CD}
R/I @>>>  R/I^{[p]}  @>>> R/I^{[p^2]} @>>> \cdots \\
@VFVV @VFVV @VFVV \\
R/I^{[p]} @>>> R/I^{[p^2]} @>>> R/I^{[p^3]} @>>> \cdots
\end{CD}
$$
where each vertical map is the Frobenius and each map in the horizontal direction is multiplication map by $(x_1 \cdots x_t)^{p^e-p^{e-1}}$ in the corresponding spot. The direct limits of both lines are $H^t_I(R)$ and the vertical map is exactly the Frobenius action on $H^t_I(R)$. Since $I^{[p^n]}$ is Frobenius closed, each vertical map is injective. Hence the direct limit map is injective. Therefore, the Frobenius acts injectively on $H^t_I(R)$. To show that Frobenius acts injectively on $H^t_{\fm}(R)$, we need Nagel-Schenzel isomorphism:
$$
H^t_{\fm}(R) \cong H^0_{\fm}\big(H^t_I(R)\big).
$$
Thus, the Frobenius action on $H^t_{\fm}(R)$ is the direct limit of the following direct system
$$
\begin{CD}
H^0_{\fm}(R/I) @>>> H^0_{\fm}(R/I^{[p]}) @>>> H^0_{\fm}(R/I^{[p^2]}) @>>> \cdots \\
@VFVV @VFVV @VFVV \\
H^0_{\fm}(R/I^{[p]}) @>>> H^0_{\fm}(R/I^{[p^2]}) @>>> H^0_{\fm}(R/I^{[p^3]}) @>>> \cdots \\
\end{CD}
$$
proving that $R$ is $F$-injective, as claimed.

(2): There exists a filter regular sequence $x_1,\ldots,x_d$ that is a system of parameters of $R$ by prime avoidance lemma. Let $I=(x_1,\ldots,x_t)$. Then by \cite[Lemma 3.1]{M15}, the ideals $I$ and $I^{[p^n]}$ are both Frobenius closed for $0 \le t \le d$ and $n \ge 0$. Using this together with the above discussions, we conclude that Frobenius acts injectively on $H^t_{\fm}(R)$.
\end{proof}

\begin{remark}
Let $(R,\fm,k)$ be a local ring of characteristic $p>0$ with a regular element $x \in \fm$. Assume that every parameter ideal of $R/xR$ is Frobenius closed. Then we claim that the Frobenius action on $H^t_{\fm}(R)$ is injective, where $t=\depth R$. Indeed, $R/xR$ is $F$-injective by Theorem \ref{T1.2} and the claim follows by \cite[Lemma A1]{HMS14}. We will slightly generalize this statement as Corollary \ref{Frobeniuslocal}.
\end{remark}

\begin{corollary}\label{C1.9}
Let $(R,\fm,k)$ be a Cohen-Macaulay local ring of characteristic $p>0$. Then the following are equivalent:
\begin{enumerate}
\item
Every parameter ideal of $R$ is Frobenius closed.

\item
There is a parameter ideal of $R$ that is Frobenius closed.

\item
$R$ is $F$-injective.
\end{enumerate}
\end{corollary}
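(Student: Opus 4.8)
The plan is to prove the cycle of implications $(1) \Rightarrow (2) \Rightarrow (3) \Rightarrow (1)$. The implication $(1) \Rightarrow (2)$ is immediate, since a system of parameters exists and the hypothesis of $(1)$ makes the resulting parameter ideal Frobenius closed.

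For $(2) \Rightarrow (3)$, which carries the real content, I would start from a Frobenius closed parameter ideal $\fq = (x_1, \ldots, x_d)$. Because $R$ is Cohen-Macaulay, the system of parameters $x_1, \ldots, x_d$ is a regular sequence, and since these elements lie in $\fm$ the sequence is regular in any order. Hence Lemma \ref{L1.6} applies and gives that $\fq^{[p^n]} = (x_1^{p^n}, \ldots, x_d^{p^n})$ is Frobenius closed for every $n \ge 0$. A regular sequence contained in $\fm$ is in particular a filter regular sequence on $R$, so Theorem \ref{T1.2}(1) with $t = d$ yields that the Frobenius action on $H^d_{\fm}(R)$ is injective. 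As $R$ is Cohen-Macaulay of dimension $d$, the module $H^d_{\fm}(R)$ is the only nonvanishing local cohomology module, and therefore $R$ is $F$-injective.

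For $(3) \Rightarrow (1)$, the classical converse, let $\fq = (x_1, \ldots, x_d)$ be an arbitrary parameter ideal. The regular sequence property guarantees that the canonical map $R/\fq \to H^d_{\fm}(R)$ is injective; equivalently $\fq^{\lim} = \fq$, which is standard for ideals generated by a regular sequence (each transition map in the direct system $\varinjlim_n R/(x_1^n, \ldots, x_d^n)$ is injective because $(x_1^{n+1}, \ldots, x_d^{n+1}) :_R (x_1 \cdots x_d)^n = (x_1, \ldots, x_d)$). Now take $u \in \fq^F$, so that $u^{p^e} \in \fq^{[p^e]}$ for some $e$. Let $\eta \in H^d_{\fm}(R)$ be the image of $u + \fq$. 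By the explicit description of the Frobenius action on the top local cohomology recalled in \S\ref{sec1}, the iterate $F^e$ sends $\eta$ to the image of $u^{p^e} + \fq^{[p^e]}$, which is zero since $u^{p^e} \in \fq^{[p^e]}$. By $F$-injectivity the action of $F^e$ on $H^d_{\fm}(R)$ is injective, so $\eta = 0$, and the injectivity of $R/\fq \hookrightarrow H^d_{\fm}(R)$ then forces $u \in \fq$. Thus $\fq^F = \fq$.

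The main obstacle is really the single substantive step $(2) \Rightarrow (3)$: reducing from one Frobenius closed parameter ideal to full $F$-injectivity hinges on first propagating Frobenius closedness to every Frobenius power $\fq^{[p^n]}$ via Lemma \ref{L1.6}, and then feeding this into the direct-limit injectivity argument of Theorem \ref{T1.2}(1). The Cohen-Macaulay hypothesis is used essentially and twice: to make the system of parameters a regular sequence (so that Lemma \ref{L1.6} applies and the canonical maps are injective), and to collapse $F$-injectivity to the single top local cohomology module $H^d_{\fm}(R)$.
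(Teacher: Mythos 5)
Your proposal is correct and follows essentially the same route as the paper: the paper likewise treats $(1)\Rightarrow(2)$ as trivial, proves $(2)\Rightarrow(3)$ by combining Lemma \ref{L1.6} with Theorem \ref{T1.2}, and settles $(3)\Rightarrow(1)$ by citing \cite[Lemma 10.3.20]{BH98}. The only difference is that you prove that classical implication directly, via the injectivity of $R/\fq \to H^d_{\fm}(R)$ coming from the regular sequence colon computation, which is exactly the standard argument behind the cited lemma.
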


\begin{proof}
(1) $\Rightarrow$ (2) is trivial, (2) $\Rightarrow$ (3) by Theorem \ref{T1.2} and Lemma \ref{L1.6} and (3) $\Rightarrow$ (1) by \cite[Lemma 10.3.20]{BH98}.
\end{proof}

\begin{remark}
\label{dualizing}
Let $(R,\fm,k)$ be a reduced $F$-finite local ring. Then by a theorem of Kunz \cite{Ku76}, $R$ is an excellent ring. Hence the $\fm$-adic completion $\widehat{R}$ is also reduced and $F$-finite. Since $R$ is $F$-finite, it is known that it is a homomorphic image of a regular local ring by Gabber \cite[Remark 13.6]{G05}. Hence $R$ admits a dualizing complex.
\end{remark}

In order to reduce a certain problem on a complete local ring of characteristic $p>0$ with an arbitrary residue field to the problem on an $F$-finite local ring, the ``$\Gamma$-construction'', due to Hochster and Huneke \cite{HH94}, will be useful. We briefly recall the construction. Let $(R,\fm,k)$ be a complete local ring with coefficient field $k$ of characteristic $p>0$ and of dimension $d$. Let us fix a $p$-basis $\Lambda$ of $k \subset R$ and let $\Gamma \subset \Lambda$ be a cofinite subset (we refer the reader to \cite{Ma86} for the definition of a $p$-basis). We denote by $k_e$ (or $k_{\Gamma,e}$ to signify the dependence on the choice of $\Gamma$) the purely inseparable  extension field $k[\Gamma^{1/p^e}]$ of $k$, which is obtained by adjoining $p^e$-th roots of all elements in $\Gamma$. Next, fix a system of parameters $x_1,\ldots,x_d$ of $R$. Then the natural map $A:=k[[x_1,\ldots,x_d]] \to R$ is module-finite. Let us put
$$
A^{\Gamma}:=\bigcup_{e>0} k_e[[x_1,\ldots,x_d]].
$$
Then the natural map $A \to A^{\Gamma}$ is faithfully flat and purely inseparable and $\fm_A A^{\Gamma}$ is the unique maximal ideal of $A^{\Gamma}$. Now we set $R^{\Gamma}:=A^{\Gamma} \otimes_A R$. Then $R \to R^{\Gamma}$ is faithfully flat and purely inseparable. The crucial fact about $R^{\Gamma}$ is that it is an $F$-finite local ring (see \cite[(6.6) Lemma]{HH94}).

\begin{lemma}
\label{L1.11}
Let $(R,\fm,k)$ be an $F$-injective local ring. Then $R$ is a reduced ring. If furthermore $R$ is $F$-finite, then $R_{\fp}$ is $F$-injective for all $\fp \in \Spec R$.
\end{lemma}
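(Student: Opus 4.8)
The plan is to prove the two assertions together, but to establish the $F$-finite localization statement first, since it will feed back into the proof of reducedness. Throughout I would lean on one elementary computation: on $H^0_{\fm}(R)=\Gamma_{\fm}(R)$ the Frobenius action is literally the $p$-th power map, so $F$-injectivity forces $\Gamma_{\fm}(R)$ to contain no nonzero nilpotent, i.e. $\operatorname{nil}(R)\cap\Gamma_{\fm}(R)=0$ (if $a\in\operatorname{nil}(R)\cap\Gamma_{\fm}(R)$ then $a^{p^e}=0$ for some $e$, and iterated injectivity on $H^0_{\fm}(R)$ gives $a=0$).

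For the localization statement, assume $R$ is $F$-finite. Then the Frobenius $R\to F_*R$ is a map of \emph{finite} $R$-modules, so the natural route is local duality. By Remark \ref{dualizing} the ring $R$ is excellent and admits a dualizing complex $\omega_R^{\bullet}$, and Matlis duality identifies the Frobenius action on the Artinian modules $H^i_{\fm}(R)$ with a trace map $t^i\colon \Ext^{-i}_R(F_*R,\omega_R^{\bullet})\to H^{-i}(\omega_R^{\bullet})$ between finitely generated modules, in such a way that $F$-injectivity is equivalent to surjectivity of every $t^i$. The decisive feature is that surjectivity of a map of finite modules is preserved under localization; since localizing commutes with $F_*$ (as $(F_*R)_{\fp}=F_*(R_{\fp})$ by $F$-finiteness) and with the dualizing complex up to a degree shift, the localized map $(t^i)_{\fp}$ is again the corresponding trace for $R_{\fp}$. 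Hence all traces for $R_{\fp}$ are surjective and $R_{\fp}$ is $F$-injective.

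For reducedness I would first treat the $F$-finite case by induction on $d=\dim R$. When $d=0$ we have $H^0_{\fm}(R)=R$ and the injectivity of the $p$-th power map says exactly that $R$ is reduced. For $d\ge 1$, every $R_{\fp}$ with $\fp\ne\fm$ is $F$-finite and $F$-injective by the previous paragraph and has strictly smaller dimension, hence is reduced by induction; therefore $\operatorname{nil}(R)$ is supported only at $\fm$, so $\operatorname{nil}(R)\subseteq\Gamma_{\fm}(R)$. Together with $\operatorname{nil}(R)\cap\Gamma_{\fm}(R)=0$ this forces $\operatorname{nil}(R)=0$. For a general $F$-injective local ring $R$, I would reduce to the $F$-finite case: replacing $R$ by its completion changes neither the local cohomology nor the Frobenius action, and reducedness of $\widehat R$ implies that of $R$ since $R\hookrightarrow\widehat R$; then the $\Gamma$-construction recalled above produces a faithfully flat, purely inseparable, $F$-finite local extension $R^{\Gamma}$, which remains $F$-injective, and the $F$-finite case shows $R^{\Gamma}$ is reduced, whence $R$ is reduced by faithful flatness.

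The hardest part will be the duality bookkeeping in the second paragraph: checking that the Frobenius action as defined in the text is genuinely Matlis dual to the Grothendieck trace, that $F$-injectivity corresponds to surjectivity degree by degree, and, most delicately, that localization is compatible with the dualizing complex via the shift $\omega_{R_{\fp}}^{\bullet}\simeq(\omega_R^{\bullet})_{\fp}[\dim R/\fp]$, so that $(t^i)_{\fp}$ really is the trace governing $R_{\fp}$. A secondary point requiring justification is that the $\Gamma$-construction preserves $F$-injectivity, which should follow from faithfully flat base change for local cohomology together with the compatibility of the construction with the Frobenius endomorphism.
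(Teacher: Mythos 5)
Your proof is correct, and its skeleton coincides with the paper's: the paper also reduces reducedness to the $F$-finite case through the chain $R \to \widehat{R}^{\Gamma} \to \widehat{\widehat{R}^{\Gamma}}$ and then settles both $F$-finite assertions by citation, quoting \cite[Remark 2.6]{SchZ13} for reducedness and \cite[Proposition 4.3]{Sch09} for localization of $F$-injectivity. Where you differ is that you actually prove the two cited facts. Your local duality argument for localization is precisely the proof behind \cite[Proposition 4.3]{Sch09}, and the paper itself carries out the same computation in the proposition immediately following Lemma \ref{L1.11} (openness of the $F$-injective locus), where $F$-injectivity of $R_{\fp}$ is translated into surjectivity of $\Ext^{n-\dim R/\fp-i}_{S_P}(R^{1/p}_{\fp},S_P)\to \Ext^{n-\dim R/\fp-i}_{S_P}(R_{\fp},S_P)$ and surjectivity of maps of finite modules is localized. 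Your worry about the exact shift $\omega^{\bullet}_{R_{\fp}}\simeq(\omega^{\bullet}_R)_{\fp}[\pm\dim R/\fp]$ is harmless: dualizing complexes over a local ring are unique up to shift, and since you need surjectivity of the trace in \emph{all} degrees, the value of the shift never enters. Your induction on dimension for reducedness (localizations at $\fp\neq\fm$ are $F$-finite, $F$-injective of smaller dimension, hence reduced, so $\mathrm{nil}(R)\subseteq \Gamma_{\fm}(R)$, which meets $\mathrm{nil}(R)$ trivially by iterated injectivity on $H^0_{\fm}(R)$) is a clean, self-contained replacement for \cite[Remark 2.6]{SchZ13}. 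A good structural point in your favor: by phrasing the Frobenius as $R\to F_*R$ rather than $R\hookrightarrow R^{1/p}$, the localization step does not presuppose reducedness, so there is no circularity between your two inductions.

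The one step that would not survive scrutiny as sketched is the claim that the $\Gamma$-construction preserves $F$-injectivity ``by faithfully flat base change for local cohomology together with compatibility with Frobenius.'' Flat base change does give $H^i_{\fm}(R^{\Gamma})\cong H^i_{\fm}(R)\otimes_R R^{\Gamma}$, but the Frobenius action on the left sends $\eta\otimes s$ to $F(\eta)\otimes s^p$; it is not the base change of the $R$-linear map $H^i_{\fm}(R)\to F_*H^i_{\fm}(R)$, and its injectivity hinges on the relative Frobenius $F_*R\otimes_R R^{\Gamma}\to F_*(R^{\Gamma})$, which is not injective for formal reasons. This is exactly why \cite[Lemma 2.9]{EH08} requires $\Gamma$ to be a \emph{sufficiently small} cofinite subset of the $p$-basis, and it is what the paper invokes; you should cite that lemma rather than attempt the base-change argument. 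With that citation in place (and completing first, which is fine since $H^i_{\fm}(R)\cong H^i_{\fm\widehat{R}}(\widehat{R})$ compatibly with the Frobenius action), your reduction agrees with the paper's.
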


\begin{proof}
We have a faithfully flat extension:
$$
R \to \widehat{R}^{\Gamma} \to S:=\widehat{\widehat{R}^{\Gamma}},
$$
where $\widehat{R}$ is the completion of $R$ and $\widehat{R}^{\Gamma}$ is its $\Gamma$-construction. Moreover,
$\widehat{R}^{\Gamma}$ is $F$-finite and $F$-injective for all sufficiently small choice of $\Gamma$ by \cite[Lemma 2.9]{EH08}. If we can show that $S$ is reduced, then $R$ is also reduced. So we may henceforth assume that $R$ is $F$-finite and $F$-injective. Note that $R$ has a dualizing complex by Remark \ref{dualizing}. Then the fact that $R$ is reduced is found in \cite[Remark 2.6]{SchZ13} and the fact that $R_{\fp}$ is $F$-injective is found in \cite[Proposition 4.3]{Sch09}.
\end{proof}

The following is of independent interest.

\begin{proposition}
Let $(R,\fm,k)$ be a reduced $F$-finite local ring and let
$$
U:=\{\fp \in \Spec R~|~R_{\fp}~\mbox{is}~F\mbox{-injective}\}.
$$
Then $U$ is a Zariski open subset of $\Spec R$.
\end{proposition}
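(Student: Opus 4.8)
The plan is to realize $U$ as the complement of a finite union of closed subsets, by encoding the $F$-injectivity of each localization $R_{\fp}$ as the vanishing at $\fp$ of finitely many finitely generated cokernel modules. The starting point is that, by Remark \ref{dualizing}, the reduced $F$-finite local ring $R$ admits a normalized dualizing complex $\omega_R^\bullet$, and that $F$-finiteness makes the Frobenius $F\colon R \to F_*R \cong R^{1/p}$ a module-finite map (here reducedness guarantees $F_*R \cong R^{1/p}$).

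First I would apply Grothendieck duality $\mathbf{R}\Hom_R(-,\omega_R^\bullet)$ to the map $F$, obtaining the Grothendieck trace of Frobenius $\Tr\colon F_*\omega_R^\bullet \to \omega_R^\bullet$. Passing to cohomology produces, for each integer $j$, a map of finitely generated $R$-modules $\tau_j\colon h^j(F_*\omega_R^\bullet) \to h^j(\omega_R^\bullet)$, and only finitely many of these are nonzero since $\omega_R^\bullet$ has bounded cohomology. Set $C_j := \coker(\tau_j)$, again a finitely generated $R$-module.

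The key local input, which I would cite from \cite{Sch09} (the same source used for Lemma \ref{L1.11}), is the duality characterization: for an $F$-finite local ring with dualizing complex, $F$-injectivity is equivalent, via Matlis and local duality, to the surjectivity of every trace map $\tau_j$ on cohomology. I would then apply this to each localization $R_{\fp}$. Since the formation of the dualizing complex, of $F_*$, and of the Grothendieck trace all commute with the flat localization map $R \to R_{\fp}$ --- up to the shift $[\,\dim R/\fp\,]$ that renormalizes $(\omega_R^\bullet)_{\fp}$ into a dualizing complex of $R_{\fp}$ --- the localized maps $(\tau_j)_{\fp}$ are, after this harmless reindexing, precisely the trace maps of $R_{\fp}$. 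Because localization is exact, $(C_j)_{\fp} \cong \coker\big((\tau_j)_{\fp}\big)$, whence
\[
R_{\fp}\ \text{is}\ F\text{-injective} \iff (\tau_j)_{\fp}\ \text{is surjective for all}\ j \iff (C_j)_{\fp} = 0\ \text{for all}\ j.
\]
The shift depends on $\fp$, but this is irrelevant: it only reindexes which $\tau_j$ controls which $H^i_{\fp R_{\fp}}(R_{\fp})$, and we are quantifying over all indices.

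Finally I would conclude by the support argument. From the equivalence above,
\[
U = \{\fp \in \Spec R : (C_j)_{\fp} = 0\ \text{for all}\ j\} = \Spec R \setminus \bigcup_j \Supp_R(C_j).
\]
Each $\Supp_R(C_j) = \V(\Ann_R C_j)$ is Zariski closed and the union is finite, so $U$ is open. The main obstacle is the technical verification that the global Grothendieck trace localizes to the Matlis dual of the Frobenius action on $R_{\fp}$, while keeping track of the normalization shift of the dualizing complex; this is where compatibility of Grothendieck duality with flat base change, together with Schwede's local characterization, does the real work. (Alternatively, one could invoke Grothendieck's openness criterion: stability of $U$ under generization is immediate from Lemma \ref{L1.11}, but establishing constructibility appears to require essentially the same duality input, so the support description above is the cleaner route.)
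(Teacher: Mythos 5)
Your proposal is correct and takes essentially the same route as the paper: the paper presents $R$ as a quotient of a regular local ring $S$ of dimension $n$, uses local duality to convert injectivity of the Frobenius on $H^i_{\fp R_{\fp}}(R_{\fp})$ into surjectivity of $\Ext^{n-\dim R/\fp-i}_{S_P}(R^{1/p}_{\fp},S_P)\to \Ext^{n-\dim R/\fp-i}_{S_P}(R_{\fp},S_P)$, and exhibits $U$ as the complement of the finite union of the closed sets $\Supp(C_i)$ with $C_i=\coker\big(\Ext^{n-i}_{S}(R^{1/p},S)\to \Ext^{n-i}_{S}(R,S)\big)$. Your dualizing-complex formulation is the same argument in different clothing: the modules $\Ext^{n-i}_S(R,S)$ are precisely the cohomology modules of $\omega_R^\bullet$, your trace maps $\tau_j$ are the paper's $\Ext$ maps, and the normalization-shift bookkeeping you flag is handled in the paper by the catenary computation $\dim S_P=n-\dim R/\fp$.
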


\begin{proof}
We may present $R$ as a homomorphic image of a regular local ring $S$ of Krull dimension $n$. Set $d=\dim R$. Then $R_{\fp}$ is $F$-injective if and only if the natural map
$$
H^i_{\fp R_{\frak p}}(R_{\fp}) \to H^i_{\fp R_{\fp}}(R^{1/p}_{\fp})
$$
is injective for all $i \le \dim R_{\fp}$. Let $P$ be the preimage of $\fp$ in $S$ under the surjection $S \twoheadrightarrow R$. Note that $\dim S_P=n-\dim R/\fp$ ($S$ is a catenary domain). By Grothendieck's local duality theorem, the map
$$
\Ext^{n-\dim R/\fp - i}_{S_P}(R^{1/p}_{\fp}, S_P) \to \Ext^{n-\dim R/\fp - i}_{S_P}(R_{\fp}, S_P)
$$
is surjective for all $i \le \dim R_{\fp}$. For each $i \le d$, we set
$$
C_i:=\coker \big(\Ext^{n - i}_{S}(R^{1/p}, S) \to \Ext^{n - i}_{S}(R, S)\big).
$$
Therefore, $R_{\fp}$ is $F$-injective if and only if $\fp \notin \bigcup_{i = 0}^d \Supp(C_i)$, which is a closed subset in $\Spec R$ since $C_i$ is finitely generated for all $i \le d$.
\end{proof}

\begin{proposition}
\label{P1.13}
Assume that $(R,\fm,k)$ is an $F$-finite $F$-injective local ring. Then every ideal generated by a regular sequence is Frobenius closed.
\end{proposition}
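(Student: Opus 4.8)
The plan is to argue by contradiction and to reduce everything to the top local cohomology of a suitable localization, where $F$-injectivity can be applied directly. Write $I=(x_1,\ldots,x_t)$ for an ideal generated by a regular sequence and suppose $I^F\neq I$. Since $I\cdot I^F\subseteq I$, the nonzero finitely generated module $I^F/I$ is supported in $\V(I)$, so I may choose an associated prime $\fp\in\Ass_R(I^F/I)$, and then $\fp\supseteq I$. Passing to $S:=R_{\fp}$ with maximal ideal $\fn:=\fp R_{\fp}$ and $J:=IS$, Lemma \ref{commute} gives $J^F=(I^F)_{\fp}\neq J$ together with $\fn\in\Ass_S(J^F/J)$, while Lemma \ref{L1.11} (this is where the $F$-finiteness hypothesis is consumed) ensures that $S$ is again $F$-injective. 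Thus I can fix $v\in J^F$ with $v\notin J$ and $\fn v\subseteq J$, and I note that $x_1,\ldots,x_t$ remains a regular sequence in $S$ contained in $\fn$, hence a filter regular sequence on $S$.

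Next I would transport $v$ into local cohomology. Using the realization $H^t_J(S)=\varinjlim_n S/(x_1^n,\ldots,x_t^n)$, let $\zeta$ denote the image of $v+J$ under the canonical map from the term $n=1$. Two observations drive the argument. First, because $x_1,\ldots,x_t$ is a regular sequence, the standard colon identity $(x_1^{n+1},\ldots,x_t^{n+1}):_S(x_1\cdots x_t)^n=J$ holds for all $n$, so the limit closure satisfies $J^{\lim}=J$; hence the canonical map $S/J\hookrightarrow H^t_J(S)$ is injective and $\zeta\neq 0$ since $v\notin J$. Second, $\fn v\subseteq J$ forces $\fn\zeta=0$, so $\zeta$ lies in $H^0_{\fn}\big(H^t_J(S)\big)$, which by the Nagel--Schenzel isomorphism (Lemma \ref{nagel-shenzel}) is precisely $H^t_{\fn}(S)$, compatibly with the Frobenius actions.

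Finally I would compute the Frobenius action. Under the explicit description recalled in \S\ref{sec1}, the $e$-th iterate of the Frobenius sends $\zeta$, the class of $v+J$, to the class of $v^q+J^{[q]}$ with $q=p^e$. As $v\in J^F$, we have $v^q\in J^{[q]}$ for all $q\gg 0$, so this class vanishes in $H^t_J(S)$, and hence in $H^t_{\fn}(S)$. Thus an iterate of the Frobenius kills the nonzero element $\zeta\in H^t_{\fn}(S)$. This contradicts the $F$-injectivity of $S$, which makes the Frobenius on $H^t_{\fn}(S)$ injective and therefore all its iterates injective. Hence no such $\fp$ exists and $I^F=I$.

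I expect the main obstacle to be the passage to an associated prime. The image in $H^t_I(R)$ of an arbitrary element of $I^F$ need not be annihilated by $\fm$, so there is no reason for it to lie in the $H^0_{\fm}$-part where $F$-injectivity applies; localizing at $\fp\in\Ass(I^F/I)$ is exactly what produces an element annihilated by the maximal ideal of the local ring, and it is also the point at which $F$-finiteness is needed (through Lemma \ref{L1.11}, to preserve $F$-injectivity under localization). A secondary, though routine, point requiring care is the identity $J^{\lim}=J$ for regular sequences, since it is what guarantees $\zeta\neq 0$; without it the argument would collapse.
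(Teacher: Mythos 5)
Your proof is correct, and while its endgame coincides with the paper's, its overall organization is genuinely different. The paper argues by induction on $d=\dim R$: given $a$ with $a^q\in I^{[q]}$, it shows that the cyclic module $N=R\cdot(a+I)\subseteq R/I$ vanishes after localization at every prime $\fp\neq\fm$ --- using the inductive hypothesis (with Lemma \ref{L1.11}) when $I\subseteq\fp$, and Corollary \ref{C1.9} for the Cohen--Macaulay base cases $d=1$ and $t=d$ --- so that $N$ has finite length, hence $\overline{a}\in H^0_{\fm}\big(H^t_I(R)\big)\cong H^t_{\fm}(R)$ by Lemma \ref{nagel-shenzel}, and $F$-injectivity forces $\overline{a}=0$. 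You eliminate the induction entirely: a single localization at $\fp\in\Ass_R(I^F/I)$ manufactures in one step an element $v\in J^F\setminus J$ with $\fn v\subseteq J$, which is precisely the ``annihilated by the maximal ideal'' condition that the paper obtains via its inductive support analysis. From there your steps match the paper's: injectivity of $S/J\to H^t_J(S)$ because a regular sequence satisfies $J^{\lim}=J$ (the paper phrases this as injectivity of the maps in the direct system $\varinjlim S/J^{[q]}$; your colon identity is Remark \ref{limitclosure}(2)), the Nagel--Schenzel identification of $H^0_{\fn}\big(H^t_J(S)\big)$ with $H^t_{\fn}(S)$ compatibly with Frobenius, and the contradiction between Frobenius nilpotence of $\zeta$ and $F$-injectivity of $S=R_{\fp}$. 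Both arguments consume $F$-finiteness at the same place, Lemma \ref{L1.11}. Your associated-prime reduction is arguably cleaner --- it avoids the base cases and the dimension induction, at the small cost of invoking Lemma \ref{commute} for $(I^F)_{\fp}=(I_{\fp})^F$ (the paper needs only the trivial containment $a\in I^F\Rightarrow a/1\in (IR_{\fp})^F$) --- while the paper's version exhibits the obstruction module and its finite length explicitly, a shape of argument it reuses later, e.g.\ in the proof of Theorem \ref{characterization}.
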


\begin{proof}
By Remark \ref{dualizing}, $R$ has a dualizing complex. Let $t=\depth R$ and $d=\dim R$. Since the length of every regular sequence of maximal length is equal to $t$, it is enough to prove that every ideal generated by a regular sequence $x_1,\ldots,x_t$ is Frobenius closed by \cite[Lemma 3.1]{M15}. We proceed by induction on $d$. The case $d = 1$ follows from Corollary \ref{C1.9}, since $R$ is Cohen-Macaulay. For $d>1$, if $t = d$, then we use Corollary \ref{C1.9} again. Therefore, we can assume henceforth that $t<d$. Set $I = (x_1, \ldots, x_t)$. We have
$$
H^t_I(R) \cong \varinjlim R/I^{[q]}.
$$
Since $I$ is generated by a regular sequence, all the maps in the direct system are injective. Thus, the natural map $R/I \to H^t_I(R)$ is injective. Suppose that $a \in R$ satisfies $a^q \in I^{[q]}$ for some $q = p^e$. Let $\overline{a}$ be the image of $a +I \in R/I$ in $H^t_I(R)$. Then $\overline{a}$ is nilpotent under the Frobenius action on $H^t_I(R)$. Let $N \subseteq R/I$ be the cyclic $R$-module generated by $a+I \in R/I$. Let $\fp$ be a prime ideal such that $I \not\subseteq \fp$ and $\fp \neq \fm$. Then $N_{\fp}=0$ quite evidently. Let $\fp$ be a prime ideal such that $I \subseteq \fp$ and $\fp \neq \fm$. Then $R_\fp$ is $F$-injective by Lemma \ref{L1.11} and $x_1,\ldots,x_t$ is also an $R_{\fp}$-regular sequence. By induction hypothesis, $IR_{\fp}$ is Frobenius closed and we have $a \in IR_{\fp}$, which implies that $N_{\fp}=0$ for all $\fp \ne \fm$ and $N$ is a finite length $R/I$-module. That is, $a \in I: \fm^k$ for $k \gg 0$. Note that $H^t_{\fm}(R) \cong H^0_{\fm}(H^t_I(R))$. Therefore $\overline{a} \in H^t_{\fm}(R)$. Since $R$ is $F$-injective, we have $\overline{a}=0$. Hence the injectivity of $R/I \to H^t_I(R)$ shows that $a \in I$.
\end{proof}

\begin{corollary}
\label{Frobeniuslocal}
Let $(R,\fm,k)$ be an $F$-finite local ring and let $I=(x,x_2,\ldots,x_t)$ be an ideal of $R$ which is generated by a regular sequence. Assume that $R/xR$ is $F$-injective. Then the Frobenius actions on both $H^t_I(R)$ and $H^t_{\frak m}(R)$ are injective.
\end{corollary}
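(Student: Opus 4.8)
The plan is to avoid comparing the Frobenius actions on $R$ and on $\overline{R}:=R/xR$ at the level of local cohomology, and instead to reduce everything to the single ideal-theoretic assertion that $I$ is Frobenius closed in $R$. Granting that assertion, Lemma~\ref{L1.6} upgrades it to the closedness of all the ideals $I^{[p^n]}$, and then Theorem~\ref{T1.2}(1) together with its proof delivers injectivity of the Frobenius on both $H^t_I(R)$ and $H^t_{\fm}(R)$. So the whole content is the claim that $I$ is Frobenius closed.

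To prove that claim I would first pass to $\overline{R}=R/xR$. Since $x,x_2,\ldots,x_t$ is a regular sequence, the images $\overline{x_2},\ldots,\overline{x_t}$ form a regular sequence in $\overline{R}$, and $\overline{R}$ is an $F$-finite local ring, being a quotient of the $F$-finite ring $R$. The hypothesis that $\overline{R}$ is $F$-injective then lets me invoke Proposition~\ref{P1.13} to conclude that $\overline{I}:=(\overline{x_2},\ldots,\overline{x_t})$ is Frobenius closed in $\overline{R}$.

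The key step is to lift this from $\overline{R}$ to $R$. Given $a\in I^F$, reduction modulo $xR$ annihilates the generator $x^q$ of $I^{[q]}$, so the relations $a^q\in I^{[q]}$ for $q=p^e\gg0$ become $\overline{a}^{\,q}\in\overline{I}^{[q]}$; hence $\overline{a}\in\overline{I}^F=\overline{I}$, and lifting back to $R$ gives $a\in(x,x_2,\ldots,x_t)=I$. Thus $I^F=I$. Lemma~\ref{L1.6} (using that a regular sequence in a local ring remains regular in every order) now shows that $I^{[p^n]}$ is Frobenius closed for all $n\ge0$. Since $x,x_2,\ldots,x_t$ is in particular a filter regular sequence, the direct-limit diagram in the proof of Theorem~\ref{T1.2}(1) makes the Frobenius act injectively on $H^t_I(R)\cong\varinjlim R/I^{[q]}$, and the Nagel--Schenzel identification $H^t_{\fm}(R)\cong H^0_{\fm}(H^t_I(R))$ of $H^t_{\fm}(R)$ with a Frobenius-stable submodule of $H^t_I(R)$ transports injectivity to $H^t_{\fm}(R)$.

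I expect the only delicate point to be the one this route is designed to dodge. A direct attack through the short exact sequence $0\to R\xrightarrow{\ x\ }R\to\overline{R}\to0$ is awkward, because the natural Frobenius sends $(0:_{H^t_I(R)}x)$ into $(0:_{H^t_I(R)}x^p)$ rather than preserving it, so the identification of $(0:_{H^t_I(R)}x)$ with $H^{t-1}_{\overline{I}}(\overline{R})$ is not visibly Frobenius-equivariant. Converting the $F$-injectivity of $\overline{R}$ into the closedness of $I$ before turning to local cohomology sidesteps this incompatibility, so the one computation to be checked with care is the clean reduction modulo $x$ in the preceding paragraph.
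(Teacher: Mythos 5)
Your proposal is correct and takes essentially the same route as the paper's own proof: both apply Proposition~\ref{P1.13} to the $F$-finite $F$-injective quotient $R/xR$ to get that $(\overline{x_2},\ldots,\overline{x_t})$ is Frobenius closed, lift this modulo $x$ to conclude $I^F=I$, then invoke Lemma~\ref{L1.6} and the direct-limit diagram from Theorem~\ref{T1.2}(1) together with the Nagel--Schenzel isomorphism $H^t_{\fm}(R)\cong H^0_{\fm}(H^t_I(R))$. The only difference is that you make explicit two points the paper leaves implicit, namely the permutability of regular sequences in a local ring (needed for Lemma~\ref{L1.6}) and the Frobenius-equivariance issue that this ideal-theoretic reduction avoids.
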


\begin{proof}
By Proposition \ref{P1.13}, the ideal $(\overline{x_2},\ldots,\overline{x_t}) \subseteq R/xR$ is Frobenius closed. We prove that $I=(x,x_2,\ldots,x_d)$ is Frobenius closed. For this, let $u^q \in (x^q,x_2^q,\ldots,x_d^q)$ for $u \in R$ and $q=p^e \gg 0$. Then mapping this relation to the quotient ring $R/xR$ and since $\overline{x_2},\ldots,\overline{x_d}$ forms a system of parameters of $R/xR$, we have $u \in xR+(x_2,\ldots,x_d)$ by the assumption that $R/xR$ is $F$-injective, proving that $I$ is Frobenius closed. By Lemma \ref{L1.6}, the Frobenius power $I^{[p^e]}$ is also Frobenius closed.

We consider the commutative diagram
$$
\begin{CD}
R/I @>>> R/I^{[p]} @>>> R/I^{[p^2]} @>>> \cdots \\
@VFVV @VFVV @VFVV \\
R/I^{[p]} @>>> R/I^{[p^2]} @>>> R/I^{[p^3]} @>>> \cdots
\end{CD}
$$
The vertical map is the Frobenius map and the direct limit of the horizontal direction is the local cohomology $H^t_{I}(R)$ and the Frobenius action on $H^t_{I}(R)$ is injective. It is clear that a regular sequence is a filter regular sequence and by Nagel-Schenzel isomorphism:
$$
H^t_{\fm}(R) \cong H^0_{\fm}(H^t_{I}(R)),
$$
the Frobenius action on $H^t_{\fm}(R)$ is injective.
\end{proof}

\begin{remark}
Assume that $R$ is a weakly normal Noetherian ring of characteristic $p$. Then we can show that every principal ideal generated by a regular element $x \in R$ is Frobenius closed. To see this, let $y \in (x)^F$. Then $y^q \in (x^q)$ for some $q=p^e$. Hence we have
$$
\displaystyle \big(\big(\frac{y}{x}\big)^{p^{e-1}}\big)^p=\big(\frac{y}{x}\big)^{p^e} \in R.
$$
Considering this relation to belong to the total ring of fractions of $R$, we must have $\displaystyle \big(\frac{y}{x}\big)^{p^{e-1}} \in R$ by the definition of weak normality. That is, $y^{p^{e-1}} \in (x^{p^{e-1}})$. By induction on $e \ge 0$, it follows that $y \in (x)$, proving that the principal ideal $(x)$ is Frobenius closed. If $(R,\fm,k)$ is an $F$-finite $F$-injective local ring, then $R$ is weakly normal (cf. \cite[Theorem 4.7]{Sch09}).
\end{remark}

Recall that an $F$-injective local ring is reduced by Lemma \ref{L1.11}.

\begin{corollary}
\label{C1.15}
Let $(R,\fm,k)$ be an $F$-finite $F$-injective local ring. Let $x_1,\ldots,x_t$ be a regular sequence of $R$. Then $x_1,\ldots,x_t$ is a regular sequence of $R^{1/q}/R$.
\end{corollary}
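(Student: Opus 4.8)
The plan is to exploit the short exact sequence of finitely generated $R$-modules
$$
0 \to R \to R^{1/q} \to R^{1/q}/R \to 0,
$$
where $R^{1/q}$ makes sense because $R$ is reduced by Lemma \ref{L1.11}, and $R^{1/q}$ (hence the cokernel $R^{1/q}/R$) is a finitely generated $R$-module because $R$ is $F$-finite. Write $I=(x_1,\ldots,x_t)$. I would reduce the statement to the standard homological criterion: if $x_1,\ldots,x_t$ is a regular sequence on both $R$ and $R^{1/q}$, and the induced map $R/I \to R^{1/q}/IR^{1/q}$ is injective, then $x_1,\ldots,x_t$ is a regular sequence on the quotient $R^{1/q}/R$. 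Thus the proof splits into three tasks: regularity on $R^{1/q}$, injectivity of $R/I \to R^{1/q}/IR^{1/q}$, and assembling these via the long exact sequence of Koszul homology.

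For the first task, observe that multiplication by $x_i \in R \subseteq R^{1/q}$ is the same endomorphism of the abelian group $R^{1/q}$ whether we regard $R^{1/q}$ as an $R$-module or as a ring; hence $x_1,\ldots,x_t$ is an $R$-regular sequence on $R^{1/q}$ if and only if it is a regular sequence inside the ring $R^{1/q}$. Via the $q$-th power ring isomorphism $R^{1/q} \xrightarrow{\sim} R$, which carries $x_i$ to $x_i^q$, this is equivalent to $x_1^q,\ldots,x_t^q$ being a regular sequence on $R$, and that is automatic because powers of a regular sequence again form a regular sequence.

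The second task is where $F$-injectivity enters, and I expect it to be the main point. I would first identify the contraction $IR^{1/q}\cap R$ explicitly: using the Frobenius identity $\big(\sum x_i y_i\big)^q=\sum x_i^q y_i^q$ in characteristic $p$ together with the uniqueness of $q$-th roots in the reduced ring $R^{1/q}$, one checks that $IR^{1/q}\cap R=\{r\in R : r^q\in I^{[q]}\}$. Since $r^q\in I^{[q]}$ forces $r^{qp^j}\in I^{[qp^j]}$ for all $j$, every such $r$ lies in the Frobenius closure $I^F$. Now $I$ is generated by a regular sequence, so Proposition \ref{P1.13} gives $I^F=I$; hence $IR^{1/q}\cap R=I$, which is exactly the injectivity of $R/I\to R^{1/q}/IR^{1/q}$.

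Finally, applying the long exact sequence of Koszul homology to the displayed short exact sequence and using that $H_i(\ux;R)=H_i(\ux;R^{1/q})=0$ for $i\ge 1$ (first task), the only possibly nonzero term $H_1(\ux;R^{1/q}/R)$ is identified with the kernel of $R/I\to R^{1/q}/IR^{1/q}$, which vanishes by the second task, while all higher Koszul homology vanishes for degree reasons. Since $R^{1/q}/R$ is finitely generated and $I\subseteq\fm$, the vanishing of $H_i(\ux;R^{1/q}/R)$ for all $i\ge 1$ means precisely that $x_1,\ldots,x_t$ is a regular sequence on $R^{1/q}/R$. The only real obstacle is the second task: the identification of $IR^{1/q}\cap R$ and the appeal to Proposition \ref{P1.13} are what make $F$-injectivity indispensable, whereas the first and third tasks are formal.
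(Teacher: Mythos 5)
Your proof is correct, and while it rests on the same key lemma as the paper's proof, it assembles the pieces by a genuinely different homological route. The paper applies Proposition \ref{P1.13} to \emph{every} truncated ideal $I_i=(x_1,\ldots,x_i)$, $0\le i\le t$: Frobenius closedness of each $I_i$ yields the short exact sequence $0\to R/I_i\to R^{1/q}/I_iR^{1/q}\to (R^{1/q}/R)/I_i(R^{1/q}/R)\to 0$, and then an induction on $i$, applying the snake lemma to multiplication by $x_i$ on the sequence at level $i-1$, shows directly that $0:_{(R^{1/q}/R)/I_{i-1}(R^{1/q}/R)}x_i=0$. You instead invoke Proposition \ref{P1.13} only once, for the full ideal $I=I_t$; your identification $IR^{1/q}\cap R=\{r\in R: r^q\in I^{[q]}\}\subseteq I^F=I$ (using uniqueness of $q$-th roots in the reduced ring) is exactly the mechanism by which the paper implicitly obtains its short exact sequences, so the use of $F$-injectivity is the same, but you then replace the elementwise induction by a single long exact sequence of Koszul homology plus depth sensitivity: $H_i(\ux;R^{1/q}/R)=0$ for $i\ge 2$ automatically, $H_1(\ux;R^{1/q}/R)=\ker\bigl(R/I\to R^{1/q}/IR^{1/q}\bigr)=0$, and $H_1=0$ forces regularity for a finitely generated module over a Noetherian local ring with $\ux\subseteq\fm$. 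Your verification that $x_1,\ldots,x_t$ is regular on $R^{1/q}$ via the $q$-th power isomorphism $R^{1/q}\cong R$ (sending $x_i$ to $x_i^q$) fills in a step the paper asserts without comment. The trade-off: the paper's argument is entirely elementary (snake lemma only) and tracks the injectivity of each $x_i$ explicitly at every stage, whereas yours is more economical—one application of Proposition \ref{P1.13}, one long exact sequence—at the cost of citing the rigidity/depth-sensitivity theorem for Koszul complexes. The only caveat, shared equally by the paper, is the trivial edge case $R^{1/q}=R$, where the conclusion holds only under the usual convention about regular sequences on the zero module.
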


\begin{proof}
For $i = 0,\ldots,t$, let $I_i=(x_1, \ldots, x_i)$. By Proposition \ref{P1.13}, $I_i$ is Frobenius closed for all $i \le t$. Thus we have the following short exact sequence
$$
0 \to R/I_iR \to R^{1/q}/I_iR^{1/q} \to (R^{1/q}/R)/I_i(R^{1/q}/R) \to 0.$$
For each $i = 1, \ldots, t$ we consider the following commutative diagram
$$
\begin{CD}
0 @>>> R/I_{i-1}R @>>> R^{1/q}/I_{i-1}R^{1/q} @>>> (R^{1/q}/R)/I_{i-1}(R^{1/q}/R) @>>>0\\
@. @VVx_iV @VVx_iV @VVx_iV \\
0 @>>> R/I_{i-1}R @>>> R^{1/q}/I_{i-1}R^{1/q} @>>> (R^{1/q}/R)/I_{i-1}(R^{1/q}/R) @>>> 0.
\end{CD}
$$
Since $x_1, \ldots , x_t$ is a regular sequence of both $R$ and $R^{1/q}$ we have the following exact sequence
$$0 \to 0:_{(R^{1/q}/R)/I_{i-1}(R^{1/q}/R)}x_i \to R/I_iR \to R^{1/q}/I_iR^{1/q} \to (R^{1/q}/R)/I_i(R^{1/q}/R) \to 0.$$
Therefore $0:_{(R^{1/q}/R)/I_{i-1}(R^{1/q}/R)}x_i = 0$ for all $i = 1, \ldots, t$. Thus $x_1, \ldots,x_t$ is a regular sequence of $R^{1/q}/R$.
\end{proof}

We need the following corollary in the sequel.

\begin{corollary}\label{C1.16}
Let $(R,\fm,k)$ be an $F$-finite $F$-injective local ring. Let $x_1,\ldots,x_t$ be a filter regular sequence of $R$. Then $x_1,\ldots,x_t$ is a filter regular sequence of $R^{1/q}/R$.
\end{corollary}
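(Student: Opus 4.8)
The plan is to reduce this filter regular statement to the regular sequence statement of Corollary \ref{C1.15} by localizing away from the maximal ideal. First I would invoke the characterization of filter regular sequences in Lemma \ref{filter}(2): to show that $x_1,\ldots,x_t$ is a filter regular sequence on the module $M=R^{1/q}/R$, it suffices to fix $i$ with $1 \le i \le t$ and verify that $x_1/1,\ldots,x_i/1$ is an $R_\fp$-regular sequence on $(R^{1/q}/R)_\fp$ for every prime $\fp \in \big(\Spec(R/(x_1,\ldots,x_i)) \cap \Supp_R(R^{1/q}/R)\big) \setminus \{\fm\}$. In this way the whole problem becomes local at primes $\fp \neq \fm$ containing $(x_1,\ldots,x_i)$.

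Next I would assemble the good properties of $R_\fp$. By Lemma \ref{L1.11}, since $R$ is $F$-finite and $F$-injective, $R_\fp$ is $F$-injective; it is also $F$-finite (a localization of an $F$-finite ring is $F$-finite) and local, hence reduced, so $(R_\fp)^{1/q}$ is defined. Moreover, because $x_1,\ldots,x_t$ is a filter regular sequence on $R$ and $\Supp_R R = \Spec R$, applying Lemma \ref{filter}(2) to $M=R$ shows that $x_1/1,\ldots,x_i/1$ is in fact an $R_\fp$-regular sequence on $R_\fp$ for every $\fp \neq \fm$ containing $(x_1,\ldots,x_i)$.

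The key identification I would then use is that the $(\cdot)^{1/q}$ construction commutes with localization: exactly as in the proof of Lemma \ref{commute} for the perfect closure, one has $(R^{1/q})_\fp \cong (R_\fp)^{1/q}$, and localizing the short exact sequence $0 \to R \to R^{1/q} \to R^{1/q}/R \to 0$ at $\fp$ yields $(R^{1/q}/R)_\fp \cong (R_\fp)^{1/q}/R_\fp$. Now I would apply Corollary \ref{C1.15} to the $F$-finite $F$-injective local ring $R_\fp$ together with the $R_\fp$-regular sequence $x_1/1,\ldots,x_i/1$; it gives that $x_1/1,\ldots,x_i/1$ is a regular sequence on $(R_\fp)^{1/q}/R_\fp \cong (R^{1/q}/R)_\fp$. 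This is precisely the condition demanded by Lemma \ref{filter}(2) for $M=R^{1/q}/R$, so $x_1,\ldots,x_t$ is a filter regular sequence on $R^{1/q}/R$, as required.

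The main obstacle, and essentially the only nonformal step, is justifying the compatibility $(R^{1/q})_\fp \cong (R_\fp)^{1/q}$ of the $q$-th root construction with localization: one must check that inverting the elements of $R \setminus \fp$ inside $R^{1/q}$ produces exactly the $q$-th root ring of $R_\fp$, taking care that $R_\fp$ remains reduced so that $(R_\fp)^{1/q}$ is the correct object formed via the product over the minimal primes of $R_\fp$. This follows from the same exactness-of-localization argument used for $R^\infty$ in the proof of Lemma \ref{commute}, but it is worth spelling out so that the identification of the localized module $(R^{1/q}/R)_\fp$ is unambiguous.
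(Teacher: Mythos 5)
Your proposal is correct and follows exactly the paper's own (very terse) proof: the paper likewise reduces via Lemma \ref{filter} to checking regularity of the localized sequence on $(R^{1/q}/R)_{\fp}$ at primes $\fp \neq \fm$, invokes Lemma \ref{L1.11} to see that each $R_{\fp}$ is a reduced $F$-finite $F$-injective local ring, and then applies Corollary \ref{C1.15}. The only difference is that you spell out the compatibility $(R^{1/q})_{\fp} \cong (R_{\fp})^{1/q}$ (which indeed holds, e.g.\ since any $s \in R^{1/q}$ with $s^q \in R \setminus \fp$ becomes a unit once $R \setminus \fp$ is inverted), a step the paper leaves implicit.
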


\begin{proof}
$R_{\fp}$ is a reduced $F$-finite $F$-injective local ring for $\fp \in \Spec R$ by Lemma \ref{L1.11}. The corollary follows from Lemma \ref{filter} together with Corollary \ref{C1.15}.
\end{proof}

\section{Generalized Cohen-Macaulay rings}
\label{sec3}

Let us recall the definition of generalized Cohen-Macaulay modules. Let $\ell_R(M)$ denote the length of an $R$-module $M$. Let $M$ be a finitely generated module over a local ring $(R,\fm,k)$ and let $\fq$ be a parameter ideal of $M$. We denote by $e(\fq,M)$ the multiplicity of $M$ with respect to $\fq$ (cf. \cite{BH98} for details).

\begin{definition}
\label{nCM1}
Let $M$ be a finitely generated module over a Noetherian local ring $(R,\fm,k)$ such that $d=\dim M>0$. Then $M$ is called \textit{generalized Cohen-Macaulay}, if the difference
$$
\ell_R(M/\fq M)-e(\fq,M)
$$
is bounded above, where $\fq$ ranges over the set of all parameter ideals of $M$.
\end{definition}

The following characterization of generalized Cohen-Macaulay modules plays the key role in this section.

\begin{theorem}
\label{charac gCM}
$M$ is generalized Cohen-Macaulay if and only if $H^i_{\fm}(M)$ is a finitely generated $R$-module for all $i < d=\dim M$.
\end{theorem}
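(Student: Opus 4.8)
The plan is to prove the sharper quantitative statement that
$$\sup_{\fq}\big(\ell_R(M/\fq M)-e(\fq,M)\big)=\sum_{i=0}^{d-1}\binom{d-1}{i}\,\ell_R\big(H^i_{\fm}(M)\big),$$
with the convention that the two sides are infinite simultaneously; the theorem then follows immediately, because each $H^i_{\fm}(M)$ is Artinian and so is finitely generated if and only if it has finite length, and the right-hand side is finite exactly when every $H^i_{\fm}(M)$ with $i<d$ has finite length. Throughout I write $I(\fq;M):=\ell_R(M/\fq M)-e(\fq,M)$; by Serre's expression of the multiplicity as the Euler characteristic of the Koszul complex \cite{BH98}, $I(\fq;M)$ is the alternating sum of the lengths of the higher Koszul homology, and in particular $I(\fq;M)\ge 0$.

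First I would settle the base case $d=1$. Here $H^0_{\fm}(M)=\Gamma_{\fm}(M)$ is automatically of finite length, being a finitely generated $\fm$-power-torsion module, so nothing needs checking on the cohomological side and I only verify the formula. For a parameter $x$ one has $\dim M/xM=0$, and $0:_M x$ is of finite length since its support avoids every non-maximal prime (where $x$ is a unit), so Serre's formula gives $I((x);M)=\ell_R(0:_M x)$. As $0:_M x$ has finite length it is contained in $\Gamma_{\fm}(M)$, whence $\ell_R(0:_M x)\le \ell_R(\Gamma_{\fm}(M))$; and choosing $x\in\fm^{N}$ with $\fm^{N}\Gamma_{\fm}(M)=0$ forces $\Gamma_{\fm}(M)\subseteq 0:_M x$, so the supremum equals $\ell_R(\Gamma_{\fm}(M))$, as required.

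For the inductive step ($d\ge 2$) I would fix a parameter $x$ that is simultaneously superficial for the relevant parameter ideals and a filter regular element, so that $\dim M/xM=d-1$. Two ingredients drive the induction. On the multiplicity side, superficiality yields the clean reduction $e(\fq,M)=e(\bar\fq,M/xM)$ for $d\ge 2$, where $\bar\fq$ is the image of a parameter ideal $\fq\ni x$. On the length side, the exact sequence $0\to 0:_M x\to M\to M\to M/xM\to 0$ (with multiplication by $x$ in the middle, and $0:_M x$ of finite length) yields, for every $i\ge 1$, exact sequences exhibiting $H^i_{\fm}(M/xM)$ as an extension of $\coker\big(x\colon H^i_{\fm}(M)\to H^i_{\fm}(M)\big)$ by $\ker\big(x\colon H^{i+1}_{\fm}(M)\to H^{i+1}_{\fm}(M)\big)$. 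Since $x\in\fm$ and each $H^j_{\fm}(M)$ is an Artinian $\fm$-torsion module, a standard length estimate shows that $H^i_{\fm}(M/xM)$ is of finite length for all $i<d-1$ exactly when $H^i_{\fm}(M)$ is of finite length for all $i<d$; that is, $M/xM$ is generalized Cohen-Macaulay precisely when $M$ is. Applying the induction hypothesis to $M/xM$ and comparing $\ell_R(M/\fq M)$ with $\ell_R\big((M/xM)/\bar\fq(M/xM)\big)$ transports the supremum formula from dimension $d-1$ to dimension $d$.

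The main obstacle is the bookkeeping in this last step: one must choose $x$ general enough that the multiplicity reduction is \emph{exact} and that the supremum over all systems of parameters is already detected on such general parameters (a superficiality/semicontinuity argument), and one must track how the two contributions in the local cohomology sequences---a cokernel in degree $i$ and a kernel in degree $i+1$---recombine under Pascal's rule $\binom{d-1}{i}=\binom{d-2}{i-1}+\binom{d-2}{i}$ to upgrade the dimension-$(d-1)$ sum $\sum_{i}\binom{d-2}{i}\ell_R(H^i_{\fm}(M/xM))$ to the desired $\sum_{i}\binom{d-1}{i}\ell_R(H^i_{\fm}(M))$. This is precisely the computation underlying the theory of generalized Cohen-Macaulay modules \cite{CST78}, and the remaining ingredients (positivity of $I$, the Artinianness of local cohomology, and the finiteness transfer) are standard.
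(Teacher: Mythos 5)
The paper itself offers no proof of this statement: it is the classical characterization of generalized Cohen--Macaulay modules from \cite{CST78} (see also \cite{Tr86}, \cite{SV86}), quoted as known. Your plan --- prove the quantitative formula $\sup_{\fq} I(\fq;M)=\sum_{i=0}^{d-1}\binom{d-1}{i}\ell_R(H^i_{\fm}(M))$, with $I(\fq;M)=\ell_R(M/\fq M)-e(\fq,M)$, by induction on $d$ via filter regular parameter elements --- is exactly the classical route, and your base case $d=1$ and the implication ``$M$ generalized Cohen--Macaulay $\Rightarrow M/xM$ generalized Cohen--Macaulay'' are correct. But the converse transfer, which is the hard direction of the whole theorem, has a genuine gap.

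The step ``a standard length estimate shows that $H^i_{\fm}(M/xM)$ is of finite length for all $i<d-1$ exactly when $H^i_{\fm}(M)$ is of finite length for all $i<d$'' is not valid for a single fixed $x$. Your exact sequences only give finiteness of $\coker\big(x\colon H^i_{\fm}(M)\to H^i_{\fm}(M)\big)$ and $\ker\big(x\colon H^{i+1}_{\fm}(M)\to H^{i+1}_{\fm}(M)\big)$, and for an Artinian $\fm$-torsion module this does not force finite length: if $R$ is a one-dimensional local domain, $0\neq x\in\fm$, and $A=E_R(k)$ is the injective hull of the residue field (so $A\cong H^1_{\fm}(R)$ when $R$ is Cohen--Macaulay), then $xA=A$ and $0:_A x\cong \Hom_R(R/xR,A)$ has finite length, yet $A$ does not. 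The classical repair --- the actual content of \cite{CST78} --- is a uniformity argument over \emph{varying} $x$: for a filter regular parameter element $x$ and $\fq\ni x$ one has $I(\bar\fq;M/xM)=I(\fq;M)$ (lengths agree since $x\in\fq$; multiplicities agree since $\dim(0:_M x)\le 0$), so the induction hypothesis yields $\ell_R(H^i_{\fm}(M/xM))\le \sup_{\fq} I(\fq;M)$, a bound \emph{independent of the choice of} $x$. Since filter regular parameter elements exist in $\fm^n$ for every $n$ by prime avoidance, and $0:_{H^{i+1}_{\fm}(M)}\fm^n\subseteq 0:_{H^{i+1}_{\fm}(M)}x$ embeds into $H^i_{\fm}(M/xM)$, one gets $\ell_R\big(0:_{H^{i+1}_{\fm}(M)}\fm^n\big)\le \sup_{\fq}I(\fq;M)$ for all $n$; as $H^{i+1}_{\fm}(M)$ is $\fm$-power torsion, it is the increasing union of these submodules and hence of finite length. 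Without letting $x$ sink into arbitrarily high powers of $\fm$, your induction does not close. A smaller issue of the same kind: your upper bound $\sup_{\fq}I(\fq;M)\le\sum_{i}\binom{d-1}{i}\ell_R(H^i_{\fm}(M))$ must cover arbitrary parameter ideals $\fq$, which need not contain your chosen good $x$; this requires knowing that once the $H^i_{\fm}(M)$, $i<d$, are finite, \emph{every} parameter element is automatically filter regular (e.g.\ after completing, via Grothendieck's finiteness theorem as in Remark \ref{R5.10}(2)), a point your ``superficiality/semicontinuity'' remark does not supply.
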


\begin{remark}
Let the notation be as in Definition \ref{nCM1}.
\begin{enumerate}
\item
Under mild conditions of the base ring, $M$ is generalized Cohen-Macaulay if and only if the non-Cohen-Macaulay locus is isolated, and if and only if
every system of parameters forms a filter regular sequence (cf. \cite{CST78}).

\item
Let $M$ be a generalized Cohen-Macaulay $R$-module over $(R,\fm,k)$ such that $d=\dim M>0$. Then
$$
\ell_R(M/\fq M)-e(\fq, M) \le \sum_{i=0}^{d-1} \binom{d-1}{i} \ell_R(H^i_{\fm}(M))
$$
for every parameter ideal $\fq$ of $M$.
\end{enumerate}
\end{remark}

\subsection{Buchsbaum rings and standard sequences}

\begin{definition}[cf. \cite{Tr86}]
\label{nCM2}
Let $M$ be a finitely generated module over a Noetherian local ring $(R,\fm,k)$ such that $d=\dim M>0$. A parameter ideal $\fq$ of $M$ is called \textit{standard} if
$$
\ell_R(M/\fq M)-e(\fq, M) = \sum_{i=0}^{d-1} \binom{d-1}{i} \ell_R(H^i_{\fm}(M)).
$$
We say that $M$ is \textit{Buchsbaum}, if every parameter ideal of $M$ is standard.
\end{definition}

\begin{remark}
\label{R4.4}
Let $M$ be a generalized Cohen-Macaulay module over $(R,\fm,k)$ such that $d=\dim M>0$ and let $n \in \mathbb{N}$ be a positive integer such that $\fm^n \, H^i_{\fm}(M) = 0$ for all $i<d$. Then every parameter element $x \in \fm^{2n}$ of $M$ admits the splitting property, i.e., $H^i_{\fm}(M/xM) \cong H^i_{\fm}(M) \oplus H^{i+1}_{\fm}(M)$ for all $i<d-1$. Furthermore, every parameter ideal contained in $\fm^{2n}$ is standard (cf. \cite{CQ11}).
\end{remark}

We will use the following characterization of standard parameter ideal as its definition.

\begin{theorem}
\label{charac standard}
A parameter ideal $\fq=(x_1,\ldots,x_d)$ of $M$ is standard if and only if for every $i+j<d$, we have the equality:
$$
\fq \, H^i_{\fm}\Big(\dfrac{M}{(x_1, \ldots, x_j)M}\Big)=0.
$$
\end{theorem}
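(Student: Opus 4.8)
The plan is to prove both implications simultaneously by induction on $d=\dim M$, comparing the invariant $I_M(\fq):=\ell_R(M/\fq M)-e(\fq,M)$ with its upper bound; recall that for every generalized Cohen--Macaulay module $N$ one has $I_N(\cdot)\le\sum_{i<\dim N}\binom{\dim N-1}{i}\ell_R(H^i_\fm(N))$, and by Definition \ref{nCM2} the ideal $\fq$ is standard precisely when this bound is attained. Throughout I write $M_j:=M/(x_1,\ldots,x_j)M$ and $\overline{M}:=M_1$, and I use repeatedly that, $M$ being generalized Cohen--Macaulay, every system of parameters is a filter-regular sequence in any order, so that each $0:_{M_j}x_{j+1}$ has finite length and $H^i_\fm(M_j)$ has finite length for $i<\dim M_j$.

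For the base case $d=1$ the condition to be characterized is just $x_1H^0_\fm(M)=0$, while standardness reads $I_M((x_1))=\ell_R(H^0_\fm(M))$. From the exact sequence $0\to 0:_Mx_1\to M\xrightarrow{x_1}M\to M/x_1M\to 0$ and the dimension-one multiplicity formula one gets $I_M((x_1))=\ell_R(0:_Mx_1)$. Since $0:_Mx_1$ has finite length it is contained in $H^0_\fm(M)$, and $\ell_R(0:_Mx_1)=\ell_R(H^0_\fm(M))$ holds if and only if $0:_Mx_1=H^0_\fm(M)$, i.e. if and only if $x_1H^0_\fm(M)=0$. This settles $d=1$.

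For the inductive step I pass to $\overline{M}=M/x_1M$ with the parameter ideal $\overline{\fq}=(x_2,\ldots,x_d)$. First, $\ell_R(M/\fq M)=\ell_R(\overline{M}/\overline{\fq}\,\overline{M})$ is immediate, and the additivity formula for multiplicities together with $\ell_R(0:_Mx_1)<\infty$ gives $e(\fq,M)=e(\overline{\fq},\overline{M})$; hence $I_M(\fq)=I_{\overline{M}}(\overline{\fq})$. Next, splitting $0\to 0:_Mx_1\to M\xrightarrow{x_1}M\to\overline{M}\to 0$ into two short exact sequences yields, because $0:_Mx_1$ has finite length, the long exact sequence
$$
\cdots\to H^i_\fm(M)\xrightarrow{x_1}H^i_\fm(M)\to H^i_\fm(\overline{M})\to H^{i+1}_\fm(M)\xrightarrow{x_1}H^{i+1}_\fm(M)\to\cdots,
$$
from which $\ell_R(H^i_\fm(\overline{M}))=\ell_R\big(H^i_\fm(M)/x_1H^i_\fm(M)\big)+\ell_R\big(0:_{H^{i+1}_\fm(M)}x_1\big)$ for $0\le i\le d-2$. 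Summing against $\binom{d-2}{i}$ and collapsing by Pascal's identity shows
$$
\sum_{i=0}^{d-2}\binom{d-2}{i}\ell_R(H^i_\fm(\overline{M}))\le\sum_{i=0}^{d-1}\binom{d-1}{i}\ell_R(H^i_\fm(M)),
$$
with equality precisely when $x_1H^i_\fm(M)=0$ for all $i<d$. Combining these with $I_M(\fq)=I_{\overline{M}}(\overline{\fq})$ and the upper bounds for $M$ and $\overline{M}$, one sees that $\fq$ is standard on $M$ if and only if $\overline{\fq}$ is standard on $\overline{M}$ and $x_1H^i_\fm(M)=0$ for all $i<d$. By the induction hypothesis the first condition is equivalent to $\fq H^i_\fm(M_j)=0$ for all $j\ge 1$ and $i+j<d$ (note $x_1M_j=0$ for $j\ge 1$, so $\fq$ and $\overline{\fq}$ act identically there, and $\overline{M}_{j-1}=M_j$). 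Finally, since $I_M(\fq)$ depends only on the ideal $\fq$, I may run the same reduction with any $x_k$ in place of $x_1$; standardness of $\fq$ therefore forces $x_kH^i_\fm(M)=0$ for every $k$ and $i<d$, i.e. $\fq H^i_\fm(M)=0$, which is exactly the $j=0$ instance of the desired condition. This closes the induction.

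The main obstacle I anticipate is the bookkeeping in the inductive step: one must verify the multiplicity reduction $e(\fq,M)=e(\overline{\fq},\overline{M})$, extract from the long exact sequence the precise length identity, and then carry out the Pascal-identity collapse so that the two binomial bounds match exactly under the hypothesis $x_1H^i_\fm(M)=0$. A secondary subtlety is the use of symmetry to upgrade the single-element annihilation $x_1H^i_\fm(M)=0$ produced by the reduction to the full statement $\fq H^i_\fm(M)=0$; this is legitimate only because every permutation of a system of parameters of a generalized Cohen--Macaulay module is again a filter-regular sequence, keeping each $0:_Mx_k$ of finite length.
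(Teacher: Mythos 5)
Your proposal is correct, but a comparison with ``the paper's proof'' is slightly moot: the paper does not prove this statement at all --- its proof field reads ``See [Tr86, Theorem 2.5],'' deferring to Trung's original paper. So what you have produced is a genuinely self-contained argument where the paper offers only a citation, and it is worth saying what your route is and that it holds up. Your proof is the classical length/multiplicity induction (essentially the St\"uckrad--Vogel style argument): you take as input the inequality $\ell_R(M/\fq M)-e(\fq,M)\le\sum_{i=0}^{d-1}\binom{d-1}{i}\ell_R(H^i_{\fm}(M))$, which the paper itself records without proof in the remark following Theorem \ref{charac gCM}, and then track when equality propagates through the reduction modulo $x_1$. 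All the individual steps check out: the base case via $e((x_1),M)=\ell_R(M/x_1M)-\ell_R(0:_Mx_1)$ and $0:_Mx_1\subseteq H^0_{\fm}(M)$; the multiplicity reduction $e(\fq,M)=e(\overline{\fq},\overline{M})$, valid because $e(\overline{\fq},0:_Mx_1)=0$ when $d\ge 2$ and $0:_Mx_1$ has finite length; the filter-regular long exact sequence yielding $0\to H^i_{\fm}(M)/x_1H^i_{\fm}(M)\to H^i_{\fm}(\overline{M})\to 0:_{H^{i+1}_{\fm}(M)}x_1\to 0$ (exactness at the $H^0$ spot is fine precisely because $0:_Mx_1$ has finite length, so the image of the first map is genuinely $x_1H^0_{\fm}(M)$); the Pascal collapse with its equality analysis (legitimate since all binomial coefficients in range are positive, so total equality forces termwise equality, i.e.\ $x_1H^i_{\fm}(M)=0$ for all $i<d$); and the permutation trick to upgrade $x_1H^i_{\fm}(M)=0$ to $\fq H^i_{\fm}(M)=0$, which is justified since every system of parameters of a generalized Cohen--Macaulay module is a filter regular sequence in any order. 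By contrast, Trung's cited proof runs through the machinery of $d$-sequences and unconditioned strong $d$-sequences together with Koszul cohomology, which buys the additional equivalent characterizations the present paper actually uses elsewhere (e.g.\ the $d$-sequence facts invoked in the proofs of Proposition \ref{L2.10} and Theorem \ref{injec imply stand}); your argument buys elementarity and self-containedness, at the cost of producing only the single stated equivalence. One presentational nit: what you call the ``additivity formula'' for multiplicities is more precisely the reduction formula $e(x_1,\ldots,x_d;M)=e(x_2,\ldots,x_d;M/x_1M)-e(x_2,\ldots,x_d;0:_Mx_1)$, but your use of it is correct.
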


\begin{proof}
See \cite[Theorem 2.5]{Tr86}.
\end{proof}

We will generalize the following proposition in a more general context. The first assertion appeared in \cite[Proposition 1.4]{Su81}, and the second one is well-known in the theory of Buchsbaum rings.

\begin{proposition}
\label{length Koszul}
Let $M$ be a generalized Cohen-Macaulay module over $(R,\fm,k)$ such that $d=\dim M>0$ and let $\fq=(x_1,\ldots, x_d)$ be a parameter ideal of $M$. Then for all $s \le d$ we have
$$
\ell_R(H^i(x_1, \ldots,x_s; M)) \le \sum_{j=0}^{i} \binom{s}{i-j} \ell_R(H^j_{\fm}(M))
$$
for all $i<s$. Furthermore, $\fq$
is standard if and only if
$$
\ell_R(H^i(\fq; M)) = \sum_{j=0}^{i} \binom{d}{i-j} \ell_R(H^j_{\fm}(M))
$$
for all $i<d$.
\end{proposition}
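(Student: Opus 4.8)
The plan is to prove both assertions by induction on the number of parameters, the engine being the long exact sequence of Koszul cohomology obtained by adjoining one parameter at a time together with the Pascal identity $\binom{s-1}{i-1-j}+\binom{s-1}{i-j}=\binom{s}{i-j}$, which is precisely what produces the binomial coefficients. Since $M$ is generalized Cohen-Macaulay, the sequence $x_1,\ldots,x_s$ is filter regular (cf.\ \cite{CST78}), so $H^i(x_1,\ldots,x_s;M)$ has finite length for every $i<s$ and all the lengths below are defined. To prove (1) I would induct on $s$. Writing $\underline{x}'=x_1,\ldots,x_{s-1}$ and $N^i:=H^i(\underline{x}';M)$, the factorization $K^\bullet(x_1,\ldots,x_s;M)=K^\bullet(\underline{x}';M)\otimes K^\bullet(x_s;M)$ yields a short exact sequence of complexes whose cohomology sequence breaks into
$$
0 \to \coker(x_s\colon N^{i-1}\to N^{i-1}) \to H^i(x_1,\ldots,x_s;M) \to \ker(x_s\colon N^{i}\to N^{i}) \to 0.
$$
For $i<s-1$ both flanking modules $N^{i-1},N^{i}$ have finite length; using $\ell_R(\coker)\le \ell_R(N)$, $\ell_R(\ker)\le \ell_R(N)$, the induction hypothesis at degrees $i-1$ and $i$ for $s-1$, and Pascal's identity, one obtains the claimed bound at degree $i$.

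The main obstacle is the top degree $i=s-1$, where $N^{s-1}=M/(\underline{x}')M$ is not of finite length, so the crude estimate $\ell_R(\ker)\le \ell_R(N^{s-1})$ is vacuous. Here I would argue that, since $x_s$ is filter regular on $M/(\underline{x}')M$, the kernel $\ker(x_s\mid M/(\underline{x}')M)=\big((\underline{x}')M:_Mx_s\big)/(\underline{x}')M$ has finite length and therefore lies inside $H^0_{\fm}(M/(\underline{x}')M)$, reducing matters to the auxiliary estimate
$$
\ell_R\big(H^0_{\fm}(M/(x_1,\ldots,x_{s-1})M)\big) \le \sum_{j=0}^{s-1}\binom{s-1}{j}\ell_R\big(H^j_{\fm}(M)\big).
$$

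This auxiliary estimate is the case $i=0$ of the more general bound $\ell_R(H^i_{\fm}(M/(x_1,\ldots,x_k)M))\le \sum_{j=0}^{k}\binom{k}{j}\ell_R(H^{i+j}_{\fm}(M))$ for $i+k<d$, which I would establish by a parallel induction on $k$: the two short exact sequences coming from multiplication by $x_k$ on the (again generalized Cohen-Macaulay) quotient $M/(x_1,\ldots,x_{k-1})M$ give the single-step inequality $\ell_R(H^i_{\fm}(M_k))\le \ell_R(H^i_{\fm}(M_{k-1}))+\ell_R(H^{i+1}_{\fm}(M_{k-1}))$, and Pascal closes it. This is exactly the step where the finiteness of the lower local cohomology modules is used, and it is morally the splitting of Remark \ref{R4.4}. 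Feeding this back, together with $\binom{s-1}{j}=\binom{s-1}{s-1-j}$ and Pascal, closes the induction at $i=s-1$ and finishes (1).

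For (2) I specialize to $s=d$, so that part (1) already gives ``$\le$'' for all $i<d$. To read off the equality case, note that every inequality used in the induction has the shape $\ell_R(\coker(x\mid N))\le \ell_R(N)$ or $\ell_R(\ker(x\mid N))\le \ell_R(N)$ with all quantities nonnegative, so equality in the bound for every $i<d$ forces each of these to be an equality; for an endomorphism of a finite length module this occurs precisely when the parameter annihilates the module. Pushing these vanishing conditions through the local cohomology long exact sequences converts them into the annihilation conditions $\fq\, H^i_{\fm}(M/(x_1,\ldots,x_j)M)=0$ for all $i+j<d$, which by Theorem \ref{charac standard} are exactly the standardness of $\fq$; conversely, under those conditions the same maps vanish and the bound is attained with equality. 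The delicate bookkeeping here, namely matching the equality conditions of the Koszul induction with Trung's local-cohomology annihilation conditions, is the principal difficulty of this second part.
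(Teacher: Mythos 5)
Your part (1) is sound and follows essentially the paper's own argument (given there as Proposition \ref{L2.10}, in the more general finiteness-dimension setting): the same induction on $s$ via the Koszul long exact sequence with Pascal's identity, the same treatment of the top degree $i=s-1$ via the inclusion $\big((x_1,\ldots,x_{s-1})M:_M x_s\big)/(x_1,\ldots,x_{s-1})M \subseteq H^0_{\fm}(M/(x_1,\ldots,x_{s-1})M)$, and your auxiliary estimate $\ell_R(H^i_{\fm}(M/(x_1,\ldots,x_k)M))\le \sum_{j=0}^{k}\binom{k}{j}\ell_R(H^{i+j}_{\fm}(M))$ is exactly the first half of the paper's Lemma \ref{length lc}, proved there by the same two short exact sequences. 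Your converse direction in (2) (equality $\Rightarrow$ standard) is also workable: as you note, equality in degree $i=d-1$ pins the colon length down to $\sum_{j=0}^{d-1}\binom{d-1}{j}\ell_R(H^j_{\fm}(M))$, and converting that single equality into standardness is precisely the second half of Lemma \ref{length lc}, which the paper establishes by a separate induction on $M/x_1M$ and a lifting through the resulting split local cohomology sequences --- nontrivial, but it is the ``bookkeeping'' you flagged, and your plan for it is consistent with the paper's.

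The genuine gap is in the forward direction (standard $\Rightarrow$ equality). You dispose of it with ``under those conditions the same maps vanish,'' but the maps that must vanish are multiplication by $x_s$ on the Koszul cohomology modules $H^i(x_1,\ldots,x_{s-1};M)$ for $i<s-1$, whereas the standardness hypothesis (Theorem \ref{charac standard}) is an annihilation statement about the \emph{local} cohomology of the quotients $M/(x_1,\ldots,x_j)M$. There is no formal long-exact-sequence chase transferring the latter into the former, and your equality-case mechanism cannot supply it either: in this direction you must \emph{prove} the multiplication maps are zero, not deduce vanishing from an assumed equality of lengths, so the chain of inequalities never closes to an equality. The paper imports this input from outside the induction: a standard sequence is a $d$-sequence by \cite[Remark 2.8]{Tr86}, and $d$-sequences annihilate the lower Koszul cohomology, i.e.\ $x_s\,H^i(x_1,\ldots,x_{s-1};M)=0$ for $i<s-1$, by \cite[Section 5]{HSV82}; it also uses $(x_1,\ldots,x_{s-1})M:_M x_s=\bigcup_{n\ge 1}(x_1,\ldots,x_{s-1})M:_M\fm^n$ to identify the top colon quotient with all of $H^0_{\fm}(M/(x_1,\ldots,x_{s-1})M)$, which is what turns the top-degree inequality into an equality. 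Without these facts (or a substitute argument for them), the ``only if'' half of your proof of the second assertion does not close.
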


The advantage of using characterizations of generalized Cohen-Macaulay modules and standard parameter ideals via local cohomology as in Theorem \ref{charac gCM} and Theorem \ref{charac standard} is that it allows us to consider certain problems in a more general context.

\begin{definition}(cf. \cite[Definition 9.1.3]{BS98})
\label{nCM3}
Let $M$ be a finitely generated module over a local ring $(R,\fm,k)$. The \textit{finiteness dimension $f_{\fm}(M)$ of $M$ with respect to} $\fm$ is defined as follows:
$$
f_{\fm}(M):=\mathrm{inf} \{i \,|\, H^i_{\fm}(M)~\text{is not finitely generated} \} \in \mathbb{Z}_{\ge 0} \cup \{\infty\}.
$$
\end{definition}

\begin{remark}
Assume that $\dim M=0$ or $M=0$ (recall that a trivial module has dimension $-1$). In this case, $H^i_{\fm}(M)$ is finitely generated for all $i$ and $f_{\fm}(M)$ is equal to $\infty$. It will be essential to know when the finiteness dimension is a positive integer. We mention the following result. Let $(R,\fm,k)$ be a local ring and let $M$ be a finitely generated $R$-module. If $d=\dim M >0$, then the local cohomology module $H^d_{\fm}(M)$ is not finitely generated. For the proof of this result, see \cite[Corollary 7.3.3]{BS98}.
\end{remark}

\begin{definition}
\label{nCM4}
Let $M$ be a finitely generated module over a local ring $(R,\fm,k)$ such that $t=f_{\fm}(M)<\infty$ and let $x_1,\ldots, x_s$, $s \le t$, be a filter regular sequence on $M$. Then we say that $x_1,\ldots, x_s$ is a \textit{standard sequence} of $M$ if
$$
(x_1,\ldots, x_s) \, H^i_{\fm}\Big(\dfrac{M}{(x_1,\ldots,x_j)M}\Big) = 0
$$
for all $i+j<s$.
\end{definition}

\begin{remark}\label{R5.10}
Let the notation be as in Definition \ref{nCM3}.
\begin{enumerate}
\item
Let $M$ be a finitely generated module over $(R,\fm,k)$ such that $d=\dim M>0$. Then $M$ is generalized Cohen-Macaulay if and only if $f_{\fm}(M) = d$.

\item
Assume that $f_{\fm}(M)<\infty$. By Grothendieck's finiteness theorem, we have
$$
f_{\fm}(M)=\min \{\depth(M_{\fp}) + \dim R/\fp \, |\, \fp \neq \fm \},
$$
provided that $R$ is a homomorphic image of a regular local ring (cf. \cite[Theorem 9.5.2]{BS98}).

\item By \cite[Lemma 2.9]{Q13a}, if $x_1, \ldots, x_s$ with $s \le f_{\fm}(M)$ is a filter regular sequence on $M$, then it is a filter regular sequence on $M$ in any order.
\end{enumerate}
\end{remark}

The following is useful in the sequel.

\begin{lemma}\label{length lc}
Let $M$ be a finitely generated module over $(R,\fm,k)$. Let $t=f_{\fm}(M)<\infty$ and let $x_1, \ldots, x_s$ with $s \le t$ be a filter regular sequence on $M$. Then for all $i+j < s$ we have
$$
\ell_R(H^i_{\fm}(M/(x_1, \ldots, x_j)M)) \le \sum_{k = i}^{i+j} \binom{j}{k-i} \ell_R(H^k_{\fm}(M)) \quad (\star).
$$
Moreover, $x_1, \ldots, x_s$ is a standard sequence of $M$ if and only if
$$
\ell_R \big(\dfrac{(x_1, \ldots, x_{s-1})M:_M x_s}{(x_1, \ldots, x_{s-1})M}\big)=
\sum_{k = 0}^{s-1} \binom{s-1}{k} \ell_R(H^k_{\fm}(M)).
$$
In this case, the above inequalities $(\star)$ become equalities.
\end{lemma}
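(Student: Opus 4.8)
My plan is to derive the entire statement from a single exact sequence attached to a filter regular element, propagated through an induction governed by Pascal's rule. The engine is the following. If $x$ is a filter regular element on a finitely generated module $P$, then $0:_P x$ has finite length, so splitting the multiplication map $P \xrightarrow{x} P$ into $0 \to 0:_P x \to P \to P/(0:_P x) \to 0$ and $0 \to P/(0:_P x) \cong xP \to P \to P/xP \to 0$ and applying $H^\bullet_{\fm}(-)$ produces, for every $i$ for which $H^i_{\fm}(P)$ and $H^{i+1}_{\fm}(P)$ have finite length, a short exact sequence
$$
0 \to \coker\big(x\colon H^i_{\fm}(P) \to H^i_{\fm}(P)\big) \to H^i_{\fm}(P/xP) \to \ker\big(x\colon H^{i+1}_{\fm}(P) \to H^{i+1}_{\fm}(P)\big) \to 0.
$$
In particular $\ell_R(H^i_{\fm}(P/xP)) \le \ell_R(H^i_{\fm}(P)) + \ell_R(H^{i+1}_{\fm}(P))$, with equality exactly when $x H^i_{\fm}(P)=0$ and $x H^{i+1}_{\fm}(P)=0$. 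Applying this with $P=M/(x_1,\ldots,x_{j-1})M$ and $x=x_j$ (a filter regular element there, since a filter regular sequence of length $\le f_{\fm}(M)$ is one in any order by Remark~\ref{R5.10}), the inequality $(\star)$ follows by induction on $j$: the two inductive bounds add up via $\binom{j-1}{k-i}+\binom{j-1}{k-i-1}=\binom{j}{k-i}$ to the bound claimed at level $j$. Because $s\le t=f_{\fm}(M)$, every cohomology appearing in the range $i+j<s$ has finite length, so the induction never leaves the finite-length regime.

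For the standardness criterion I would write $N=M/(x_1,\ldots,x_{s-1})M$, so the displayed quotient is precisely $0:_N x_s$. Since $x_s$ is filter regular on $N$, this submodule has finite length and therefore lies inside $H^0_{\fm}(N)$; moreover $0:_N x_s = H^0_{\fm}(N)$ if and only if $x_s H^0_{\fm}(N)=0$. Combining this with the case $(i,j)=(0,s-1)$ of $(\star)$ gives
$$
\ell_R(0:_N x_s) \le \ell_R(H^0_{\fm}(N)) \le \sum_{k=0}^{s-1}\binom{s-1}{k}\ell_R(H^k_{\fm}(M)),
$$
so the asserted equation amounts to equality in both places, i.e. to $x_s H^0_{\fm}(N)=0$ together with equality in $(\star)$ at $(0,s-1)$. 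The latter forces equality in $(\star)$ everywhere in the range $i+j<s$: writing $a^{(j)}_i$ for the true lengths $\ell_R(H^i_{\fm}(M/(x_1,\ldots,x_j)M))$ and $b^{(j)}_i$ for the binomial bounds, the recursion $a^{(j)}_i \le a^{(j-1)}_i + a^{(j-1)}_{i+1} \le b^{(j-1)}_i + b^{(j-1)}_{i+1} = b^{(j)}_i$ shows that equality at the apex $(0,s-1)$ forces equality at every pair feeding into it, and these are exactly the pairs with $i+j<s$ because all binomial coefficients involved are strictly positive. This already yields the final sentence of the lemma.

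It remains to match these equalities with the definition of a standard sequence, $(x_1,\ldots,x_s)H^i_{\fm}(M/(x_1,\ldots,x_j)M)=0$ for all $i+j<s$. One direction is routine: if the sequence is standard then $x_j H^i_{\fm}(M/(x_1,\ldots,x_{j-1})M)=0$ for the relevant indices (as $x_j\in(x_1,\ldots,x_s)$), so the single-element criterion makes every step of $(\star)$ an equality, and in particular the length equation holds. The reverse direction is where the real work lies, and I expect it to be the main obstacle. The single-element criterion turns the $(\star)$-equalities into the diagonal vanishing $x_{j+1}H^i_{\fm}(M/(x_1,\ldots,x_j)M)=0$ for $i+j<s$; combined with the trivial relations $x_l H^i_{\fm}(M/(x_1,\ldots,x_j)M)=0$ for $l\le j$, this recovers all of standardness \emph{except} the off-diagonal conditions with $l>j+1$. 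A direct transport of the diagonal vanishing along the long exact sequences relating $M/(x_1,\ldots,x_j)M$ and $M/(x_1,\ldots,x_{j+1})M$ loses one cohomological degree and so fails at the boundary $i+j=s-1$. The clean route is instead to reorder the sequence so that $x_l$ occupies position $j+1$ --- legitimate by Remark~\ref{R5.10} --- and read the missing vanishing off as a diagonal condition for the reordered sequence; the crux is therefore to verify that the equality conditions, equivalently standardness itself, are independent of the chosen order of the filter regular sequence.
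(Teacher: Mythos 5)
Your inequality $(\star)$, the ``in this case equalities'' statement, and the forward implication are all correct, and your engine is the same as the paper's (the paper runs the long exact sequence of $0 \to M/(0:_Mx_1) \to M \to M/x_1M \to 0$; your $\ker$/$\coker$ refinement is the same computation). The genuine gap is in the reverse implication, and you have flagged it yourself: you correctly reduce the length equality to the diagonal vanishing $x_{j+1}H^i_{\fm}(M/(x_1,\ldots,x_j)M)=0$ for $i+j<s$, observe that the off-diagonal conditions $x_l\,H^i_{\fm}(M/(x_1,\ldots,x_j)M)=0$ with $l>j+1$ are still missing, and then propose to obtain them by permuting the sequence --- but the permutation invariance you call ``the crux'' is left unproved, and it is not harmless. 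Your forced node equalities concern the quotients $M/(x_1,\ldots,x_j)M$ in the given order; it is not at all clear that the colon length for a permuted sequence satisfies the same hypothesis, and in the paper the permutation invariance of standardness (Corollary \ref{permutation standard}) is deduced \emph{from} Lemma \ref{length lc} via the Koszul characterization of Proposition \ref{L2.10}, so invoking it here would be circular. As written, the proposal does not prove the ``if'' direction.

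The paper closes this direction without any reordering, by induction on $s$ applied to $\overline{M}:=M/x_1M$ and the sequence $x_2,\ldots,x_s$. Two observations make this work. First, the colon quotient in the hypothesis is literally the same module computed over $\overline{M}$, and your own forced equalities $\ell_R(H^k_{\fm}(\overline{M}))=\ell_R(H^k_{\fm}(M))+\ell_R(H^{k+1}_{\fm}(M))$ (for $k<s-1$) convert the hypothesis over $M$ into the hypothesis over $\overline{M}$; the inductive hypothesis then yields \emph{full} standardness of $x_2,\ldots,x_s$ on $\overline{M}$, off-diagonal conditions included, so $(x_1,\ldots,x_s)H^i_{\fm}(M/(x_1,\ldots,x_j)M)=0$ for all $i+j<s$ with $j\ge 1$ (note $x_1$ kills these modules for trivial reasons). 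Second, the remaining layer $j=0$ is recovered by transporting in the direction your proposal dismissed: the equalities split the long exact sequence into $0\to H^i_{\fm}(M)\to H^i_{\fm}(\overline{M})\to H^{i+1}_{\fm}(M)\to 0$ for $i<s-1$, and killing the middle term kills both the submodule $H^i_{\fm}(M)$ ($i<s-1$) and the quotient $H^{i+1}_{\fm}(M)$ ($i+1\le s-1$), which together cover all $i<s$ --- the quotient term \emph{gains} a cohomological degree, so nothing is lost at the boundary $i+j=s-1$, contrary to your concern. The missing idea in your proposal is precisely this induction on $s$ through $M/x_1M$; with it, the order-invariance question never arises (and is instead a downstream corollary).
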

\begin{proof} The short exact sequence
$$0 \to M/(0:_Mx_1) \to M \to M/x_1M \to 0$$
induces the exact sequence of local cohomology
$$
\cdots \to H^{k}_{\fm}(M) \to H^{k}_{\fm}(M/x_1M) \to H^{k+1}_{\fm}(M) \overset{x_1}{\to} H^{k+1}_{\fm}(M) \to \cdots.
$$
Therefore, for all $k<s-1$ we have
$$
\ell_R(H^{k}_{\fm}(M/x_1M)) \le \ell_R(H^{k}_{\fm}(M)) + \ell_R(H^{k+1}_{\fm}(M)),
$$
and the equality occurs if $x_1 \, H^k_{\fm}(M) = 0$ for all $k<s$. By induction we obtain the inequality
$$
\ell_R(H^i_{\fm}(M/(x_1, \ldots, x_j)M)) \le \sum_{k = i}^{i+j} \binom{j}{k-i} \ell_R(H^k_{\fm}(M))
$$
for all $i+j < s$, and the equality occurs if $x_1, \ldots, x_s$ is standard.\\
Conversely, suppose
$$
\ell_R \big(\dfrac{(x_1, \ldots, x_{s-1})M:_M x_s}{(x_1, \ldots, x_{s-1})M}\big)=
\sum_{k = 0}^{s-1} \binom{s-1}{k} \ell_R(H^k_{\fm}(M)).
$$
We prove $x_1, \ldots, x_s$ is standard by induction on $s$. If $s =1$, we have $0:_M x_1 = H^0_{\fm}(M)$. Hence $x_1 \, H^0_{\fm}(M) = 0$, and so $x_1$ is a standard element. For $s>1$, as above
$$
\ell_R \big(\dfrac{(x_1, \ldots, x_{s-1})M:_M x_s}{(x_1, \ldots, x_{s-1})M}\big) \le
\sum_{k = 0}^{s-2} \binom{s-2}{k} \ell_R(H^k_{\fm}(M/x_1M)).
$$
On the other hand, since $\ell (H^{k}_{\fm}(M/x_1M)) \le \ell(H^{k}_{\fm}(M)) + \ell(H^{k+1}_{\fm}(M))$ for all $s<k-1$ we have
$$\sum_{k = 0}^{s-2} \binom{s-2}{k} \ell_R(H^k_{\fm}(M/x_1M)) \le \sum_{k = 0}^{s-1} \binom{s-1}{k} \ell_R(H^k_{\fm}(M)).$$
Therefore, all inequalities become equalities. Hence $x_2, \ldots, x_s$ is a standard sequence of $M/x_1M$ by the inductive hypothesis, that is $(x_1, \ldots, x_s)\, H^i_{\fm}(M/(x_1, \ldots,x_j)M) = 0$ for all $i+j <s$ and $j \ge 1$. We need only to show that $(x_1, \ldots, x_s)\, H^i_{\fm}(M) = 0$ for all $i<s$. It follows from the short exact sequence
$$0 \to H^{i}_{\fm}(M) \to H^{i}_{\fm}(M/x_1M) \to H^{i+1}_{\fm}(M) \to 0$$
for all $i<s-1$, and the fact $(x_1, \ldots, x_s)\, H^i_{\fm}(M/x_1M) = 0$ for all $i<s-1$. The proof is complete.
\end{proof}

The following proposition is the reformulation of Proposition \ref{length Koszul} in our context. For convenience, we provide a detailed proof.

\begin{proposition}
\label{L2.10}
Let $M$ be a finitely generated module over $(R,\fm,k)$. Let $t=f_{\fm}(M)<\infty$ and let $x_1, \ldots, x_s$ with $s \le t$ be a filter regular sequence on $M$. Then
$$
\ell_R(H^i(x_1, \ldots, x_s; M)) \le \sum_{j=0}^{i} \binom{s}{i-j} \ell_R(H^j_{\fm}(M))
$$
for all $i<s$. Furthermore, $x_1, \ldots, x_s$ is a standard sequence of $M$ if and only if the above inequalities become equalities for all $i<s$.
\end{proposition}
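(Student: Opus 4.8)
The plan is to induct on $s$, peeling off the last element $x_s$ through the long exact sequence of Koszul cohomology. Write $\underline{x}=x_1,\ldots,x_s$ and $\underline{x}'=x_1,\ldots,x_{s-1}$, and abbreviate $\beta_i(r)=\sum_{j=0}^{i}\binom{r}{i-j}\ell_R(H^j_\fm(M))$, so that the asserted bound is $\ell_R(H^i(\underline{x};M))\le\beta_i(s)$. The base case $s=1$ is the computation $H^0(x_1;M)=0:_Mx_1\subseteq H^0_\fm(M)$, with equality exactly when $x_1H^0_\fm(M)=0$, i.e. when $x_1$ is standard. For the inductive step, the long exact sequence attached to $x_s$ splits into short exact sequences
$$
0 \to \coker\big(x_s\mid H^{i-1}(\underline{x}';M)\big)\to H^i(\underline{x};M)\to \ker\big(x_s\mid H^i(\underline{x}';M)\big)\to 0
$$
for every $i<s$. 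Since $\underline{x}'$ is again a filter regular sequence of length $s-1\le t$ (cf. Lemma \ref{filter}), the modules $H^i(\underline{x}';M)$ have finite length for $i<s-1$, while for $i=s-1$ the module $H^{s-1}(\underline{x}';M)=M/(\underline{x}')M$ is not of finite length but its $x_s$-torsion $\ker(x_s\mid\cdot)=\big((\underline{x}')M:_Mx_s\big)/(\underline{x}')M$ is. Setting $a_i:=\ell_R\big(\ker(x_s\mid H^i(\underline{x}';M))\big)$ and using $\ell_R(N/x_sN)=\ell_R(0:_Nx_s)$ for finite length $N$, the displayed sequence gives the recursion $\ell_R(H^i(\underline{x};M))=a_{i-1}+a_i$ for all $i<s$, with $a_{-1}=0$.

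The inequality then follows by bounding the $a_i$. For $i<s-1$ we have $a_i\le \ell_R(H^i(\underline{x}';M))\le \beta_i(s-1)$ by the inductive hypothesis, while the top term $a_{s-1}$ lies in $H^0_\fm(M/(\underline{x}')M)$, whose length is bounded by $\sum_{k=0}^{s-1}\binom{s-1}{k}\ell_R(H^k_\fm(M))=\beta_{s-1}(s-1)$ via the inequality $(\star)$ of Lemma \ref{length lc} (taken with $i=0$, $j=s-1$, and the symmetry $\binom{s-1}{s-1-j}=\binom{s-1}{j}$). Thus $a_i\le\beta_i(s-1)$ for all $i\le s-1$, and the recursion together with Pascal's rule $\beta_{i-1}(s-1)+\beta_i(s-1)=\beta_i(s)$ yields $\ell_R(H^i(\underline{x};M))\le\beta_i(s)$.

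The implication ``equalities $\Rightarrow$ standard'' is immediate and uses only the top index: equality at $i=s-1$ reads $a_{s-2}+a_{s-1}=\beta_{s-2}(s-1)+\beta_{s-1}(s-1)$, and since each summand is already bounded above by the corresponding $\beta$, both bounds are attained; in particular $a_{s-1}=\ell_R\big(((\underline{x}')M:_Mx_s)/(\underline{x}')M\big)=\sum_{k=0}^{s-1}\binom{s-1}{k}\ell_R(H^k_\fm(M))$, which is exactly the numerical criterion of Lemma \ref{length lc} for $\underline{x}$ to be standard.

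For the converse, suppose $\underline{x}$ is standard. Then so is $\underline{x}'$ (its defining vanishing is a special case of that for $\underline{x}$), whence $\ell_R(H^i(\underline{x}';M))=\beta_i(s-1)$ for $i<s-1$ by induction, and $a_{s-1}=\beta_{s-1}(s-1)$ by the equality half of Lemma \ref{length lc}. To close the recursion into equalities $\ell_R(H^i(\underline{x};M))=\beta_i(s)$ it remains to prove $a_i=\ell_R(H^i(\underline{x}';M))$ for $i<s-1$, equivalently that $x_s$ annihilates each $H^i(\underline{x}';M)$ with $i<s-1$. This annihilation is the main obstacle. The naive estimate — that $x_s$ kills the two outer terms of each short exact sequence and hence the middle — only yields annihilation by a \emph{power} of $x_s$, and the same defect reappears through the spectral sequence of the composite functor, whose graded pieces $H^q(\underline{x}';H^p_\fm(M))$ are each killed by $x_s$ but assemble only up to a power. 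To obtain annihilation by $x_s$ itself I would instead follow the classical generalized Cohen--Macaulay argument behind Proposition \ref{length Koszul}: reduce modulo $x_1$, noting that standardness descends to $x_2,\ldots,x_s$ on $M/x_1M$ and that the short exact sequences $0\to H^j_\fm(M)\to H^j_\fm(M/x_1M)\to H^{j+1}_\fm(M)\to 0$ arising in the proof of Lemma \ref{length lc} allow one to build, by induction on $s$, a module decomposition $H^i(\underline{x}';M)\cong\bigoplus_{j=0}^{i}H^j_\fm(M)^{\binom{s-1}{i-j}}$ (up to the controlled contribution of $0:_Mx_1=H^0_\fm(M)$). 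Since every $H^j_\fm(M)$ with $j<s$ is killed by $x_s$ by standardness, this decomposition makes the required annihilation manifest, and the recursion then delivers equality for all $i<s$.
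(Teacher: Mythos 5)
Your treatment of the inequality and of the implication ``equalities $\Rightarrow$ standard'' is correct and is essentially the paper's own argument: the same induction on $s$ via the Koszul long exact sequence attached to $x_s$, the same appeal to Lemma \ref{length lc} to bound $\ell_R\big(((x_1,\ldots,x_{s-1})M:_Mx_s)/(x_1,\ldots,x_{s-1})M\big)$ by $\sum_{k=0}^{s-1}\binom{s-1}{k}\ell_R(H^k_{\fm}(M))$, and the same extraction of standardness from equality at the top index $i=s-1$ through the numerical criterion of Lemma \ref{length lc}. Your bookkeeping $\ell_R(H^i(x_1,\ldots,x_s;M))=a_{i-1}+a_i$, using $\ell_R(N/x_sN)=\ell_R(0:_Nx_s)$ for finite length $N$, is a clean repackaging of the paper's inequalities, and your identification of $a_{s-1}$ via the equality half of Lemma \ref{length lc} in the forward direction is fine.

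The gap is in the step you yourself flagged as the main obstacle in ``standard $\Rightarrow$ equalities'': proving $x_s\,H^i(x_1,\ldots,x_{s-1};M)=0$ for $i<s-1$, rather than annihilation by a power of $x_s$. Your proposed repair does not close it. The short exact sequences $0\to H^j_{\fm}(M)\to H^j_{\fm}(M/x_1M)\to H^{j+1}_{\fm}(M)\to 0$ are not known to split for an arbitrary standard element --- the splitting property in this paper (Remark \ref{R4.4}, after \cite{CQ11}) requires the parameter element to lie in $\fm^{2n}$ --- and non-split exact sequences can only produce a \emph{filtration} of $H^i(x_1,\ldots,x_{s-1};M)$ whose graded pieces are killed by $x_s$, which is precisely the ``power of $x_s$'' defect you had already diagnosed in the spectral sequence argument. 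So the asserted direct sum decomposition $H^i(x_1,\ldots,x_{s-1};M)\cong\bigoplus_{j=0}^{i}H^j_{\fm}(M)^{\binom{s-1}{i-j}}$ is exactly the extension problem at issue; your sketch asserts it rather than proves it, and it is not implied by the exact sequences you invoke. The paper closes this step by a different and concrete route: a standard sequence is a $d$-sequence by \cite[Remark 2.8]{Tr86}, and for a $d$-sequence one has $x_s\,H^i(x_1,\ldots,x_{s-1};M)=0$ for all $i<s-1$ by \cite[Section 5]{HSV82} (together with $x_s\,H^0_{\fm}(M/(x_1,\ldots,x_{s-1})M)=0$, which follows directly from standardness). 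Substituting this citation for your decomposition step turns your recursion into the desired equalities and completes the proof.
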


\begin{proof}
We prove the first assertion by induction on $s$. The case $s=1$ is obvious. For $s>1$ we have the following exact sequence of Koszul cohomology
$$
\cdots \to H^{i-1}(x_1, \ldots, x_{s-1};M) \overset{\pm x_s}{ \to } H^{i-1}(x_1, \ldots, x_{s-1};M) \to H^{i}(x_1, \ldots, x_{s};M) \to H^{i}(x_1, \ldots, x_{s-1};M) \overset{\pm x_s}{ \to } \cdots.
$$
Hence for all $i<s-1$ we have
$$\ell_R(H^{i}(x_1, \ldots, x_{s};M)) \le \ell_R(H^{i-1}(x_1, \ldots, x_{s-1};M)) + \ell_R(H^{i}(x_1, \ldots, x_{s-1};M)),$$
and for $i=s-1$ we have
\begin{eqnarray*}
\ell_R(H^{s-1}(x_1, \ldots, x_{s};M)) &\le  & \ell_R(H^{s-2}(x_1, \ldots, x_{s-1};M)) + \ell_R \big(\dfrac{(x_1, \ldots, x_{s-1})M:_M x_s}{(x_1, \ldots, x_{s-1})M} \big)\\
&\le & \ell_R(H^{s-2}(x_1, \ldots, x_{s-1};M)) + \ell_R(H^0_{\fm}(M/(x_1, \ldots, x_{s-1})M)).
\end{eqnarray*}
By induction and Lemma \ref{length lc}, we can check that
$$
\ell_R(H^i(x_1, \ldots,x_s; M)) \le \sum_{j=0}^{i} \binom{s}{i-j} \ell_R(H^j_{\fm}(M))
$$
for all $i<s$.

For the second assertion, suppose that $x_1, \ldots, x_s$ is a standard sequence of $M$. For all $j \le s$, since $(x_1, \ldots, x_s)H^0_{\fm}(M/(x_1, \ldots, x_{j-1})M) = 0$, we have
$$
(x_1, \ldots, x_{j-1})M :_M x_j=\bigcup_{n \ge 1} (x_1, \ldots, x_{j-1})M :_M \fm^n.
$$
By \cite[Remark 2.8]{Tr86}, we have $x_1,\ldots,x_s$ is a $d$-sequence and thus,
$x_s H^i(x_1, \ldots, x_{s-1}; M) = 0$ for all $i< s-1$ (cf. \cite[Section 5]{HSV82}). Therefore, the above long exact sequence of Koszul cohomology yields
$$
\ell_R(H^{i}(x_1, \ldots, x_{s};M))=\ell_R(H^{i-1}(x_1, \ldots, x_{s-1};M)) + \ell_R(H^{i}(x_1, \ldots, x_{s-1};M)),
$$
for all $i < s-1$, and
\begin{eqnarray*}
\ell_R(H^{s-1}(x_1, \ldots, x_{s};M)) &=& \ell_R(H^{s-2}(x_1, \ldots, x_{s-1};M)) + \ell_R\big(\dfrac{(x_1, \ldots, x_{s-1})M:_M x_s}{(x_1, \ldots, x_{s-1})M} \big)\\
&= & \ell_R(H^{s-2}(x_1, \ldots, x_{s-1};M)) + \ell_R(H^0_{\fm}(M/(x_1,\ldots,x_{s-1})M))
\end{eqnarray*}
where we used the fact $x_s \, H^0_{\fm}(M/(x_1, \ldots, x_{s-1})M) = 0$ for the second equation. By induction and Lemma \ref{length lc}, we have
$$
\ell_R(H^i(x_1, \ldots,x_s; M)) = \sum_{j=0}^{i} \binom{s}{i-j} \ell_R(H^j_{\fm}(M))
$$
for all $i<s \le t$. Conversely, suppose
$$
\ell_R(H^i(x_1, \ldots,x_s; M)) = \sum_{j=0}^{i} \binom{s}{i-j} \ell_R(H^j_{\fm}(M))$$
for all $i<s$. As above we see that if
$$
\ell_R(H^{s-1}(x_1, \ldots,x_s; M)) = \sum_{j=0}^{s-1} \binom{s}{s-j-1} \ell_R(H^j_{\fm}(M)),
$$
then
$$
\ell_R \big(\dfrac{(x_1, \ldots, x_{s-1})M:_M x_s}{(x_1, \ldots, x_{s-1})M}\big) = \sum_{j = 0}^{s-1} \binom{s-1}{j} \ell_R(H^j_{\fm}(M)).
$$
Hence $x_1, \ldots,x_s$ is a standard sequence by Lemma \ref{length lc}. The proof is complete.
\end{proof}

\begin{corollary}\label{permutation standard}
  Let $M$ be a finitely generated module over $(R,\fm,k)$. Let $t=f_{\fm}(M)<\infty$, let $x_1, \ldots, x_s$, $s \le t$, be a filter regular sequence on $M$. Suppose $x_1, \ldots, x_s$ is a standard sequence. Then any permutation of $x_1, \ldots, x_s$ is also a standard sequence.
\end{corollary}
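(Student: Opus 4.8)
The plan is to reduce the whole statement to the Koszul-cohomological characterization of standard sequences given in Proposition \ref{L2.10}, and then exploit the fact that Koszul cohomology is insensitive to the order of the elements. This is the right route because the defining condition in Definition \ref{nCM4} involves the quotients $M/(x_1,\ldots,x_j)M$, which genuinely depend on the ordering, whereas the equivalent condition in Proposition \ref{L2.10} is phrased via $H^i(x_1,\ldots,x_s;M)$ and the order-free quantities $\ell_R(H^j_{\fm}(M))$.

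First I would fix an arbitrary permutation $\sigma$ of $\{1,\ldots,s\}$ and verify that $x_{\sigma(1)},\ldots,x_{\sigma(s)}$ is again a filter regular sequence on $M$. Since $s \le t = f_{\fm}(M)$, this is precisely the content of Remark \ref{R5.10}(3), so the hypotheses needed to invoke Proposition \ref{L2.10} are satisfied by the permuted sequence as well. The key observation is then that the Koszul complex $K^\bullet(x_1,\ldots,x_s;M)$ is, up to isomorphism of complexes, independent of the order in which the elements are listed. Hence
$$
H^i(x_{\sigma(1)},\ldots,x_{\sigma(s)};M) \cong H^i(x_1,\ldots,x_s;M)
$$
for every $i$, so in particular these modules share the same length. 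Moreover, the right-hand side of the equality in Proposition \ref{L2.10}, namely $\sum_{j=0}^{i} \binom{s}{i-j}\,\ell_R(H^j_{\fm}(M))$, manifestly does not involve the ordering of the $x_i$.

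Putting these together: since $x_1,\ldots,x_s$ is standard, the forward direction of Proposition \ref{L2.10} gives $\ell_R(H^i(x_1,\ldots,x_s;M)) = \sum_{j=0}^{i} \binom{s}{i-j}\,\ell_R(H^j_{\fm}(M))$ for all $i<s$. By the permutation invariance of both sides, the identical equalities hold for $x_{\sigma(1)},\ldots,x_{\sigma(s)}$, and applying the converse direction of Proposition \ref{L2.10} to that (filter regular) sequence yields that it is standard. I do not expect a genuine obstacle here; the corollary is essentially a formal consequence of Proposition \ref{L2.10}, the only point worth recording carefully being the order-independence of Koszul cohomology together with the applicability of Proposition \ref{L2.10} to every permutation, which is guaranteed by Remark \ref{R5.10}(3).
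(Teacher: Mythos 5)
Your proof is correct and is essentially the paper's own argument: the paper likewise invokes Remark \ref{R5.10}(3) to see that the permuted sequence is still filter regular, and then concludes from the order-independence of Koszul cohomology via the characterization in Proposition \ref{L2.10}. Your write-up simply makes explicit the two-directional use of Proposition \ref{L2.10} that the paper's one-line proof leaves implicit.
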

\begin{proof} Any permutation of $x_1, \ldots, x_s$ is a filter regular sequence by Remark \ref{R5.10} (3). The assertion follows from the fact Koszul cohomology is not depend on the order of sequence of elements.
\end{proof}

The following lemma is inspired by \cite[Proposition 3.2]{Tr86} in our context.

\begin{lemma}
\label{standardcri}
Let $M$ be a finitely generated module over $(R,\fm,k)$ and let $\fa \subseteq R$ be an $\fm$-primary ideal. Let $s \le f_{\fm}(M)$ be an integer. Then the following are equivalent.

\begin{enumerate}
\item
Every filter regular sequence of length $s$ in $\fa$ is standard on $M$.

\item
The ideal $\fa$ has a set of generators $S$ such that for every subset of $S$ with cardinality equal to $s$ forms a filter regular sequence on $M$ and this sequence is standard.
\end{enumerate}
\end{lemma}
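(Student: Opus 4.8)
The plan is to dispose of $(1)\Rightarrow(2)$ by prime avoidance and to attack the substantive implication $(2)\Rightarrow(1)$ by a double induction, the engine of which is the \emph{linearity} of a single $H^0$-annihilation condition. For $(1)\Rightarrow(2)$, after a standard faithfully flat reduction to the case of an infinite residue field (which leaves $f_\fm(M)$, filter regularity, lengths of local cohomology, and hence standardness unchanged), I would invoke the existence result for filter regular sequences cited after the definition (\cite[Remark 4.5]{Q13}) to choose a generating set $S$ of $\fa$ in sufficiently general position that every $s$-element subset of $S$ is a filter regular sequence on $M$; hypothesis $(1)$ then makes each such subset standard, which is precisely $(2)$.

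For $(2)\Rightarrow(1)$, the key mechanism is the following consequence of Lemma \ref{length lc}. Let $u_1,\dots,u_{s-1}$ be a \emph{standard} filter regular sequence in $\fa$ and set $M''=M/(u_1,\dots,u_{s-1})M$. The equality case of Lemma \ref{length lc} gives $\ell_R(H^0_\fm(M''))=\sum_{k=0}^{s-1}\binom{s-1}{k}\ell_R(H^k_\fm(M))=:B$, and the ``moreover'' part of the same lemma shows that, for $v\in\fa$ completing the $u_i$ to a filter regular sequence, the sequence $u_1,\dots,u_{s-1},v$ is standard if and only if $\ell_R(0:_{M''}v)=B$, i.e. if and only if $vH^0_\fm(M'')=0$. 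Call $u_1,\dots,u_{s-1}$ \emph{saturated} if in fact $\fa\,H^0_\fm(M'')=0$. Since $v\mapsto vH^0_\fm(M'')$ is $R$-linear, saturation can be tested on any generating set of $\fa$. The role of hypothesis $(2)$ is to supply the base case: an $(s-1)$-subset of $S$ is standard (a subset of a standard sequence is standard, by Definition \ref{nCM4} and Corollary \ref{permutation standard}), and completing it by each remaining generator of $S$---every completion being standard by $(2)$---the criterion above yields $a\,H^0_\fm(M'')=0$ on a generating set, i.e. saturation of every $(s-1)$-subset of $S$.

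I would then run a main induction on $s$ (for the fixed $M$ and $\fa$; note that $(2)$ at level $s$ forces $(2)$ at level $s-1$ by subset-of-standard), with statement $T(s)$: ``every filter regular sequence of length $s$ in $\fa$ is standard''. Granting $T(s-1)$, the prefix $z_1,\dots,z_{s-1}$ of a length-$s$ filter regular sequence is standard, so by the displayed criterion $z_1,\dots,z_s$ is standard the moment $z_1,\dots,z_{s-1}$ is saturated; thus $T(s)$ reduces to the claim that \emph{every} standard sequence of length $s-1$ in $\fa$ is saturated. This I prove by an inner induction on the number $r$ of entries of $u_1,\dots,u_{s-1}$ lying outside $S$. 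The case $r=0$ is the base case above. For the step, to saturate $u_1,\dots,u_{s-1}$ with $r+1$ entries outside $S$ I must show $a\,H^0_\fm(M'')=0$ for each generator $a\in S$; by the criterion this is the standardness of $\{u_1,\dots,u_{s-1},a\}$, which by permutation invariance (Corollary \ref{permutation standard} and Remark \ref{R5.10}(3)) I may test by placing a non-$S$ entry, say $u_{s-1}$, last: it amounts to $u_{s-1}H^0_\fm\big(M/(u_1,\dots,u_{s-2},a)M\big)=0$. But $u_1,\dots,u_{s-2},a$ is a standard sequence (by $T(s-1)$) of length $s-1$ with only $r$ entries outside $S$, hence saturated by the inner hypothesis; so $\fa$, and in particular $u_{s-1}$, kills $H^0_\fm\big(M/(u_1,\dots,u_{s-2},a)M\big)$. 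This closes both inductions.

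The main obstacle, and the reason a direct approach fails, is exactly this transfer from the distinguished subsets of $S$ to an arbitrary filter regular sequence: reducing modulo one element yields short exact sequences of (Artinian) local cohomology modules that need not split, so annihilation by $\fa$ does not propagate---naive bookkeeping only gives annihilation by $\fa^2$. The device that circumvents this is to replace ``standard'' by the \emph{sharp} length condition $\ell_R(0:_{M''}v)=B$ of Lemma \ref{length lc}, turning standardness of one further step into the single relation $vH^0_\fm(M'')=0$; its linearity in $v$ allows saturation to be checked on generators, and the inner induction bootstraps it from the generators of $S$ furnished by $(2)$. A minor technical point I would handle within the general-position choice of $S$ (again prime avoidance, after passing to an infinite residue field) is to guarantee that at each use of the criterion enough generators $a\in S$ remain filter regular over the relevant quotient, so that linearity recovers all of $\fa$.
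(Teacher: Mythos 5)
Your $(1)\Rightarrow(2)$ and your overall mechanism (the length criterion of Lemma \ref{length lc}, linearity of an $H^0$-annihilation condition tested on generators, permutation invariance) are the same as the paper's, but the engine of your $(2)\Rightarrow(1)$ contains a genuine gap. You claim that for a \emph{standard} filter regular sequence $u_1,\dots,u_{s-1}$ the equality case of Lemma \ref{length lc} gives $\ell_R(H^0_\fm(M''))=B:=\sum_{k=0}^{s-1}\binom{s-1}{k}\ell_R(H^k_\fm(M))$. This misapplies the lemma: equality in $(\star)$ for $i=0$, $j=s-1$ requires the annihilation conditions of a standard sequence of \emph{length $s$} (i.e.\ for $i+j<s$), whereas standardness of the length-$(s-1)$ prefix only gives them for $i+j<s-1$, hence only $\ell_R(H^0_\fm(M''))\le B$. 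Already for $s=2$: any filter regular $u_1$ with $u_1H^0_\fm(M)=0$ is standard, yet $\ell_R(H^0_\fm(M/u_1M))=\ell_R(H^0_\fm(M))+\ell_R\bigl(\ker(u_1 \mid H^1_\fm(M))\bigr)$ equals $B$ only when $u_1H^1_\fm(M)=0$, which fails in any generalized Cohen--Macaulay ring with $\fm H^1_\fm(R)\neq 0$ (non-Buchsbaum). Consequently your criterion ``$u_1,\dots,u_{s-1},v$ standard $\iff vH^0_\fm(M'')=0$'' is valid only in the forward direction; the converse, which is exactly what your reduction of $T(s)$ to saturation and every step of your inner induction invoke, needs the extra input $\ell_R(H^0_\fm(M''))=B$. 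This particular hole is repairable by strengthening the inner-induction statement to ``saturated \emph{and} $\ell_R(H^0_\fm(M''))=B$'' (your base case does supply both); the paper sidesteps it by never asserting the length equality for a bare prefix: it compares the colon $\bigl((x_1,\dots,x_{s-1})M:_Mz\bigr)$ with the colon by an element $x_s\in S$, showing both equal the $\fm$-saturation, so the value $B$ is always imported from a length-$s$ sequence already known to be standard via hypothesis~(2).

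A second, more structural problem is filter regularity of your mixed sequences. Your inner-induction step must treat $u_1,\dots,u_{s-2},a$ (with $a$ taken from the \emph{given} set $S$ and the $u_i$ arbitrary) as a filter regular sequence in order to apply $T(s-1)$, the criterion, and Corollary \ref{permutation standard}; nothing guarantees this, and your proposed fix --- building it into a ``general-position choice of $S$'' --- is not available, since in $(2)\Rightarrow(1)$ the set $S$ is hypothesis data and cannot be re-chosen. Replacing $a$ by filter-regular perturbations also destroys the count of non-$S$ entries on which your inner induction descends, and an auxiliary adapted generating set does not help your step because its elements are not covered by~(2). The paper's architecture avoids mixed prefixes entirely: by prime avoidance it chooses one auxiliary generating set $S'$ of $\fa$ adapted simultaneously to a fixed $(s-1)$-subset $x_1,\dots,x_{s-1}$ of $S$ and to the target sequence $y_1,\dots,y_{s-1}$; the colon-saturation comparison makes $x_1,\dots,x_{s-1},z$ standard for every $z\in S'$, one then permutes, passes to $M/zM$, applies the induction on $s$ there to conclude $y_1,\dots,y_{s-1}$ is standard on $M/zM$, lifts back via $\ell_R(H^j_\fm(M/zM))=\ell_R(H^j_\fm(M))+\ell_R(H^{j+1}_\fm(M))$ to standardness of $z,y_1,\dots,y_{s-1}$, and finally repeats the colon comparison with prefix $y_1,\dots,y_{s-1}$ and generating set $S'$. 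In short, your key lemma is the right one, but both your statement of the criterion and your inner-induction scheme break at points the paper's quotient-and-induct route is specifically designed to avoid.
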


\begin{proof}
We need only to show the direction $(2) \Rightarrow (1)$. We have to prove that every filter regular sequence $y_1, \ldots,y_s$ on $M$ is standard. If $s=1$, we have
$$
0:_M y_1 \supseteq 0:_M \fa = \bigcap_{x \in S} 0:_M x = \bigcup_{k\ge 1} 0:_M \fm^k \supseteq 0:_M y_1.
$$
Thus $y_1 \, H^0_{\fm}(M) = 0$, and so $y_1$ is a standard element. If $s>1$, we can find a set of generators $S'$ of $\fa$ such that $x_1, \ldots, x_{s-1},z$ and $y_1, \ldots, y_{s-1},z$ are filter regular sequences on $M$ for all $z \in S'$ and $\{x_1, \ldots, x_{s-1}\} \subseteq S$ by the same method of \cite[Chapter I, Proposition 1.9]{SV86}. Since every subset of $S$ which consists of $s$ elements forms a standard sequence on $M$, we have $(x_1, \ldots, x_{s-1},x) \, H^0_{\fm}(M/(x_1, \ldots, x_{s-1})M) = 0$ for all $x \in S \setminus \{x_1, \ldots, x_{s-1}\}$. Therefore,
\begin{eqnarray*}
(x_1, \ldots, x_{s-1})M :_M z & \supseteq & (x_1, \ldots, x_{s-1})M :_M \fa \\
& = & \bigcap_{x \in S} (x_1, \ldots, x_{s-1})M :_M x\\
& = & \bigcup_{k \ge 0} (x_1, \ldots, x_{s-1})M :_M \fm^k\\
& \supseteq & (x_1, \ldots, x_{s-1})M :_M z.
\end{eqnarray*}
Hence $(x_1, \ldots, x_{s-1})M :_M z = (x_1, \ldots, x_{s-1})M :_M \fm^k = (x_1, \ldots, x_{s-1})M :_M x_s$, so
$$
\ell_R\big(\dfrac{(x_1, \ldots, x_{s-1})M :_M z}{(x_1, \ldots, x_{s-1})M } \big) = \ell_R \big(\dfrac{(x_1, \ldots, x_{s-1})M :_M x_s}{(x_1, \ldots, x_{s-1})M } \big).$$
Since $x_1, \ldots, x_s$ is a standard sequence, $x_1, \ldots, x_{s-1},z$ is also a standard sequence by Lemma \ref{length lc}. By Corollary \ref{permutation standard}, $z, x_1, \ldots, x_{s-1}$ is a standard sequence on $M$. It is clear that $x_1, \ldots, x_{s-1}$ is a standard sequence on $M/zM$. Now by induction, we have $y_1, \ldots, y_{s-1}$ is a standard sequence on $M/zM$. Thus
\begin{eqnarray*}
\ell_R \big(\dfrac{(z, y_1, \ldots, y_{s-2})M :_M y_{s-1}}{(z,y_1, \ldots, y_{s-2})M } \big) &=& \sum_{j = 0}^{s-2} \binom{s-2}{j} \ell_R(H^j_{\fm}(M/zM)).\\
&=& \sum_{j= 0}^{s-1} \binom{s-1}{j} \ell_R(H^j_{\fm}(M)).
\end{eqnarray*}
Therefore, $z, y_1, \ldots, y_{s-1}$ is a standard sequence of $M$. Using Corollary \ref{permutation standard} we have $y_1, \ldots, y_{s-1}, z$ is a standard sequence on $M$. Since the elements $z \in S'$ generate $\fa$, we can show, by similar as above that $y_1, \ldots, y_s$ is a standard sequence of $M$. The proof is complete.
\end{proof}

\subsection{Frobenius closed parameter ideals II}
We prove the following lemma.

\begin{lemma}
\label{filterquotient}
Let $(R,\fm,k)$ be a reduced $F$-finite local ring of characteristic $p>0$ and let $x_1,\ldots,x_s$ be a filter regular sequence of $R^{1/p}/R$. Then it is also a filter regular sequence of $R^{1/q}/R$ for all $q=p^e$.
\end{lemma}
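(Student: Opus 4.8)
The plan is to filter $R^{1/q}/R$ by the chain $R \subseteq R^{1/p} \subseteq \cdots \subseteq R^{1/p^e}=R^{1/q}$ and run an induction on $e$, reducing the assertion to a statement about each graded piece $R^{1/p^{i+1}}/R^{1/p^i}$. First I would record the auxiliary fact that filter regular sequences pass through short exact sequences: if $0 \to M' \to M \to M'' \to 0$ is a short exact sequence of finitely generated $R$-modules and $x_1,\ldots,x_s$ is filter regular on both $M'$ and $M''$, then it is filter regular on $M$. By Lemma \ref{filter}(2) it suffices to check, for each $\fp \neq \fm$ in the relevant support, that the localized sequence is $R_\fp$-regular on $M_\fp$; localizing keeps the sequence exact, and a sequence that is $R_\fp$-regular on $M'_\fp$ and on $M''_\fp$ is $R_\fp$-regular on $M_\fp$ by the standard snake-lemma argument (the degenerate cases $M'_\fp=0$ or $M''_\fp=0$ being trivial). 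Note that each $R^{1/p^i}/R$ is a finitely generated $R$-module because $R$ is $F$-finite.

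The core of the argument is the identification of each graded piece with a Frobenius twist of $R^{1/p}/R$. The $p^i$-th root map $y \mapsto y^{1/p^i}$ is a ring isomorphism $R^{1/p} \xrightarrow{\sim} R^{1/p^{i+1}}$ carrying the subring $R$ onto $R^{1/p^i}$, so it induces an isomorphism of abelian groups $\bar\psi_i : R^{1/p}/R \xrightarrow{\sim} R^{1/p^{i+1}}/R^{1/p^i}$. A direct computation gives $\bar\psi_i(r^{p^i}\,\bar y)=\overline{(r^{p^i}y)^{1/p^i}}=\overline{r\,y^{1/p^i}}=r\,\bar\psi_i(\bar y)$ for $r \in R$; thus, under $\bar\psi_i$, multiplication by $x_j$ on $R^{1/p^{i+1}}/R^{1/p^i}$ corresponds to multiplication by $x_j^{p^i}$ on $R^{1/p}/R$. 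Because the absolute Frobenius is the identity on $\Spec R$, this twist preserves supports, so $R^{1/p^{i+1}}/R^{1/p^i}$ and $R^{1/p}/R$ have the same support and the index sets of primes in Lemma \ref{filter}(2) coincide for the two modules.

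With this identification I would then check, for each $\fp \neq \fm$ in the common support, that $x_1,\ldots,x_j$ is $R_\fp$-regular on $(R^{1/p^{i+1}}/R^{1/p^i})_\fp$ if and only if it is $R_\fp$-regular on $(R^{1/p}/R)_\fp$. Indeed, localizing $\bar\psi_i$ (and using that localization commutes with $(-)^{1/q}$ for reduced rings, so $(R^{1/p^{i+1}}/R^{1/p^i})_\fp \cong R_\fp^{1/p^{i+1}}/R_\fp^{1/p^i}$) turns the first condition into the regularity of $x_1^{p^i},\ldots,x_j^{p^i}$ on $(R^{1/p}/R)_\fp$, and a sequence is regular precisely when its $p^i$-th powers form a regular sequence. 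Since the latter regularity holds for all such $\fp$ by the hypothesis that $x_1,\ldots,x_s$ is filter regular on $R^{1/p}/R$, Lemma \ref{filter}(2) shows that $x_1,\ldots,x_s$ is filter regular on every graded piece $R^{1/p^{i+1}}/R^{1/p^i}$. Feeding this, together with the inductive hypothesis for $R^{1/p^{e-1}}/R$, into the short exact sequence
$$
0 \to R^{1/p^{e-1}}/R \to R^{1/p^e}/R \to R^{1/p^e}/R^{1/p^{e-1}} \to 0
$$
and applying the auxiliary fact from the first paragraph completes the induction.

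The step I expect to be the main obstacle is the bookkeeping around the Frobenius twist: one must keep in mind that $\bar\psi_i$ is only semilinear over $R$ and verify that filter regularity is nonetheless transported correctly. Two facts make this go through cleanly and reduce it to routine verification: the absolute Frobenius is a homeomorphism on $\Spec R$ (so twisting by $F^i$ preserves supports, and hence the prime sets appearing in Lemma \ref{filter}(2) are unchanged), and powers of a regular sequence again form a regular sequence. The only other point to flag is that localization commutes with $(-)^{1/q}$ for reduced rings, which is what legitimizes the local computation in the third paragraph.
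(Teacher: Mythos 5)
Your proof is correct and follows essentially the same route as the paper's (deliberately terse) proof: induction on $e$ via the short exact sequence relating $R^{1/p}/R$, $R^{1/q}/R$ and the quotient, with the graded piece controlled by a Frobenius-twist isomorphism and the stability of filter regularity under taking powers (Lemma \ref{filter}(3), which you re-derive locally through Lemma \ref{filter}(2)). The only cosmetic difference is that you place the inductive hypothesis on the subterm $R^{1/p^{e-1}}/R$ and twist the hypothesis module into the quotient, whereas the paper uses $0 \to R^{1/p}/R \to R^{1/q}/R \to R^{1/q}/R^{1/p} \to 0$ and applies induction to the quotient; your write-up simply supplies the details (the SES stability of filter regularity, the semilinearity bookkeeping, support preservation) that the paper leaves implicit.
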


\begin{proof}
Using Lemma \ref{filter} (3) and applying the induction for the sequence
$$
0 \to R^{1/p}/R \to R^{1/q}/R \to R^{1/q}/R^{1/p} \to 0,
$$
we get the proof of the lemma.
\end{proof}

The following is the first main result of this section.

\begin{theorem}
\label{finitedim}
Let $(R,\fm,k)$ be a reduced $F$-finite local ring of characteristic $p>0$ with $f_{\fm}(R)=t$ and let $0<s \le t$ be an integer. Then the following are equivalent.

\begin{enumerate}
\item
The Frobenius action on $H^i_{\fm}(R)$ is injective for all $i \le s$.

\item
Every filter regular sequence $x_1,\ldots,x_s$ of both $R$ and $R^{1/p}/R$ generates a Frobenius closed ideal of $R$ and it is a standard sequence on $R$.

\item
Every filter regular sequence $x_1, \ldots, x_s$ of both $R$ and $R^{1/p}/R$ generates a Frobenius closed ideal of $R$.
\end{enumerate}
\end{theorem}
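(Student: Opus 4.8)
The plan is to prove the cycle $(1)\Rightarrow(2)\Rightarrow(3)\Rightarrow(1)$. The implication $(2)\Rightarrow(3)$ is immediate, since (3) is merely the first half of (2). So the content lies in $(3)\Rightarrow(1)$ and $(1)\Rightarrow(2)$, and I expect the latter to be the heart of the matter.

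For $(3)\Rightarrow(1)$ I would argue level by level via Theorem~\ref{T1.2}(1). Fix $i\le s$ and choose a sequence $x_1,\ldots,x_s$ that is filter regular on both $R$ and $R^{1/p}/R$ (such sequences exist by prime avoidance applied to the finitely many relevant associated primes of the two modules). For every $n\ge 0$ the sequence $x_1^{p^n},\ldots,x_s^{p^n}$ is again filter regular on both $R$ and $R^{1/p}/R$, by Lemma~\ref{filter}(3) and Lemma~\ref{filterquotient}; hence (3) forces $(x_1^{p^n},\ldots,x_s^{p^n})$ to be Frobenius closed, and then \cite[Lemma 3.1]{M15} yields that the initial subsequence ideal $(x_1^{p^n},\ldots,x_i^{p^n})$ is Frobenius closed as well. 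Thus $x_1,\ldots,x_i$ is a filter regular sequence on $R$ all of whose Frobenius powers generate Frobenius closed ideals, so Theorem~\ref{T1.2}(1) shows that Frobenius acts injectively on $H^i_{\fm}(R)$. As $i\le s$ was arbitrary, this gives (1).

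For $(1)\Rightarrow(2)$ I would fix a sequence $x_1,\ldots,x_s$ filter regular on both $R$ and $C:=R^{1/p}/R$ and set $I=(x_1,\ldots,x_s)$. The organizing device is the short exact sequence $0\to R\to R^{1/p}\to C\to 0$ together with the identification of the induced map $H^i_{\fm}(R)\to H^i_{\fm}(R^{1/p})$ with the Frobenius action. Hypothesis (1) says exactly that this map is injective for $i\le s$, i.e.\ the connecting homomorphisms $H^{i-1}_{\fm}(C)\to H^i_{\fm}(R)$ vanish for $i\le s$; the long exact sequence then splits into short exact sequences
\[
0\to H^i_{\fm}(R)\to H^i_{\fm}(R^{1/p})\to H^i_{\fm}(C)\to 0\qquad(i\le s-1),
\]
all of whose terms have finite length because $i<t=f_{\fm}(R)$ (in particular these sequences force $s\le f_{\fm}(C)$, so the length machinery is available for $C$). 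I would then run the length bookkeeping of Lemma~\ref{length lc} and Proposition~\ref{L2.10} simultaneously for $R$ and for $C$, and feed the additive length relations above into the standard-sequence length characterization; matching the two sides forces the inequalities of Proposition~\ref{L2.10} to become equalities for $R$, which is precisely the assertion that $x_1,\ldots,x_s$ is a standard sequence on $R$. For the Frobenius closedness of $I$, I would take $a\in I^F$, so $a^{q}\in I^{[q]}$ for some $q=p^e$; extracting $q$-th roots inside the reduced ring $R^{1/q}$ and using additivity of Frobenius gives $a=\sum_i\beta_i x_i$ with $\beta_i\in R^{1/q}$, i.e.\ $a\in IR^{1/q}\cap R$. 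The obstruction to rewriting this with coefficients in $R$ is a relation $\sum_i\bar\beta_i x_i=0$ in $R^{1/q}/R$; since $x_1,\ldots,x_s$ is only filter regular (not regular) on this module, the obstruction is a finite length class which, through the Nagel--Schenzel identification $H^s_{\fm}(R)\cong H^0_{\fm}(H^s_I(R))$ and the compatibility of the Frobenius actions, is controlled by the injectivity supplied by (1) on $H^i_{\fm}(R)$ for $i\le s$, and is therefore annihilated. This yields $a\in I$.

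The main obstacle is the implication $(1)\Rightarrow(2)$, and within it the need to extract two rather different conclusions from a single injectivity hypothesis: standardness, which is a finite-length/length-equality statement read off from the split local cohomology sequences, and Frobenius closedness, which requires showing that the finite-length obstruction living over $R^{1/q}/R$ is exactly what the Frobenius action on the $\fm$-torsion submodule $H^s_{\fm}(R)\subseteq H^s_I(R)$ detects. Keeping the Frobenius actions compatible with both the Nagel--Schenzel isomorphism and the reduction-modulo-$x_i$ steps of the length induction is the delicate bookkeeping on which the argument turns; this is also where the hypothesis that the sequence be filter regular on $R^{1/p}/R$ (and not merely on $R$) is essential, since it is what makes the length machinery available for $C$ and keeps the relevant obstruction classes of finite length.
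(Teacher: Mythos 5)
Your cycle $(2)\Rightarrow(3)\Rightarrow(1)\Rightarrow(2)$ matches the paper's, and your $(3)\Rightarrow(1)$ leg is correct: spelling out the mixed Frobenius powers, Lemma \ref{filter}(3), Lemma \ref{filterquotient} and \cite[Lemma 3.1]{M15} before invoking Theorem \ref{T1.2}(1) is exactly what the paper's terse citation intends, and your observation that the split local cohomology sequences force $s\le f_{\fm}(R^{1/p}/R)$ is a point the paper glosses over. The gap is in $(1)\Rightarrow(2)$, in the standardness half: as you have set it up, \emph{every} relation in your bookkeeping is an inequality pointing the same way, so nothing squeezes. Proposition \ref{L2.10} gives only upper bounds $\ell_R(H^i(I;R))\le B_R$ and $\ell_R(H^i(I;R^{1/q}/R))\le B_C$, the Koszul long exact sequence gives subadditivity $\ell_R(H^i(I;R^{1/q}))\le \ell_R(H^i(I;R))+\ell_R(H^i(I;R^{1/q}/R))$, and your additive local-cohomology relations give $B_{R^{1/q}}=B_R+B_C$; combining these merely reproduces the bound $\ell_R(H^i(I;R^{1/q}))\le B_{R^{1/q}}$, which is Proposition \ref{L2.10} for $R^{1/q}$ again. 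The paper's pivot, absent from your sketch, is an unconditional \emph{equality} for the middle term: first one deduces $\fm\cdot H^i_{\fm}(R)=0$ for all $i<s$ from the injectivity hypothesis (pick $n_0$ with $\fm^{n_0}H^i_{\fm}(R)=0$, note $F^e_*(x\eta)=x^{q}F^e_*(\eta)=0$ for $q=p^e>n_0$, and cancel by injectivity of $F^e_*$); then \cite[Theorem 1.1, Corollary 4.1]{CQ11} shows $x_1^q,\ldots,x_s^q$ is a standard sequence on $R$ for all $q\ge 2$, equivalently $x_1,\ldots,x_s$ is standard on $R^{1/q}$, so Proposition \ref{L2.10} gives $\ell_R(H^i(I;R^{1/q}))=B_{R^{1/q}}$ exactly. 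Only this equality forces all your inequalities to collapse, yielding both standardness on $R$ and the additivity $\ell_R(H^i(I;R^{1/q}))=\ell_R(H^i(I;R))+\ell_R(H^i(I;R^{1/q}/R))$ for $i<s$.

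The Frobenius-closedness half of your sketch would also fail as written. Your element-chase is the argument of Proposition \ref{P1.13}, but that proof needs the sequence to be a \emph{regular} sequence so that $R/I\to H^s_I(R)$ is injective. For a filter regular sequence the kernel of this map is $I^{\lim}/I$ (Remark \ref{limitclosure}(1)), so showing the obstruction class is $\fm$-torsion and killing it via the $F$-injectivity of $H^s_{\fm}(R)\cong H^0_{\fm}(H^s_I(R))$ proves only $a\in I^{\lim}$ --- this is precisely Theorem \ref{characterization}/Corollary \ref{C7.4}, strictly weaker than $a\in I$ (Example \ref{E6.1} exhibits an $F$-injective ring with $I^F\subseteq I^{\lim}$ but $I^F\not\subseteq I$ for a parameter ideal, so no argument of this shape can close the gap). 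In the paper, closedness is not a separate mechanism: the length additivity just established forces the long exact sequence of Koszul cohomology of $0\to R\to R^{1/q}\to R^{1/q}/R\to 0$ to break into short exact sequences in degrees $<s$, whence $R/I\cong H^s(I;R)\to H^s(I;R^{1/q})\cong R^{1/q}/IR^{1/q}$ is injective for every $q=p^e$, i.e.\ $a^q\in I^{[q]}$ implies $a\in I$. Thus both conclusions of (2) hang on the single input you are missing: the $\fm$-annihilation of the low local cohomologies together with the standardness of the Frobenius powers via \cite{CQ11}.
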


\begin{proof}
$(2) \Rightarrow (3)$ is clear and $(3) \Rightarrow (1)$ follows from Theorem \ref{T1.2}.

For $(1) \Rightarrow (2)$, let $x_1,\ldots,x_s$ with $s \le t$ be a filter regular sequence of both $R$ and $R^{1/p}/R$. We prove that $x_1,\ldots,x_s$ is standard and the ideal $I = (x_1,\ldots,x_s)$ is Frobenius closed. Let $n_0>0$ be an integer such that $\fm^{n_0}H^i_{\fm}(R) = 0$ for all $i<s$. For each $x \in \frak m$, choose $e \ge 1$ such that $q = p^e > n_0$. We have $F^e_*(xH^i_{\fm}(R))=x^q F^e_*(H^i_{\fm}(R)) = 0$, where $F_*$ is the natural Frobenius action on the local cohomology. By $F$-injectivity of $R$, we have $x \cdot H^i_{\fm}(R) = 0$ and so $\fm \cdot H^i_{\frak m}(R) = 0$  for all $i<s$. By \cite[Theorem 1.1, Corollary 4.1]{CQ11}, it is easy to see that $x_1^q,\ldots,x_s^q$ is a standard sequence of $R$ for all $q = p^e \ge 2$. It is equivalent to say that $x_1,\ldots,x_s$ is a standard sequence of $R^{1/q}$. Since $R$ is reduced and $F$-finite, we get a short exact sequence of finitely generated $R$-modules:
$$
0 \to R \to R^{1/q} \to R^{1/q}/R \to 0.
$$
Because the Frobenius action on $H^i_{\fm}(R)$ is injective for all $i \le s$, each induced homomorphism $H^i_{\fm}(R) \to H^i_{\fm}(R^{1/q})$ is injective for all $i \le s$. Thus we have short exact sequences
$$
0 \to H^i_{\fm}(R) \to H^i_{\fm}(R^{1/q}) \to H^i_{\fm}(R^{1/q}/R) \to 0
$$
for all $i < s$. Therefore,
$$
\ell_R(H^i_{\fm}(R^{1/q}))=\ell_R(H^i_{\fm}(R))+\ell_R(H^i_{\fm}(R^{1/q}/R))
$$
for all $i<s$. By Lemma \ref{filterquotient}, $x_1, \ldots, x_s$ is a filter regular sequence of $R^{1/q}/R$. Applying Proposition \ref{L2.10}, for all $i<s$ we have
\begin{eqnarray*}
\ell_R(H^i(I; R)) &\le &\sum_{j=0}^{i} \binom{s}{i-j} \ell_R(H^j_{\fm}(R)) \\
\ell_R(H^i(I; R^{1/q})) &= &\sum_{j=0}^{i} \binom{s}{i-j} \ell_R(H^j_{\fm}(R^{1/q})) \\
\ell_R(H^i(I; R^{1/q}/R)) &\le &\sum_{j=0}^{i} \binom{s}{i-j} \ell_R(H^j_{\fm}(R^{1/q}/R)),
\end{eqnarray*}
where the middle equation follows from the fact that $x_1,\ldots,x_s$ is a standard sequence of $R^{1/q}$. On the other hand, by applying Koszul cohomology to the sequence
$$
0 \to R \to R^{1/q} \to R^{1/q}/R \to 0,
$$
we have $\ell_R(H^i(I; R^{1/q})) \le \ell_R(H^i(I; R)) + \ell_R(H^i(I; R^{1/q}/R))$ for all $i<s$. Therefore,
\begin{eqnarray*}
\ell_R(H^i(I; R^{1/q})) &=& \ell_R(H^i(I; R)) + \ell_R(H^i(I; R^{1/q}/R)) \quad (\star) \\
\ell_R(H^i(I; R)) &= &\sum_{j=0}^{i} \binom{s}{i-j} \ell_R(H^j_{\fm}(R)) \quad (\star \star)
\end{eqnarray*}
for all $i<s$. By $(\star \star)$ and Proposition \ref{L2.10}, it follows that $x_1, \ldots, x_s$ is a standard sequence of $R$. Applying $(\star)$ to the exact sequence of Koszul cohomology:
$$
0 \to H^0(I; R) \to H^0(I; R^{1/q}) \to H^0(I; R^{1/q}/R) \to \cdots
$$
$$
\to H^{s-1}(I; R^{1/q}/R) \to H^{s}(I; R) \to H^{s}(I; R^{1/q}) \to \cdots$$
we have short exact sequences
$$
0 \to H^i(I; R) \to H^i(I; R^{1/q}) \to H^i(I; R^{1/q}/R) \to 0
$$
for all $i<s$ and the injection
$$
0 \to H^s(I; R) \cong R/I \to H^s(I; R^{1/q}) \cong R^{1/q}/IR^{1/q}
$$
for $q=p^e$ with $e>0$. Thus, $I$ is Frobenius closed.
\end{proof}

\begin{remark}
The condition that $x_1,\ldots,x_s$ is a filter regular sequence of both $R$ and $R^{1/p}/R$ is necessary. To see this, take $A$ to be a reduced and $F$-finite Cohen-Macaulay local ring which is not $F$-injective. So there exists a parameter ideal $(x_1,\ldots,x_s)$ of $A$ that is not Frobenius closed with $s=\dim A$. Let $R:=A[[x]]$. Then $R$ is Cohen-Macaulay and so $f_{\fm}(R)=\dim R=\dim A+1$. The assumption on the injectivity of the Frobenius is clear, since $H^i_{\fm}(R) = 0$ for all $i \le s$. However, $x_1,\ldots,x_s$ is a regular sequence of $R$ of length $s$, which generates a non-Frobenius closed ideal of $R$.
\end{remark}

The following theorem is the second main result of this section, which can be seen as a generalization of Proposition \ref{P1.13}.

\begin{theorem}
\label{maintheoremA}
Let $(R,\fm,k)$ be an $F$-injective local ring with $f_{\fm}(R)=t$. Then every filter regular sequence of length at most $t$ is a standard sequence and the ideal generated by it is Frobenius closed.
\end{theorem}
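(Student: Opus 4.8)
The plan is to reduce to the case where $R$ is $F$-finite and then apply Theorem \ref{finitedim}. Since $R$ is $F$-injective it is reduced by Lemma \ref{L1.11}, but it need not be $F$-finite, whereas Theorem \ref{finitedim} requires both hypotheses. So first I would pass to the completion $\widehat R$ and then to a $\Gamma$-construction: for a sufficiently small cofinite $\Gamma$, the ring $S:=\widehat R^{\Gamma}$ is $F$-finite and $F$-injective (hence reduced), as recorded in the proof of Lemma \ref{L1.11} via \cite[Lemma 2.9]{EH08}, and the composite $R\to S$ is faithfully flat with $\fm S$ equal to the maximal ideal of $S$. Because the closed fibre of $R\to S$ is a field, one has $H^i_{\fm_S}(S/(x_1,\dots,x_j)S)\cong H^i_{\fm}(R/(x_1,\dots,x_j)R)\otimes_R S$ for every $j$, lengths of finite-length modules are unchanged, and in particular $f_{\fm_S}(S)=t$.

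Next I would check that the three relevant properties descend along $R\to S$, so that it suffices to prove the theorem over $S$. Filter regularity is immediate, since the colon modules commute with the flat base change and finite length is preserved. Standardness transfers because the defining vanishing $(x_1,\dots,x_s)\,H^i_{\fm}(R/(x_1,\dots,x_j)R)=0$ holds if and only if the corresponding vanishing holds over $S$, by faithful flatness applied to the isomorphism above. For Frobenius closure I would use $I^{[q]}S=(IS)^{[q]}$ together with $IS\cap R=I$: if $u\in I^{F}$ in $R$ then $u^{q}\in(IS)^{[q]}$, so $u\in(IS)^{F}=IS$ once $IS$ is known to be Frobenius closed, whence $u\in IS\cap R=I$. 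Thus I may assume from now on that $R$ is $F$-finite.

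In the $F$-finite case the argument is short. Let $x_1,\dots,x_s$ with $0<s\le t$ be a filter regular sequence of $R$. By Corollary \ref{C1.16} it is automatically a filter regular sequence of $R^{1/p}/R$, hence a filter regular sequence of both $R$ and $R^{1/p}/R$. Since $R$ is $F$-injective, the Frobenius action on $H^i_{\fm}(R)$ is injective for every $i$, so statement (1) of Theorem \ref{finitedim} holds for this $s$; the implication $(1)\Rightarrow(2)$ of that theorem then shows that $x_1,\dots,x_s$ is standard and that $(x_1,\dots,x_s)$ is Frobenius closed. The remaining case $s=0$ is trivial, since $(0)^{F}=0$ in the reduced ring $R$ and the empty sequence is vacuously standard.

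The hard part will be the descent of the first two steps rather than the final application: one must be sure that replacing $R$ by $\widehat R^{\Gamma}$ really preserves each of the conclusions, and the delicate ingredients are the clean base-change behaviour of local cohomology and of length (which uses that the closed fibre is a field) and the nontrivial preservation of $F$-injectivity under the $\Gamma$-construction. Once these are in place, Corollary \ref{C1.16} is exactly what removes the auxiliary hypothesis ``filter regular on $R^{1/p}/R$'' present in Theorem \ref{finitedim}, so that a filter regular sequence of $R$ alone suffices.
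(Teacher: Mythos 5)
Your proposal is correct and follows essentially the same route as the paper's own proof: reduce via the $\Gamma$-construction (using \cite[Lemma 2.9]{EH08} as in Lemma \ref{L1.11}) to the $F$-finite $F$-injective case, check that filter regularity, standardness, and Frobenius closedness descend along the faithfully flat extension, then invoke Corollary \ref{C1.16} and Theorem \ref{finitedim}. The only cosmetic differences are that the paper passes further to $S:=\widehat{\widehat{R}^{\Gamma}}$ while you stop at $\widehat{R}^{\Gamma}$ (harmless, since Theorem \ref{finitedim} needs no completeness), and that you spell out the descent steps --- in particular the Frobenius-closure descent via $I^{[q]}S=(IS)^{[q]}$ and $IS\cap R=I$ --- which the paper merely asserts.
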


\begin{proof}
If $\dim R=0$, then $R$ is a field by assumption and there is nothing to prove. Therefore, we may assume that $\dim R>0$ and hence $t<\infty$. It is clear that $H^0_{\fm}(R) = 0$, so $\depth R>0$. Let $x_1,\ldots,x_s$ with $s \le t$ be a filter regular sequence of $R$. Using $\Gamma$-construction and taking a sufficiently small $\Gamma$, there is a chain of faithfully flat extensions of local rings of the same Krull dimension:
$$
R \to \widehat{R}^{\Gamma} \to S:=\widehat{\widehat{R}^{\Gamma}},
$$
where $\widehat{R}^{\Gamma}$ is $F$-finite and $F$-injective. Then we have
$$
\big((x_1,\ldots,x_{i-1}):_R x_i)/(x_1,\ldots,x_{i-1})\big) \otimes_R S \cong ((x_1,\ldots,x_{i-1}):_Sx_i)/(x_1,\ldots,x_{i-1})
$$
and therefore, $x_1,\ldots,x_s$ is a filter regular sequence on $R$ if and only if so is on $S$. Likewise, since local cohomology commutes with flat base change, $x_1,\ldots,x_s$ is a standard sequence on $R$ if and only if so is on $S$. Finally, the ideal $(x_1,\ldots,x_s)$ is Frobenius closed if and only if so is $(x_1,\ldots,x_s)S$. Hence, we may assume that $R$ is an $F$-finite $F$-injective complete local ring (the fact that $R$ is reduced follows from Lemma \ref{L1.11}). Moreover by Corollary \ref{C1.16}, $x_1, \ldots, x_s$ is a filter regular sequence of $R^{1/q}/R$ for all $q = p^e$. The theorem now follows from Theorem \ref{finitedim}.
\end{proof}

We recover the following corollary which is the main result of Ma \cite{M15}, giving an answer to Takagi's question  (cf. \cite[Open problem A.3]{KS11}). It is noted that the proof of \cite{M15} relies on \cite{GO83}.

\begin{corollary}
\label{Buchsbaum}
Let $(R,\fm,k)$ be a generalized Cohen-Macaulay $F$-injective local ring. Then $R$ is Buchsbaum.
\end{corollary}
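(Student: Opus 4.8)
The plan is to combine Theorem \ref{maintheoremA} with the cohomological characterizations of generalized Cohen-Macaulay rings (Theorem \ref{charac gCM}, Remark \ref{R5.10}(1)) and of standard parameter ideals (Theorem \ref{charac standard}). First I would dispose of a boundary case: since the notions of generalized Cohen-Macaulay module and of Buchsbaum module are both defined only when $d=\dim R>0$, I may assume $d>0$ throughout. Then, because $R$ is generalized Cohen-Macaulay, Remark \ref{R5.10}(1) gives $f_{\fm}(R)=d$, so the finiteness dimension $t$ appearing in Theorem \ref{maintheoremA} equals the dimension $d$.

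Next I would invoke the defining property of generalized Cohen-Macaulay rings recorded in the remark after Theorem \ref{charac gCM}: every system of parameters of $R$ forms a filter regular sequence. Consequently, an \emph{arbitrary} parameter ideal $\fq=(x_1,\ldots,x_d)$ is generated by a filter regular sequence whose length is exactly $d=t$. Theorem \ref{maintheoremA} then applies verbatim and shows that $x_1,\ldots,x_d$ is a standard sequence of $R$.

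The key step is to identify a standard sequence of the full length $d$ with a standard system of parameters. By Definition \ref{nCM4}, the sequence $x_1,\ldots,x_d$ being standard means $(x_1,\ldots,x_d)\,H^i_{\fm}\big(R/(x_1,\ldots,x_j)R\big)=0$ for all $i+j<d$, which is precisely the criterion of Theorem \ref{charac standard} for the parameter ideal $\fq=(x_1,\ldots,x_d)$ to be standard. Hence $\fq$ is a standard parameter ideal. Since $\fq$ was an arbitrary parameter ideal, every parameter ideal of $R$ is standard, which is exactly the definition of $R$ being Buchsbaum (Definition \ref{nCM2}).

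I do not expect a serious obstacle here, since the entire analytic force of the argument is already packaged inside Theorem \ref{maintheoremA}; what remains is the bookkeeping identification of ``standard sequence of length $t=d$'' with ``standard parameter ideal.'' The only point demanding a little care is to confirm that \emph{every} parameter ideal—and not merely some—is obtained in this way, which is guaranteed precisely because in a generalized Cohen-Macaulay ring every system of parameters is automatically a filter regular sequence of length $d=f_{\fm}(R)$.
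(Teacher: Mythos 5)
Your proposal is correct and follows essentially the same route as the paper: the paper's proof likewise observes that in a generalized Cohen--Macaulay ring every system of parameters is a filter regular sequence (of length $d=f_{\fm}(R)$), applies Theorem \ref{maintheoremA} to conclude every such sequence is standard, and concludes Buchsbaumness from Definition \ref{nCM2}. Your additional bookkeeping identifying a standard sequence of length $d$ (Definition \ref{nCM4}) with a standard parameter ideal via Theorem \ref{charac standard} is exactly the implicit content of the paper's two-line argument.
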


\begin{proof}
Under the stated assumption, every system of parameters of $R$ is a filter regular sequence by Lemma \ref{filter}. The corollary follows from Definition \ref{nCM2} and Theorem \ref{maintheoremA}.
\end{proof}

\begin{remark}
Ma proved the following result in \cite{M15}. Let $(R,\fm,k)$ be a generalized Cohen-Macaulay ring of characteristic $p>0$. Then $R$ is $F$-injective if and only if every parameter ideal is Frobenius closed.
\end{remark}

\subsection{Frobenius closed parameter ideals III}
Let $(R,\fm,k)$ be a local ring of characteristic $p>0$. Set $d=\dim R$. In order to show that $R$ is $F$-injective, it is necessary to consider all local cohomology modules, while it suffices to consider local cohomology modules except the one of top degree to check the Buchsbaumness on $R$. It thus seems natural to pose the following question.

\begin{question}\label{gtakagi}
Let $(R, \fm,k)$ be a reduced $F$-finite generalized Cohen-Macaulay local ring. Suppose that the Frobenius acts on $H^i_{\fm}(R)$ injectively for all $i<d$. Then is $R$ Buchsbaum?
\end{question}

In the same spirits of the method in the proof of \cite[Chapter I, Proposition 1.9]{SV86}, we have the following lemma.

\begin{lemma}
\label{basisfilter}
Let $(R,\fm,k)$ be a local ring and with $\fa$ an $\fm$-primary ideal. Let $M_1, \ldots, M_k$ be finitely generated $R$-modules. Then we can find a set of generators $S$ of $\fa$ such that for any subset of $S$ forms a filter regular sequence on all of $M_1,\ldots,M_k$ in any order.
\end{lemma}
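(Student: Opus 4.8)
The plan is to build the generating set one element at a time by prime avoidance, after two preliminary reductions that make the inductive bookkeeping finite. First I would reduce to a single module: setting $M:=\bigoplus_{j=1}^k M_j$, one has $\Ass_R(M/(x_1,\ldots,x_i)M)=\bigcup_{j=1}^k \Ass_R(M_j/(x_1,\ldots,x_i)M_j)$, so a sequence is a filter regular sequence on $M$ if and only if it is one on every $M_j$; hence it suffices to produce $S$ that works for $M$. Next I would reformulate the target property. Using Lemma \ref{filter}(2), together with the standard fact that any subsequence (in any order) of a regular sequence on a finitely generated module over a Noetherian local ring is again regular, I claim that a finite subset $S\subseteq\fa$ has the property that every subset of it, in every order, is a filter regular sequence on $M$ \emph{if and only if} for every $\fp\in\Supp_R M$ with $\fp\neq\fm$ the elements of $S$ lying in $\fp$ form an $M_{\fp}$-regular sequence. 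This reformulation is the crucial device, since it removes all apparent dependence on the order and on the particular subset chosen.

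With this in hand I would construct $S=\{x_1,\ldots,x_m\}$ inductively, in the spirit of \cite[Chapter I, Proposition 1.9]{SV86}. Suppose $x_1,\ldots,x_{l-1}$ have been chosen. Let
$$
\Lambda_{l-1}:=\bigcup_{T\subseteq\{x_1,\ldots,x_{l-1}\}}\big(\Ass_R(M/(T)M)\setminus\{\fm\}\big),
$$
which is a \emph{finite} set of non-maximal primes, because there are only finitely many subsets $T$ and each associated-prime set is finite; since $\fa$ is $\fm$-primary we have $\fa\not\subseteq\fp$ for every $\fp\in\Lambda_{l-1}$, so prime avoidance permits a choice of $x_l\in\fa$ avoiding all of $\Lambda_{l-1}$. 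The verification that $\{x_1,\ldots,x_l\}$ still satisfies the reformulated property is the heart of the matter: given $\fp\neq\fm$ with $x_l\in\fp$, let $T$ be the set of previously chosen elements lying in $\fp$; by the inductive hypothesis $T$ is an $M_{\fp}$-regular sequence, and $x_l$ is a nonzerodivisor on $(M/(T)M)_{\fp}$ precisely because it avoids every $\fq\in\Ass_R(M/(T)M)$ with $\fq\subseteq\fp$, all of which lie in $\Lambda_{l-1}$.

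Finally I would arrange that the $x_l$ actually generate $\fa$. Starting from a minimal generating set $a_1,\ldots,a_m$ of $\fa$, I would apply prime avoidance in its coset form to the ideal $\fm\fa$: since $\fm\fa\supseteq\fm^{N+1}$ for $N\gg 0$ (as $\fa$ is $\fm$-primary), we have $\fm\fa\not\subseteq\fp$ for every non-maximal $\fp$, so one can choose $b_l\in\fm\fa$ with $x_l:=a_l+b_l\notin\fp$ for all $\fp\in\Lambda_{l-1}$. Then $x_l\equiv a_l\pmod{\fm\fa}$, so the images of $x_1,\ldots,x_m$ still span $\fa/\fm\fa$ and hence generate $\fa$ by Nakayama, while each $x_l$ avoids $\Lambda_{l-1}$. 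I expect the main obstacle to be exactly this tension between avoidance and generation, compounded by the fact that adjoining $x_l$ creates new quotients $M/(T\cup\{x_l\})M$ whose associated primes the \emph{earlier} elements must also avoid; the localization reformulation of the first paragraph is what dissolves this apparent circularity, collapsing all constraints at stage $l$ into the single finite avoidance set $\Lambda_{l-1}$.
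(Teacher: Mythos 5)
Your proof is correct and takes essentially the approach the paper intends: the paper gives no written proof, deferring entirely to the method of \cite[Chapter I, Proposition 1.9]{SV86}, which is precisely your inductive prime-avoidance construction, with coset avoidance modulo $\fm\fa$ (the Kaplansky--Davis trick the paper itself uses elsewhere) plus Nakayama to retain generation. Your localization reformulation via Lemma \ref{filter}(2), combined with permutability of regular sequences on finitely generated modules over a local ring, correctly dispatches the ``any subset, in any order'' requirement and supplies exactly the details the paper leaves to the citation.
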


As observed from the remark after Theorem \ref{finitedim}, it is not true that any filter regular sequence of $R$ of length $s$ generates a Frobenius closed ideal under the assumption that the Frobenius action on $H^i_{\fm}(R)$ is injective for all $i \le s$. However, we have the following theorem.

\begin{theorem}
\label{injec imply stand}
Let $(R,\fm,k)$ be a reduced $F$-finite local ring with $f_{\fm}(R)=t \ge 1$ and let $s \le t$ be a positive integer. Suppose that the Frobenius action is injective on $H^i_{\fm}(R)$ for all $i<s$. Then every filter regular sequence of length $s$ of $R$ is standard.
\end{theorem}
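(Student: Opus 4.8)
The plan is to use the two combinatorial lemmas on filter regular sequences to reduce to one well-chosen family of sequences, and then to transport standardness from the rings $R^{1/q}$ back to $R$ by a length computation in the spirit of Theorem \ref{finitedim}. First I would apply Lemma \ref{standardcri} with $M=R$, the given integer $s$, and the $\fm$-primary ideal $\fa=\fm$: since every filter regular sequence of $R$ lies in $\fm$, it suffices to exhibit a single system of generators $S$ of $\fm$ such that every $s$-element subset of $S$ is a filter regular sequence on $R$ which is standard. To produce $S$ I would invoke Lemma \ref{basisfilter} for the two modules $M_1=R$ and $M_2=R^{1/p}/R$ and the ideal $\fm$, obtaining generators $S$ such that every subset forms a filter regular sequence, in any order, on both $R$ and $R^{1/p}/R$; by Lemma \ref{filterquotient} any such subset is then filter regular on $R^{1/q}/R$ for all $q=p^e$, and consequently (via the exact sequence $0\to R\to R^{1/q}\to R^{1/q}/R\to 0$) on $R^{1/q}$ as well. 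The problem thus reduces to a fixed subset $x_1,\ldots,x_s$ that is filter regular on $R$, on all $R^{1/q}$, and on all $R^{1/q}/R$, and we must prove it is standard on $R$.

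For this fixed sequence I would first harvest the injectivity hypothesis as in the proof of Theorem \ref{finitedim}: injectivity of Frobenius on $H^i_{\fm}(R)$ for $i<s$ forces $\fm\cdot H^i_{\fm}(R)=0$ for all $i<s$, whence by \cite[Theorem 1.1, Corollary 4.1]{CQ11} the sequence $x_1,\ldots,x_s$ is standard on each $R^{1/q}$. Writing $I=(x_1,\ldots,x_s)$ and $J=(x_1,\ldots,x_{s-1})$, and noting that filter regularity makes $(JR:_R x_s)/JR$ equal to the finite-length module $H^0_{\fm}(R/JR)$, Lemma \ref{length lc} identifies standardness of $x_1,\ldots,x_s$ on $R$ with the single equality
\[
\ell_R\big(H^0_{\fm}(R/JR)\big)=\sum_{k=0}^{s-1}\binom{s-1}{k}\ell_R\big(H^k_{\fm}(R)\big),
\]
whose inequality $\le$ is automatic; thus only the reverse inequality remains. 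The same lemma applied to $R^{1/q}$ turns the standardness just established there into the analogous equality over $R^{1/q}$, all terms being of finite length since $k\le s-1<s\le t=f_{\fm}(R)=f_{\fm}(R^{1/q})$.

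To pass from $R^{1/q}$ back to $R$ I would compare the three finite-length $\fm$-torsion modules $H^0_{\fm}(R/JR)$, $H^0_{\fm}(R^{1/q}/JR^{1/q})$ and $H^0_{\fm}((R^{1/q}/R)/J(R^{1/q}/R))$. Tensoring $0\to R\to R^{1/q}\to R^{1/q}/R\to 0$ with $R/J$ yields an exact sequence whose left connecting term is $\Tor_1^R(R^{1/q}/R,\,R/J)$; because $x_1,\ldots,x_{s-1}$ is filter regular on both $R$ and $R^{1/q}/R$, the Koszul complex of the $x_i$ resolves $R/J$ and kills the Koszul homology of $R^{1/q}/R$ after localizing at any $\fp\neq\fm$, so this $\Tor$ module is supported only at $\fm$ and has finite length. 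This controls the kernel of $R/JR\to R^{1/q}/JR^{1/q}$ and converts the known equality on $R^{1/q}$ into the sought lower bound for $\ell_R\big(H^0_{\fm}(R/JR)\big)$.

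The genuine difficulty, which is why the statement does not follow verbatim from Theorem \ref{finitedim}, is the top cohomological degree: we are not assuming that Frobenius acts injectively on $H^s_{\fm}(R)$, so the long exact sequence of $0\to R\to R^{1/q}\to R^{1/q}/R\to 0$ need not be right exact at $H^{s-1}_{\fm}(R^{1/q}/R)$, and when $s=t$—the case needed for the generalized Cohen-Macaulay application in Corollary \ref{takagi}—the module $H^{s-1}_{\fm}(R^{1/q}/R)$ can fail to be finitely generated, so Proposition \ref{L2.10} is not directly available for $R^{1/q}/R$ in that degree. The main work is therefore a refined length count that replaces the possibly infinite $H^{s-1}_{\fm}(R^{1/q}/R)$ by its finite-length submodule $\operatorname{im}\big(H^{s-1}_{\fm}(R^{1/q})\to H^{s-1}_{\fm}(R^{1/q}/R)\big)$, which is the only part that enters through the finite-length $\Tor$ term; combining this bound with the equalities already forced in the lower degrees $i<s-1$ (where the local cohomology sequence does split) closes the gap and yields the reverse inequality, hence standardness of $x_1,\ldots,x_s$ on $R$. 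By Lemma \ref{standardcri} this completes the proof.
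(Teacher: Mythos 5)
Your reduction coincides with the paper's: Lemma \ref{basisfilter} applied to $R$ and $R^{1/p}/R$ plus Lemma \ref{standardcri} with $\fa=\fm$, and the harvesting of the hypothesis ($\fm\cdot H^i_{\fm}(R)=0$ for $i<s$, hence $x_1^q,\ldots,x_s^q$ standard on $R$ by \cite[Theorem 1.1, Corollary 4.1]{CQ11}, i.e.\ $x_1,\ldots,x_s$ standard on $R^{1/q}$) is also the machinery of Theorem \ref{finitedim}. But your argument then has a genuine gap: the entire weight of the proof rests on the final ``refined length count'' in the top degree, and you only assert it. Concretely, with $J=(x_1,\ldots,x_{s-1})$ and $W=R^{1/q}/R$, your snake-lemma count requires the upper bound
$\ell\bigl(0:_{W/JW}x_s\bigr)\le \sum_{k=0}^{s-2}\binom{s-1}{k}\ell\bigl(H^k_{\fm}(W)\bigr)+\ell\bigl(\operatorname{im}(H^{s-1}_{\fm}(R^{1/q})\to H^{s-1}_{\fm}(W))\bigr)$,
but the induction behind Lemma \ref{length lc} breaks exactly there: when $s=t$ one can have $f_{\fm}(W)=s-1$, so the last inductive step needs control of kernels of multiplication maps on the possibly non-finitely-generated module $H^{s-1}_{\fm}(W)$, and nothing in your sketch shows that the connecting maps feeding into $H^0_{\fm}$ of the successive quotients factor through the finite-length image submodule you name. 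You correctly diagnose this as the obstruction that blocks a verbatim application of Theorem \ref{finitedim}, but diagnosing it is not the same as overcoming it. Two further, smaller errors: $(JR:_Rx_s)/JR$ is contained in, not equal to, $H^0_{\fm}(R/JR)$ --- equality is the assertion $x_s\,H^0_{\fm}(R/JR)=0$, which is part of standardness, and Lemma \ref{length lc} characterizes standardness via the colon quotient, so a lower bound for $\ell(H^0_{\fm}(R/JR))$ is the wrong target (your left-exact torsion sequence does land on the colon module, so this is repairable, but as written the count aims at the wrong quantity); and for the count you need the kernel of $R/JR\to R^{1/q}/JR^{1/q}$ to \emph{vanish}, not merely to have finite length --- vanishing is precisely Frobenius closedness of $J$, which you never invoke even though Theorem \ref{finitedim}, applied with $s-1$ in place of $s$ (legitimate, since your hypothesis gives injectivity for $i\le s-1$), supplies it.

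The paper's proof avoids the top degree entirely, and you should compare your plan with it. After the identical reduction, it extracts from Theorem \ref{finitedim} only that $(x_1,\ldots,x_i)$ is Frobenius closed for all $i<s$, and then proves directly that $x_1,\ldots,x_s$ is a $d$-sequence, which suffices for standardness: from $yx_ix_j\in(x_1,\ldots,x_{i-1})$ one raises to $q$-th powers to get $y^qx_i^qx_j^q\in(x_1,\ldots,x_{i-1})^{[q]}$, deletes $x_i^q$ because $x_1^q,\ldots,x_s^q$ is a $d$-sequence for $q\gg 0$ (filter regular sequences of length at most $f_{\fm}(R)$ lying deep inside a power of $\fm$ are $d$-sequences), and then descends to $yx_j\in(x_1,\ldots,x_{i-1})$ by Frobenius closedness, following \cite[Proposition 3.3]{M15}. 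No length computation and no local cohomology of $R^{1/q}/R$ in degree $s-1$ ever appears. Either adopt that route, or actually carry out the refined inequality above; as it stands, your proposal leaves the one genuinely hard step as an unproven claim.
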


\begin{proof}
Put $n=\dim_k \fm/\fm^2$. By Lemma \ref{basisfilter}, we can find a set of generators $S$ of $\fm$ such that every subset of $S$ which consists of $s$ elements, forms a filter regular sequence of both $R$ and $R^{1/p}/R$. Take any subset $\{x_1, \ldots, x_s\}$ of $S$. By Lemma \ref{standardcri}, we need only to show that the sequence $x_1, \ldots, x_s$ is standard. By Theorem \ref{finitedim}, we have that $(x_1,\ldots,x_i)$ is Frobenius closed for all $i<s$. It is enough to prove that $x_1,\ldots,x_s$ is a $d$-sequence. The following argument is inspired by \cite[Proposition 3.3]{M15}. We want to prove the equality
$$
(x_1, \ldots, x_{i-1}):_R x_ix_j = (x_1, \ldots, x_{i-1}):_R x_j
$$
for all $i \le j \le s$. One inclusion is obvious. For the other inclusion, let $y \in (x_1, \ldots, x_{i-1}):_R x_ix_j$. Notice that there is a positive integer $N$ such that every filter regular sequence of length at most $f_{\fm}(R)$ contained in $\fm^N$ is a $d$-sequence. Now for $q> N$, we have
\begin{eqnarray*}
&\,& yx_ix_j \in (x_1, \ldots, x_{i-1}) \\
&\Rightarrow & y^qx_i^q x_j^q \in (x_1, \ldots, x_{i-1})^{[q]}\\
&\Rightarrow & y^q x_j^q \in (x_1, \ldots, x_{i-1})^{[q]}\\
&\Rightarrow & y x_j \in (x_1, \ldots, x_{i-1})\\
&\Rightarrow & y \in (x_1, \ldots, x_{i-1}):_Rx_j,
\end{eqnarray*}
where the third line follows from the fact that $x_1^q, \ldots, x_s^q$ is a $d$-sequence (because $q > N $), and the fourth line from the fact that $(x_1, \ldots, x_{i-1})$ is Frobenius closed. The proof is complete.
\end{proof}

We obtain the following corollary as an affirmative answer to Question \ref{gtakagi}. It should be noted that Bhatt, Ma and Schwede \cite{BMS16} prove the same result by a different method.

\begin{corollary}
\label{takagi}
Let $(R,\fm,k)$ be a reduced $F$-finite generalized Cohen-Macaulay local ring with $d=\dim R$. Suppose the Frobenius action on $H^i_{\fm}(R)$ is injective for all $i<d$. Then $R$ is Buchsbaum.
\end{corollary}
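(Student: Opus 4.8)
The plan is to reduce the statement directly to Theorem \ref{injec imply stand}, which already carries out the substantive work, and then translate its conclusion from the language of standard sequences into that of standard parameter ideals. First I would record the basic numerical input: since $R$ is generalized Cohen-Macaulay with $d=\dim R>0$, the local cohomology modules $H^i_{\fm}(R)$ are finitely generated for all $i<d$, so by Remark \ref{R5.10}(1) the finiteness dimension satisfies $f_{\fm}(R)=d$. In particular $t:=f_{\fm}(R)=d\ge 1$, so the running hypotheses of Theorem \ref{injec imply stand} are met once we choose $s=d$.

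Next I would apply Theorem \ref{injec imply stand} with $s=t=d$. Its hypothesis is exactly that the Frobenius action be injective on $H^i_{\fm}(R)$ for all $i<s=d$, which is precisely what we are given, and $R$ is reduced and $F$-finite by assumption. The conclusion is that every filter regular sequence of $R$ of length $d$ is a standard sequence in the sense of Definition \ref{nCM4}. To finish, I would upgrade this from filter regular sequences to arbitrary systems of parameters: because $R$ is generalized Cohen-Macaulay, every system of parameters of $R$ is a filter regular sequence by Lemma \ref{filter} (equivalently, by the remark following Theorem \ref{charac gCM}). Hence for an arbitrary parameter ideal $\fq=(x_1,\ldots,x_d)$, the generating sequence $x_1,\ldots,x_d$ is filter regular of length $d$, so it is standard, meaning $\fq\,H^i_{\fm}(R/(x_1,\ldots,x_j)R)=0$ for all $i+j<d$. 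By Theorem \ref{charac standard} this is exactly the defining condition for $\fq$ to be a standard parameter ideal, and since $\fq$ was arbitrary, $R$ is Buchsbaum by Definition \ref{nCM2}.

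The argument is essentially formal once Theorem \ref{injec imply stand} is granted, so the genuine difficulty lives entirely in that theorem (and ultimately in the $d$-sequence and Frobenius-closure bookkeeping of Theorem \ref{finitedim}); here we only invoke the boundary case $s=d$. The single point that requires care is the identification, under $s=d=f_{\fm}(R)$, of the length-$d$ standard sequence condition of Definition \ref{nCM4} with the standard parameter ideal condition of Theorem \ref{charac standard}, together with confirming that the admissible range $s\le t$ is attained with equality precisely because generalized Cohen-Macaulayness forces $f_{\fm}(R)=d$. This is exactly the same reduction as in the proof of Corollary \ref{Buchsbaum}, with the weaker hypothesis (injectivity only for $i<d$) handled by replacing Theorem \ref{maintheoremA} with Theorem \ref{injec imply stand}.
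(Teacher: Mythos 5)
Your proposal is correct and is essentially the paper's own argument: the paper states Corollary \ref{takagi} as an immediate consequence of Theorem \ref{injec imply stand}, relying on exactly the reductions you spell out — $f_{\fm}(R)=d$ for a generalized Cohen--Macaulay ring (Remark \ref{R5.10}(1)), every system of parameters being a filter regular sequence, and the identification of length-$d$ standard sequences with standard parameter ideals via Theorem \ref{charac standard}. Your explicit verification of these translation steps, mirroring the proof of Corollary \ref{Buchsbaum} with Theorem \ref{maintheoremA} replaced by Theorem \ref{injec imply stand}, fills in precisely what the paper leaves implicit.
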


\section{$F$-injective, $F$-pure and stably $FH$-finite rings}
\label{sec4}

\begin{definition}
Let $R$ be a Noetherian ring of characteristic $p>0$. Then $R$ is said to be \textit{$F$-pure} (resp. \textit{$F$-split}), if the Frobenius endomorphism $R \to R$ is pure (resp. split).
\end{definition}

It is easy to see that $F$-split rings are $F$-pure. If $R$ is an $F$-finite ring, then $R$ is $F$-pure if and only if $R$ is $F$-split. We will consider $F$-pure rings, because they behave better than $F$-split rings in the non $F$-finite case. If $R$ is $F$-pure, then every ideal is Frobenius closed, and the converse holds true under a mild condition (cf. \cite{Hoc77}).

\begin{definition}
Let $M$ be an $R$-module with a Frobenius action $F$. A submodule $N$ of $M$ is called \textit{F-compatible} if $F(N) \subseteq N$.
\end{definition}

In \cite{M14}, Ma showed that the local cohomology modules of $F$-pure local rings satisfy certain interesting conditions originally studied in \cite{EH08}.

\begin{definition}
We say that an $R$-module $M$ with a Frobenius action $F$ is \textit{anti-nilpotent}, if for any $F$-compatible submodule $N$, the induced Frobenius action of $F$ on $M/N$ is injective. We say that $(R,\fm,k)$ is \textit{stably $FH$-finite}, if the local cohomology $H^i_{\fm}(R)$ are anti-nilpotent for all $i \ge 0$.
\end{definition}

For later use, we prove the following lemma. We consider $M$ as an $R\{F\}$-module via the Frobenius action $F$.

\begin{lemma}
\label{antilemma}
Let $R$ be a Noetherian ring of characteristic $p>0$. Then
\begin{enumerate}
\item
Let $0 \to L\to M \to N \to 0$ be a short exact sequence of $R\{F\}$-modules. Then $M$ is anti-nilpotent if and only if so are $L$ and $N$.

\item
Let $L \to M \xrightarrow{\alpha} N$ be an exact sequence of $R\{F\}$-modules such that $L$ is anti-nilpotent and $F$ acts injectively on $N$. Then $F$ acts injectively on $M$.
\end{enumerate}
\end{lemma}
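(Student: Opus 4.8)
The plan is to reduce both parts to the elementary observation that in a short exact sequence $0 \to A \to B \to C \to 0$ of $R\{F\}$-modules, if $F$ acts injectively on both $A$ and $C$, then it acts injectively on $B$. This is a one-line diagram chase: if $F(b)=0$, then its image in $C$ is killed by $F$, hence is zero by injectivity on $C$, so $b$ comes from $A$, and injectivity on $A$ forces $b=0$. With this in hand, the entire content of the lemma lies in feeding the anti-nilpotency hypotheses the correct $F$-compatible submodules.

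For part (1), identify $L$ with its image, an $F$-compatible submodule of $M$, so that $N \cong M/L$. For the forward implication, assume $M$ is anti-nilpotent. Given an $F$-compatible submodule $\bar K = K/L \subseteq N$, the preimage $K$ is $F$-compatible in $M$ and $N/\bar K \cong M/K$ carries an injective $F$-action; hence $N$ is anti-nilpotent. Given an $F$-compatible submodule $K \subseteq L$, it is also $F$-compatible in $M$, and $L/K \hookrightarrow M/K$ is an $R\{F\}$-submodule of a module on which $F$ acts injectively, so $F$ acts injectively on $L/K$; hence $L$ is anti-nilpotent. For the converse, assume $L$ and $N$ are anti-nilpotent and let $K \subseteq M$ be $F$-compatible. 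The second isomorphism theorem furnishes a short exact sequence of $R\{F\}$-modules
$$
0 \to L/(L \cap K) \to M/K \to M/(L+K) \to 0,
$$
in which $L \cap K$ is $F$-compatible in $L$, so $F$ is injective on the left term by anti-nilpotency of $L$, and $(L+K)/L$ is $F$-compatible in $N$ with $M/(L+K) \cong N/\big((L+K)/L\big)$, so $F$ is injective on the right term by anti-nilpotency of $N$. The elementary observation then yields injectivity of $F$ on $M/K$, proving $M$ anti-nilpotent.

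For part (2), write $\phi \colon L \to M$ for the first map and set $K = \im(\phi) = \ker(\alpha)$, an $F$-compatible submodule of $M$. Then $M/K \cong \im(\alpha) \subseteq N$ inherits an injective $F$-action from $N$. Now suppose $F(m)=0$ for some $m \in M$. The image of $m$ in $M/K$ is killed by $F$, hence is zero, so $m = \phi(l)$ for some $l \in L$. The identity $\phi(F(l)) = F(\phi(l)) = F(m) = 0$ shows $F(l) \in \ker(\phi)$, which is $F$-compatible in $L$; thus the image of $l$ in $L/\ker(\phi)$ is killed by $F$, and anti-nilpotency of $L$ forces $l \in \ker(\phi)$, whence $m = \phi(l) = 0$.

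The main obstacle—indeed the only place where the strength of the hypothesis is genuinely used—is the final step of part (2): it is essential that $L$ be anti-nilpotent rather than merely $F$-injective, because one must invoke injectivity of $F$ on the quotient $L/\ker(\phi)$ by the typically nonzero $F$-compatible submodule $\ker(\phi)$. Likewise, the backward direction of part (1) is the substantive half, since it requires combining the anti-nilpotency of $L$ and of $N$ through the canonically associated short exact sequence above; the forward direction of (1) and the remaining bookkeeping in (2) are routine.
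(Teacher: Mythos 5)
Your proposal is correct and follows essentially the same route as the paper: part (2) is exactly the paper's argument via the short exact sequence $0 \to \ker(\alpha) \to M \to \im(\alpha) \to 0$, with injectivity on $\ker(\alpha) \cong L/\ker(\phi)$ coming from anti-nilpotency of $L$ (the paper's ``subquotient'' step, which you correctly flag as the place where anti-nilpotency, not mere injectivity, is needed). The paper dismisses part (1) as clear, and your second-isomorphism-theorem argument is precisely the standard verification it leaves to the reader.
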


\begin{proof}
$\rm(1)$ is clear.

$\rm(2)$ We have a short exact sequence:
$$
0\rightarrow \ker(\alpha) \to M \rightarrow \im(\alpha)\rightarrow 0.
$$
Then $F$ acts injectively on $\ker(\alpha)$, since $\ker(\alpha)$ is an $R\{F\}$-subquotient of $L$ and $L$ is anti-nilpotent. Moreover, $F$ acts injectively on $\im(\alpha)$, since $\im(\alpha)$ is an $R\{F\}$-submodule of $N$ and $F$ acts injectively on $N$. Now $F$ acts injectively on $M$.
\end{proof}

It is clear that a stably $FH$-finite local ring is $F$-injective. We need the following result in the sequel (cf. \cite[Theorem 2.3 and Theorem 3.8]{M14}).

\begin{theorem}
\label{anti-nilpotent}
Let $(R,\fm,k)$ be an $F$-pure local ring. Then the local cohomology modules $H^i_{\fm}(R)$ are anti-nilpotent for all $i \ge 0$ i.e., $R$ is stably $FH$-finite.
\end{theorem}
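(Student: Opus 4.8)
The plan is to reduce to the $F$-finite complete case, exploit an honest splitting of the Frobenius of the \emph{ring}, and then pass to Matlis duals, where $F$-purity becomes a surjectivity statement for a Cartier operator. The main difficulty will be promoting a statement about the whole module to one about all of its $F$-compatible submodules.

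First I would reduce to the case that $R$ is $F$-finite and complete. Using the $\Gamma$-construction recalled above, replace $R$ by $\widehat{R}^{\Gamma}$ for a sufficiently small cofinite $\Gamma$: this is $F$-finite and $F$-pure, and the map $R \to \widehat{R}^{\Gamma}$ is faithfully flat and purely inseparable, so the local cohomology together with its Frobenius action is obtained by flat base change and anti-nilpotency descends. Since $R$ is now $F$-finite, $F$-purity coincides with $F$-splitness (as noted in the text), so I fix an $R$-linear splitting $\phi\colon F_*R \to R$ with $\phi \circ F = \mathrm{id}_R$. Applying the additive functor $H^i_{\fm}(-)$ to the split monomorphism $F\colon R \to F_*R$ already produces a split monomorphism $H^i_{\fm}(R) \to F_*H^i_{\fm}(R)$, so the Frobenius action is injective on every $H^i_{\fm}(R)$; this settles the compatible submodule $N=0$, and via Lemma \ref{antilemma}(2) it lets me feed partial injectivity up short exact sequences.

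The heart of the matter is to promote this injectivity (the case $N=0$) to injectivity on $H^i_{\fm}(R)/N$ for every $F$-compatible $N$. Writing $M=H^i_{\fm}(R)$, a short snake-lemma computation reformulates anti-nilpotency as the intersection condition $N \cap F(M) = F(N)$ for every $F$-compatible $N$. Equivalently, dualizing by Matlis duality (available because the complete $F$-finite ring $R$ has a dualizing complex by Remark \ref{dualizing}), the $F$-compatible submodules $N \subseteq M$ correspond to Cartier-stable quotients of the Cartier module $D:=M^{\vee}$, and anti-nilpotency of $M$ is equivalent to the dualized Cartier operator $\kappa$ being surjective on every Cartier-stable submodule of $D$. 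The splitting $\phi$ dualizes to a section of $\kappa$, so $\kappa$ is split surjective on $D$ itself.

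The main obstacle is precisely the gap between surjectivity of $\kappa$ on $D$ and surjectivity on all Cartier-stable submodules: a splitting of a single module does not see arbitrary $F$-compatible submodules, which is exactly why $F$-injectivity alone does not force anti-nilpotency. To close this gap I would use that the splitting is ring-theoretic and hence functorial, together with Hartshorne--Speiser--Lyubeznik type finiteness for the coherent Cartier module $D$: the nilpotent filtration of any quotient stabilizes, and a Cartier-stable submodule on which $\kappa$ failed to be surjective would produce a nonzero $F$-nilpotent subquotient of $H^i_{\fm}(R/I)$ for some $I$, contradicting that $\phi$ induces a compatible, split-surjective Cartier structure on the local cohomology of every quotient. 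Assembling this dévissage along Lemma \ref{antilemma}(1), so that anti-nilpotency is inherited by and reconstructed from the terms of short exact sequences of $R\{F\}$-modules, then yields that $H^i_{\fm}(R)$ is anti-nilpotent for all $i$, i.e.\ that $R$ is stably $FH$-finite.
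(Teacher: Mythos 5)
The first thing to note is that the paper contains no proof of Theorem \ref{anti-nilpotent} at all: it is quoted from Ma \cite[Theorems 2.3 and 3.8]{M14}, with the phrase ``we need the following result in the sequel.'' So your proposal is being measured against Ma's theorem itself, not against an in-paper argument. Your preliminary reductions are sound: the passage to the complete $F$-finite case via the $\Gamma$-construction (with anti-nilpotency descending along the faithfully flat, purely inseparable map), the identification of $F$-purity with $F$-splitness there, the observation that a ring splitting $\phi\colon R^{1/p}\to R$ makes $H^i_{\fm}(R)\to H^i_{\fm}(R^{1/p})$ split injective (settling the case $N=0$), and the Matlis-dual reformulation of anti-nilpotency as surjectivity of the Cartier operator $\kappa$ on every $\kappa$-stable submodule of $D=H^i_{\fm}(R)^{\vee}$ are all correct and standard.

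The genuine gap is exactly the step you flag as ``the main obstacle,'' and the mechanism you propose to close it does not work. First, an arbitrary $F$-compatible submodule $N\subseteq H^i_{\fm}(R)$ (dually, an arbitrary $\kappa$-stable submodule of $D$) simply does not arise from an ideal $I\subseteq R$, so there is no module $H^i_{\fm}(R/I)$ against which to run a Hartshorne--Speiser--Lyubeznik nilpotency contradiction; HSL gives you a uniform bound $F^e(N')\subseteq N$ for the nilpotent closure, but no contradiction by itself. Second, even for those submodules that are cut out by ideals, the fixed splitting $\phi$ descends to $R/I$ only when $I$ is compatible with $\phi$, i.e.\ $\phi(I^{1/p})\subseteq I$; stability under the Frobenius action ($F$-compatibility) does not imply compatibility with the chosen splitting, so your assertion that ``$\phi$ induces a compatible, split-surjective Cartier structure on the local cohomology of every quotient'' is unfounded --- it presupposes, in dual form, precisely the statement being proved, namely that $\kappa$ is surjective on every stable submodule rather than merely on $D$. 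That some genuinely new input is needed is visible from the paper itself: by the Remark citing \cite[Example 2.16]{EH08}, a Cohen--Macaulay $F$-injective local ring can have non-anti-nilpotent local cohomology, so injectivity plus an abstract module-level splitting cannot suffice, and any correct argument must deploy purity against all compatible submodules simultaneously --- this is the hard core of Ma's theorem, which he establishes by a substantially more delicate analysis of $F$-stable submodules over a regular presentation, not by the d\'evissage you sketch. Finally, your appeal to Lemma \ref{antilemma}(1) is inert as written, since you never produce short exact sequences of $R\{F\}$-modules whose outer terms are already known to be anti-nilpotent.
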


We prove the main result of this section, which is a variation of \cite[Proposition 4.8]{Sch09}.

\begin{theorem}
\label{geometry}
Let $(R,\fm,k)$ be a local ring of characteristic $p$. Suppose there exist ideals $I,J$ of $R$ such that $R/(I+J)$ is $F$-pure, $R/I$ and $R/J$ are $F$-injective. Then $R/(I \cap J)$ is $F$-injective.
\end{theorem}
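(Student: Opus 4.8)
The plan is to exploit the Mayer--Vietoris short exact sequence
$$
0 \to R/(I \cap J) \to R/I \oplus R/J \to R/(I+J) \to 0,
$$
where the first map is $\bar x \mapsto (\bar x, \bar x)$ and the second is $(\bar a, \bar b) \mapsto \overline{a - b}$. Exactness is a routine check: the kernel of the first map is $(I \cap J)/(I \cap J)$, and a pair $(\bar a, \bar b)$ lies in the kernel of the second map precisely when $a - b = i + j$ with $i \in I$ and $j \in J$, in which case $x := a - i = b + j$ maps to it. Crucially, since $(a-b)^p = a^p - b^p$ in characteristic $p$, both maps commute with the Frobenius endomorphisms of the four quotient rings, so this is a short exact sequence of $R\{F\}$-modules.

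First I would apply $H^\bullet_{\fm}(-)$ to obtain the long exact sequence
$$
\cdots \to H^{i-1}_{\fm}(R/(I+J)) \to H^i_{\fm}(R/(I \cap J)) \xrightarrow{\alpha} H^i_{\fm}(R/I) \oplus H^i_{\fm}(R/J) \to \cdots,
$$
in which every module carries a natural Frobenius action and every map is Frobenius-equivariant, because local cohomology is functorial and the displayed short exact sequence consists of $R\{F\}$-modules. The goal is then to show that $F$ acts injectively on $H^i_{\fm}(R/(I \cap J))$ for every $i \ge 0$.

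The key step is to feed the three-term exact piece
$$
H^{i-1}_{\fm}(R/(I+J)) \to H^i_{\fm}(R/(I \cap J)) \xrightarrow{\alpha} H^i_{\fm}(R/I) \oplus H^i_{\fm}(R/J)
$$
into Lemma \ref{antilemma}(2). For this I would observe that: (i) since $R/(I+J)$ is $F$-pure, Theorem \ref{anti-nilpotent} gives that $H^{i-1}_{\fm}(R/(I+J))$ is anti-nilpotent (for $i = 0$ this module is zero and the claim is vacuous); and (ii) since $R/I$ and $R/J$ are $F$-injective, $F$ acts injectively on $H^i_{\fm}(R/I)$ and on $H^i_{\fm}(R/J)$, hence on their direct sum. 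Taking $L = H^{i-1}_{\fm}(R/(I+J))$, $M = H^i_{\fm}(R/(I \cap J))$ and $N = H^i_{\fm}(R/I) \oplus H^i_{\fm}(R/J)$, Lemma \ref{antilemma}(2) yields that $F$ acts injectively on $M$. As $i$ is arbitrary, $R/(I \cap J)$ is $F$-injective.

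The main obstacle I anticipate is bookkeeping rather than conceptual: one must confirm that the Mayer--Vietoris sequence is genuinely a sequence of $R\{F\}$-modules, so that all connecting and transition maps commute with $F$, and then verify that the precise hypotheses of Lemma \ref{antilemma}(2) line up with what the assumptions provide. The reason $F$-purity (rather than mere $F$-injectivity) of $R/(I+J)$ is imposed is exactly that Lemma \ref{antilemma}(2) demands the stronger \emph{anti-nilpotency} of the incoming term $L$, which Theorem \ref{anti-nilpotent} supplies only for $F$-pure rings.
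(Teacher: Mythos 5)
Your proof is correct and is essentially the paper's own argument: the same Mayer--Vietoris short exact sequence of $R\{F\}$-modules, the same invocation of Theorem \ref{anti-nilpotent} to get anti-nilpotency of $H^{i-1}_{\fm}(R/(I+J))$ from $F$-purity, and the same application of Lemma \ref{antilemma}(2) to the three-term piece of the long exact sequence. The only (immaterial) difference is your sign convention in the two maps, versus the paper's $p_1(a)=(a,-a)$ and $p_2(a,b)=a+b$.
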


\begin{proof}
There is a short exact sequence
$$
0 \to R/(I \cap J) \xrightarrow{p_1} R/I \oplus R/J \xrightarrow{p_2} R/(I+J) \to 0,
$$
where $p_1(a)=(a,-a)$ and $p_2(a,b)=a+b$ and this is compatible with the Frobenius. Taking local cohomology, we get
$$
\begin{CD}
@>>> H^i_{\fm}(R/(I+J)) @>h>> H^{i+1}_{\fm}(R/(I \cap J)) @>g>> H^{i+1}_{\fm}(R/I) \oplus H^{i+1}_{\fm}(R/J) @>>> \\
@. @VF_3VV @VF_1VV @VF_2VV \\
@>>> H^i_{\fm}(R/(I+J)) @>h>> H^{i+1}_{\fm}(R/(I \cap J)) @>g>> H^{i+1}_{\fm}(R/I) \oplus H^{i+1}_{\fm}(R/J) @>>> \\
\end{CD}
$$
By assumption, $R/(I+J)$ is $F$-pure and its local cohomology is anti-nilpotent by Theorem \ref{anti-nilpotent}. That is, the Frobenius action on $\im(h)$ induced by $F_3$ is injective. Now we conclude that $R/(I \cap J)$ is $F$-injective by Lemma \ref{antilemma} (2).
\end{proof}

With the same idea of the proof of Theorem \ref{geometry} together with Lemma \ref{antilemma} (1), we have the following result.

\begin{theorem}
\label{stably FH}
Let $(R,\fm,k)$ be a local ring of characteristic $p$. Suppose that there exist ideals $I,J$ of $R$ such that $R/I$, $R/J$ and $R/(I+J)$ are stably $FH$-finite. Then $R/(I \cap J)$ is stably $FH$-finite.
\end{theorem}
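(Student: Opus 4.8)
The plan is to recycle the Frobenius-compatible short exact sequence from the proof of Theorem \ref{geometry},
$$
0 \to R/(I \cap J) \xrightarrow{p_1} R/I \oplus R/J \xrightarrow{p_2} R/(I+J) \to 0,
$$
and to propagate anti-nilpotency through the associated long exact sequence of local cohomology. The essential difference from Theorem \ref{geometry} is that now all three of $R/I$, $R/J$ and $R/(I+J)$ are stably $FH$-finite, so every local cohomology module occurring in the long exact sequence is anti-nilpotent. Because Lemma \ref{antilemma} (1) makes anti-nilpotency descend to both submodules and quotients, we can then read off the anti-nilpotency of $H^n_{\fm}(R/(I\cap J))$ in every degree $n$, with no need to separately establish injectivity of any Frobenius action as was done in Theorem \ref{geometry}.

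Concretely, I would apply $H^\bullet_{\fm}(-)$ to the sequence above to obtain a long exact sequence of $R\{F\}$-modules in which, for each $n \ge 0$, there are Frobenius-compatible maps
$$
h_n \colon H^{n-1}_{\fm}(R/(I+J)) \to H^n_{\fm}(R/(I\cap J)), \qquad g_n \colon H^n_{\fm}(R/(I\cap J)) \to H^n_{\fm}(R/I) \oplus H^n_{\fm}(R/J),
$$
with the convention $H^{-1}_{\fm}(-)=0$. Exactness yields, for every $n$, the short exact sequence of $R\{F\}$-modules
$$
0 \to \im(h_n) \to H^n_{\fm}(R/(I\cap J)) \to \im(g_n) \to 0.
$$

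It remains to check that both outer terms are anti-nilpotent. The module $\im(h_n)$ is an $R\{F\}$-quotient of $H^{n-1}_{\fm}(R/(I+J))$, which is anti-nilpotent because $R/(I+J)$ is stably $FH$-finite; hence $\im(h_n)$ is anti-nilpotent by the quotient half of Lemma \ref{antilemma} (1). For $\im(g_n)$, first observe that $H^n_{\fm}(R/I) \oplus H^n_{\fm}(R/J)$ is anti-nilpotent: apply Lemma \ref{antilemma} (1) to the split exact sequence
$$
0 \to H^n_{\fm}(R/I) \to H^n_{\fm}(R/I) \oplus H^n_{\fm}(R/J) \to H^n_{\fm}(R/J) \to 0,
$$
using that $R/I$ and $R/J$ are stably $FH$-finite. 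Since $\im(g_n)$ is an $R\{F\}$-submodule of this direct sum, the submodule half of Lemma \ref{antilemma} (1) shows it is anti-nilpotent as well. Applying Lemma \ref{antilemma} (1) a final time to the displayed short exact sequence then gives that $H^n_{\fm}(R/(I\cap J))$ is anti-nilpotent.

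Since $n$ was arbitrary, all local cohomology modules of $R/(I\cap J)$ are anti-nilpotent, i.e. $R/(I\cap J)$ is stably $FH$-finite. The only genuine care required is bookkeeping: one must make sure every arrow in the long exact sequence is $R\{F\}$-equivariant so that Lemma \ref{antilemma} (1) applies verbatim. There is no analytic obstacle here, as the two-sided heredity of anti-nilpotency under sub- and quotient modules does all the work --- which is precisely why the hypotheses of this theorem are symmetric in the three quotients, in contrast to Theorem \ref{geometry}.
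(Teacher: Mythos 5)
Your proof is correct and is exactly the argument the paper intends: the paper proves Theorem \ref{stably FH} by the one-line remark that it follows from the short exact sequence $0 \to R/(I \cap J) \to R/I \oplus R/J \to R/(I+J) \to 0$ used in Theorem \ref{geometry}, combined with Lemma \ref{antilemma} (1). Your write-up simply fills in the details of that sketch --- splitting the long exact sequence into $0 \to \im(h_n) \to H^n_{\fm}(R/(I\cap J)) \to \im(g_n) \to 0$ and applying both directions of Lemma \ref{antilemma} (1) --- all of which is sound.
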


Theorem \ref{geometry} and Theorem \ref{stably FH} are useful in the construction of examples of non-Cohen-Macaulay local rings which are $F$-injective and not $F$-pure. To the best of authors' knowledge, examples of such type do not abound in literatures. We examine the construction of such local rings in the next section.

\section{Examples}
\label{sec5}
\subsection{Patching $F$-injective closed subschemes}

The aim of this section is to give an explicit example of $F$-injective ring with a parameter ideal that is not Frobenius closed. To do that, we need examples of $F$-injective rings that are neither $F$-pure nor generalized Cohen-Macaulay (with ``patching'' from the previous section). We start with a well-known example of Fedder \cite{F83} and Singh \cite{Sin99}.

\begin{lemma}
\label{singh}
Let $K$ be a perfect field of characteristic $p>0$ and let
$$
R:=K[[U,V,Y,Z]]/(UV,UZ,Z(V-Y^2)).
$$
Then $R$ is stably $FH$-finite (so $F$-injective).
\end{lemma}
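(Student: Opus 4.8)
The plan is to realize $R=S/\fc$, where $S=K[[U,V,Y,Z]]$ and $\fc=(UV,UZ,Z(V-Y^2))$, as a quotient by an intersection of two ideals whose associated quotients are already known to be stably $FH$-finite, and then to invoke the patching result Theorem~\ref{stably FH}. Set
\[
\fa=(UV,Z),\qquad \fb=(U,V-Y^2).
\]
The first step is to establish the ideal identity $\fc=\fa\cap\fb$. The inclusion $\fc\subseteq\fa\cap\fb$ is checked generator by generator: each of $UV$, $UZ$, $Z(V-Y^2)$ manifestly lies in both $\fa$ and $\fb$. For the reverse inclusion I would write a general element of $\fa$ as $f=aUV+bZ$; since $aUV\in(U)\subseteq\fb$, membership $f\in\fb$ forces $bZ\in\fb$. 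Here I use that $\fb$ is prime (because $S/\fb\cong K[[Y,Z]]$ is a domain) and that $Z\notin\fb$, so $b\in\fb$, say $b=cU+d(V-Y^2)$. Substituting gives $f=aUV+cUZ+dZ(V-Y^2)\in\fc$, as desired.

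Next I would identify the three quotient rings entering the hypothesis of the patching theorem. One has $S/\fb\cong K[[Y,Z]]$ (kill $U$ and set $V=Y^2$) and $S/(\fa+\fb)=S/(U,Z,V-Y^2)\cong K[[Y]]$, both regular and hence $F$-pure. The remaining quotient is $S/\fa\cong K[[U,V,Y]]/(UV)$, which is $F$-pure by Fedder's criterion: with $f=UV$ one has $f^{p-1}=U^{p-1}V^{p-1}\notin\fm^{[p]}=(U^p,V^p,Y^p)$. By Theorem~\ref{anti-nilpotent}, each of $S/\fa$, $S/\fb$, $S/(\fa+\fb)$ is therefore stably $FH$-finite.

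Finally, applying Theorem~\ref{stably FH} to the ideals $\fa,\fb$ of $S$ yields that $S/(\fa\cap\fb)=S/\fc=R$ is stably $FH$-finite, and in particular $F$-injective. I expect the only genuine content to lie in the intersection computation $\fc=\fa\cap\fb$, and the crucial point is the choice of grouping: because $\fa=(U,Z)\cap(V,Z)$ already absorbs two of the three coordinate-type components while $\fb$ is the parabola component $(U,V-Y^2)$, the sum $\fa+\fb$ stays regular. A naive pairing instead of the two $U$-containing components $(U,Z)$ and $(U,V-Y^2)$ would produce the quotient $K[[Y]]/(Y^2)$, which is not even $F$-injective since Frobenius kills $Y$; this obstruction is precisely what the chosen decomposition avoids. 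The remaining verifications are routine.
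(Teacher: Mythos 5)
Your proposal is correct and follows essentially the same route as the paper: the paper also writes $(UV,UZ,Z(V-Y^2))=(U,V-Y^2)\cap(Z,UV)$, observes that $S/(U,V-Y^2)$ and $S/(U,V-Y^2,Z)$ are regular while $S/(Z,UV)$ is $F$-pure (citing Hochster--Roberts for the square-free ideal where you invoke Fedder's criterion, a cosmetic difference), and concludes via Theorems~\ref{anti-nilpotent} and~\ref{stably FH}. Your explicit verification of the intersection identity, which the paper merely asserts, is a welcome addition and is carried out correctly.
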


\begin{proof}
Let $S = K[[U,V,Y,Z]]$. Note that
$$
(UV,UZ,Z(V-Y^2))=(U, V-Y^2) \cap (Z, U) \cap (Z,V) = (U, V-Y^2) \cap (Z, UV)
$$
Let $I = (U, V-Y^2)$ and $J = (Z, UV)$, so $I + J = (U, V-Y^2, Z)$. Therefore $S/I$ and $S/(I+J)$ are regular rings and $S/J$ is $F$-pure, since $J$ is a square-free ideal (cf. \cite[Proposition 5.38]{HR76}). By Theorems \ref{anti-nilpotent} and \ref{stably FH} we have $R \cong S/(I \cap J)$ is stably $FH$-finite. \end{proof}

\begin{remark}
Let $R$ be the ring as in Lemma \ref{singh}. We have the following.
\begin{enumerate}
\item
By \cite[Corollary 4.14]{HMS14}, $F$-purity deforms $F$-injectivity. We can prove that $R$ is $F$-injective in the following way. Let $u,v,y$ and $z$ denote the image of $U, V, Y$ and $Z$ in $R$ (and its quotients), respectively. Then $y$ is a regular element of $R$ and $R/(y) \cong K[[U, V, Z]]/(UV, UZ, VZ)$ is an $F$-pure ring by \cite[Proposition 5.38]{HR76}. Therefore $R$ is $F$-injective.

\item
By \cite[Example 3.2]{Sin99}, $R$ is not $F$-pure. However, we can check that $R$ is Cohen-Macaulay so every parameter ideal of $R$ is Frobenius closed.
\end{enumerate}
\end{remark}

We now use Theorems \ref{geometry} and \ref{stably FH} to construct $F$-injective rings that are neither $F$-pure nor generalized Cohen-Macaulay. We will produce all necessary computations.

\begin{example}
\label{E6.1}
Let $K$ be a perfect field of characteristic $p>0$ and let
$$
R:=K[[U,V,Y,Z,T]]/(T) \cap (UV,UZ,Z(V-Y^2)).
$$
This is an $F$-finite non-equidimensional local ring of dimension 4. Let $u,v,y, z$ and $t$ denote the image of $U, V, Y, Z$ and $T$ in $R$ (or the quotient ring of $R$), respectively. Let $S = K[[U,V,Y,Z,T]]$, $I = (T)$ and $J = (UV,UZ,Z(V-Y^2))$. By the similar method as in the proof of Lemma \ref{singh}, we find that $R$ is stably $FH$-finite, so is $F$-injective.

On the other hand, the only associated prime ideal $\fp \in \Spec R$ such that $R/\fp=4$ is $(t)$. Let $a:=y^2(u^2-z^4)$. Then $a$ is a parameter element of $R$. Since $a \in (u,z)$, we see that $a \in R$ is a zero divisor. Now we prove that the ideal $(a) \subseteq R$ is not Frobenius closed. We have $zvt=zy^2t$ and $y^{2p}u^{2p}=y^{2p}z^{4p}$ in the ring $R/(a)^{[p]}$ and it follows that
$$
(y^3z^4t)^p=y^{3p}z^{4p}t^p=y^{3p-2}y^2z^{4p}t^p
=y^{3p-2}vz^{4p}t^p=y^{3p-2}u^{2p}vt^p=0
$$
in $R/(a)^{[p]}$. That is, we have $(y^3z^4t)^p \in (a)^{[p]}$. Next consider the equation
$$
Y^3Z^4T=A(Y^2(U^2-Z^4))+B(TUV)+C(TUZ)+D(TZ(V-Y^2))
$$
in $K[[U,V,Y,Z,T]]$. We have $A = TA'$. Then dividing this equation out by $T$, we simply get
$$
Y^3Z^4=A'(Y^2(U^2-Z^4))+B(UV)+C(UZ)+D(Z(V-Y^2)).
$$
Taking this equation modulo $(U,V)$, we have $A'=-Y$ and $Y^3U=B(V)+C(Z)+D'(Z(V-Y^2))$, where $D=UD'$. Thus $Y^3U \in (V,Z)$ and this is impossible. Hence $y^3z^4t \notin (a)$ and $(a)$ is not Frobenius closed. Thus $R$ is $F$-injective, but not $F$-pure. Moreover, a generalized Cohen-Macaulay local ring is equidimensional, so $R$ is not generalized Cohen-Macaulay.
\end{example}

Based on the previous example, we can construct a local ring that is equidimensional, $F$-injective but not generalized Cohen-Macaulay and not $F$-pure.

\begin{example}\label{E6.2}
Let $K$ be a perfect field of characteristic $p>0$ and let
$$
R:=K[[U,V,Y,Z,T,S]]/(T, S) \cap (UV,UZ,Z(V-Y^2)).
$$
This is an equidimensional local ring of dimension 4. As in the the previous example, we find that $R$ is stably $FH$-finite, so it is $F$-injective. To show that $R$ is not $F$-pure, consider
$$
Y^3Z^4T=A(Y^2(U^2-Z^4))+B(TUV)+C(TUZ)+D(TZ(V-Y^2))
$$
$$
+E(SUV)+F(SUZ)+G(SZ(V-Y^2))
$$
in the ring $K[[U,V,Y,Z,T,S]]$. Taking this equation modulo $(S)$, we get
$$
Y^3Z^4T=A(Y^2(U^2-Z^4))+B(TUV)+C(TUZ)+D(TZ(V-Y^2))
$$
in $K[[U,V,Y,Z,T]]$. As in the previous example, we see that the ideal $(y^2(u^2-z^4))$ is not Frobenius closed and thus, $R$ is not $F$-pure. Set $I=(T, S)$ and $J=(UV,UZ,Z(V-Y^2))$. Moreover, both $R/I$ and $R/J$ are Cohen-Macaulay rings and we have $\dim R/I=\dim R/J=4$
and $\dim R/(I+J) = 2$. Applying the local cohomology for the short exact sequence
$$
0 \to R \to R/I \oplus R/J \to R/(I+J) \to 0,
$$
we get the following exact sequence
$$
0 \to H^2_{\fm}(R/(I+J)) \to H^3_{\fm}(R) \to H^3_{\fm}(R/I) \oplus H^3_{\fm}(R/J)=0.
$$
Thus $H^3_{\frak m}(R) \cong H^2_{\fm}(R/(I+J))$ does not have finite length by Grothendieck's non-vanishing theorem (cf. \cite[Corollary 7.3.3]{BS98}). So $R$ is not generalized Cohen-Macaulay.
\end{example}

The above examples prove the following main result of this section.

\begin{theorem}
\label{counterexample}
Let the ring $R$ and the notation be as in Example \ref{E6.1}. Then $R$ is $F$-injective and has a parameter ideal that is not Frobenius closed.
\end{theorem}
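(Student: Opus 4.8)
The statement simply consolidates the two facts established by the explicit analysis of Example~\ref{E6.1}, so the plan is to assemble them. There are two independent things to record: that $R$ is $F$-injective, and that $R$ admits an ideal generated by a parameter element which is not Frobenius closed. I would deduce the first from the patching machinery of \S\ref{sec4} and the second from one explicit element together with an explicit witness for the closure.

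For $F$-injectivity, I would set $S = K[[U,V,Y,Z,T]]$, $I = (T)$ and $J = (UV, UZ, Z(V-Y^2))$, so that the defining ideal of $R$ is exactly $I \cap J$. Then $R/I \cong K[[U,V,Y,Z]]$ is regular, $R/(I+J)$ is precisely the Singh ring of Lemma~\ref{singh}, and $R/J$ is the power-series extension of that ring by $T$; all three are stably $FH$-finite. Applying Theorem~\ref{stably FH} to the triple $R/I$, $R/J$, $R/(I+J)$ then gives that $R = S/(I \cap J)$ is stably $FH$-finite, hence $F$-injective.

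For the closure failure, I would take the parameter element $a = y^2(u^2 - z^4)$ and the candidate element $w = y^3 z^4 t$. On one side, the defining relations yield $tz(v - y^2) = 0$ and $tuv = 0$ in $R$, while $a^p = y^{2p}(u^{2p} - z^{4p})$ gives $y^{2p}u^{2p} = y^{2p}z^{4p}$ modulo $(a)^{[p]}$; chaining these substitutions collapses $w^p = y^{3p}z^{4p}t^p$ to $0$ in $R/(a)^{[p]}$, so that $w \in (a)^F$. On the other side, lifting a hypothetical membership $w \in (a)$ to $S$, cancelling $T$, and reducing modulo $(U,V)$ forces the impossible relation $Y^3 U \in (V,Z)$; hence $w \notin (a)$ and $(a)$ is not Frobenius closed.

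I expect the non-membership $w \notin (a)$ to be the only real obstacle. The inclusion $w^p \in (a)^{[p]}$ is a careful but mechanical chain of rewrites using the three relations above, whereas excluding $w \in (a)$ requires pulling the generating equation back up into the regular ring $S$ and choosing the right specialization---the reduction modulo $(U,V)$, which isolates the contradiction $Y^3 U \in (V,Z)$. Everything else is routine bookkeeping with the stably $FH$-finite patching lemmas.
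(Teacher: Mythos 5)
Your computations are the right ones --- the chain of rewrites showing $b:=y^3z^4t$ satisfies $b^p\in (a)^{[p]}$ for $a=y^2(u^2-z^4)$, and the lift-to-$S$, cancel-$T$, reduce-mod-$(U,V)$ argument showing $b\notin (a)$ are exactly those of Example \ref{E6.1}, and the $F$-injectivity part is also contained in that example (which the theorem presupposes). But there is a genuine gap at the end: in this paper a \emph{parameter ideal} means an ideal generated by a full system of parameters, and $\dim R=4$, so the principal ideal $(a)$ is \emph{not} a parameter ideal --- it is generated by a single parameter element. Your proposal establishes only that this one-element ideal fails to be Frobenius closed (you even paraphrase the statement as ``an ideal generated by a parameter element which is not Frobenius closed''), which is strictly weaker than the theorem's claim and is precisely the distinction that matters for Question \ref{questionMa} and Main Theorem B.

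The missing step, which is the actual content of the paper's proof of Theorem \ref{counterexample}, goes as follows. Since $(t)$ is the unique minimal prime of $R$ with $\dim R/(t)=4$, the element $a$ is a parameter element, so it extends to a full system of parameters $a,x_2,x_3,x_4$. By the Krull intersection theorem,
$$
(a)=\bigcap_{n\ge 1}(a,x_2^n,x_3^n,x_4^n),
$$
and since $b\notin(a)$, one can choose $n\gg 0$ with $b\notin(a,x_2^n,x_3^n,x_4^n)$; on the other hand $b\in(a)^F\subseteq (a,x_2^n,x_3^n,x_4^n)^F$, so the genuine parameter ideal $(a,x_2^n,x_3^n,x_4^n)$ is not Frobenius closed. (Alternatively one could argue contrapositively via \cite[Lemma 3.1]{M15}, which recovers Frobenius closure of part-of-s.o.p.\ ideals from that of parameter ideals --- but some such step must be made explicitly.) A smaller point on the patching half: you apply Theorem \ref{stably FH} with $R$ where you mean $S=K[[U,V,Y,Z,T]]$, and you assert that $S/J\cong(\text{Singh ring})[[T]]$ is stably $FH$-finite because the Singh ring is; the paper never proves that stable $FH$-finiteness passes to power-series extensions, and instead reruns the decomposition of Lemma \ref{singh} inside $S$, writing $J=(U,V-Y^2)\cap(Z,UV)$ so that $S/(U,V-Y^2)$ and $S/(U,V-Y^2,Z)$ are regular and $S/(Z,UV)$ is $F$-pure, whence Theorems \ref{anti-nilpotent} and \ref{stably FH} apply directly.
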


\begin{proof}
Let us construct a parameter ideal of $R$ that is not Frobenius closed. Since $(t)$ is the unique minimal associated prime of $R$ such that $\dim R/(t)=4$, the element $a=y^2(u^2-z^4)$ is a parameter element of $R$. Note that $a \in R$ is a zero divisor, because $a \in (u,z)$. Thus, we can extend it to a full system of parameters $a,x_2,x_3,x_4$ of $R$. By the Krull intersection theorem, we have
$$
(a) = \bigcap_{n \ge 1} (a,x_2^n,x_3^n,x_4^n).
$$
By choosing $n \gg 0$, we find that $b=y^3z^4t$ is not contained in the parameter ideal $(a,x_2^n,x_3^n,x_4^n)$. However, we have $b \in (a)^F \subseteq (a,x_2^n,x_3^n,x_4^n)^F$ as demonstrated in Example \ref{E6.1}. Hence $(a,x_2^n,x_3^n,x_4^n)$ is not Frobenius closed for $n \gg 0$.
\end{proof}

We have the following corollary.

\begin{corollary}
\label{F-example}
There exists an $F$-finite local ring $(R,\fm,k)$ of characteristic $p>0$ which is non-Cohen-Macaulay, $F$-injective, but not $F$-pure. Moreover, $R$ has a parameter ideal that is not Frobenius closed.
\end{corollary}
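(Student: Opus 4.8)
The plan is simply to take the concrete ring produced in Example \ref{E6.1}, namely
$$
R = K[[U,V,Y,Z,T]]/\big((T) \cap (UV,UZ,Z(V-Y^2))\big)
$$
for a perfect field $K$ of characteristic $p>0$, and to verify that this single ring simultaneously enjoys all five asserted properties by reading them off from results already in hand. Since essentially all of the substantive work was carried out in Example \ref{E6.1} and Theorem \ref{counterexample}, the corollary amounts to assembling those facts for this one ring.

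First I would record that $R$ is $F$-finite: because $K$ is perfect, the power series ring $K[[U,V,Y,Z,T]]$ is $F$-finite, and $F$-finiteness descends to homomorphic images, so $R$ is $F$-finite. Next, Example \ref{E6.1} shows—via the patching Theorems \ref{geometry} and \ref{stably FH} together with Theorem \ref{anti-nilpotent}—that $R$ is stably $FH$-finite, and in particular $F$-injective.

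For the failure of $F$-purity I would invoke the computation in Example \ref{E6.1} that the principal ideal $(a)$ with $a = y^2(u^2-z^4)$ is not Frobenius closed (indeed $y^3z^4t \in (a)^F \setminus (a)$); since every ideal of an $F$-pure ring is Frobenius closed, $R$ cannot be $F$-pure. For the non-Cohen-Macaulay property, Example \ref{E6.1} establishes that $R$ is not generalized Cohen-Macaulay, as it is non-equidimensional, its only minimal prime $\fp$ with $\dim R/\fp = 4$ being $(t)$; since every Cohen-Macaulay local ring is generalized Cohen-Macaulay, $R$ is not Cohen-Macaulay. Finally, the existence of a parameter ideal that is not Frobenius closed is exactly the content of Theorem \ref{counterexample}, where $a$ is extended to a system of parameters $a, x_2, x_3, x_4$ and $(a, x_2^n, x_3^n, x_4^n)$ is shown to be non-Frobenius-closed for $n \gg 0$.

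The only point requiring care—and the closest thing to an obstacle—is that all five properties must hold for one and the same ring rather than for an ad hoc collection of examples; this is automatic here because Example \ref{E6.1} and Theorem \ref{counterexample} all concern the single ring $R$ displayed above, so no separate reconciliation is needed.
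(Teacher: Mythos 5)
Your proposal is correct and is essentially the paper's own argument: the corollary is stated in the paper as an immediate consequence of Example \ref{E6.1} (which establishes that this ring is $F$-finite, stably $FH$-finite hence $F$-injective, not $F$-pure, and non-equidimensional hence not Cohen-Macaulay) and Theorem \ref{counterexample} (which extends $a=y^2(u^2-z^4)$ to a system of parameters and shows $(a,x_2^n,x_3^n,x_4^n)$ is not Frobenius closed for $n\gg 0$). Your added observations (F-finiteness via perfectness of $K$ and passage to a quotient, and that all five properties concern the single ring of Example \ref{E6.1}) are exactly the routine checks the paper leaves implicit.
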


\begin{remark}
\begin{enumerate}
\item
We cannot put the ring $R$ of Example \ref{E6.2} into Theorem \ref{counterexample} to deduce the same conclusion, because $a \in R$ is a not parameter element in this case. In view of Theorem \ref{maintheoremA} and Corollary \ref{Buchsbaum}, it seems hard to construct an example of an $F$-injective local domain with a parameter ideal that is not Frobenius closed. We make a useful comment on the construction of Buchsbaum rings. By \cite{G80}, it is possible to construct a Buchsbaum ring $(R,\fm,k)$ with $d=\dim R$ such that $\ell_R(H^i_{\fm}(R))=s_i$, where $s_0,\ldots,s_{d-1}$ is any assigned sequence of non-negative integers.

\item
It is worth noting that Theorem \ref{counterexample} also claims that the result of Theorem \ref{maintheoremA} is optimal. Indeed, we see that $\dim R=4$ and $f_{\fm}(R) = \depth (R) = 3$. By the prime avoidance theorem, we can choose a filter regular sequence $x_1, x_2, x_3$ (so it is a regular sequence) such that $x_1, x_2, x_3, a$ form a system of parameters of $R$. Note that $x_1, x_2, x_3, a$ form a filter regular sequence. Using the Krull intersection theorem as in the proof of Theorem \ref{counterexample} and Lemma \ref{filter} (3), we can assume that $b \notin (x_1,x_2,x_3,a)$. Therefore, we find that $(x_1,x_2,x_3)$ is Frobenius closed by Theorem \ref{maintheoremA}, but $(x_1,x_2,x_3,a)$ is not Frobenius closed.
\end{enumerate}
\end{remark}

\subsection{Parameter $F$-closed rings}

We introduce a new class of $F$-singularities.

\begin{definition}
\label{parameterF-closed}
Let $(R,\fm,k)$ be a local ring of characteristic $p>0$. We say that $R$ is a \textit{parameter F-closed ring} if every ideal generated by a system of parameters of $R$ is Frobenius closed.
\end{definition}

We prove that the property of being parameter $F$-closed commutes with localization. It should be noted that we do not require the $F$-finiteness condition as in Lemma \ref{L1.11}.

\begin{proposition}
Let $(R,\fm,k)$ be a local ring of characteristic $p>0$. Then $R$ is parameter $F$-closed if and only if $R_{\fp}$ is parameter $F$-closed for any $\fp \in \Spec R$.
\end{proposition}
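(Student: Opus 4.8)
The reverse implication is immediate: since $R$ is local, $R_{\fm}=R$, so if $R_{\fp}$ is parameter $F$-closed for every $\fp\in\Spec R$, then in particular $R=R_{\fm}$ is parameter $F$-closed. Thus the whole content lies in the forward direction, and the plan is to reduce it to a single purely local statement, which I would isolate as a lemma: \emph{if $R$ is parameter $F$-closed and $x_1,\ldots,x_h$ is part of a system of parameters of $R$, then the ideal $(x_1,\ldots,x_h)$ --- in general not a parameter ideal --- is already Frobenius closed.}

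First I would prove this lemma by a Krull-intersection trick. Extend $x_1,\ldots,x_h$ to a full system of parameters $x_1,\ldots,x_d$ of $R$. Given $u\in(x_1,\ldots,x_h)^F$, say $u^q\in(x_1,\ldots,x_h)^{[q]}$ with $q=p^e$, set $J_n:=(x_1,\ldots,x_h,x_{h+1}^n,\ldots,x_d^n)$ for $n\ge 1$; each $J_n$ is a parameter ideal of $R$ and $u^q\in(x_1,\ldots,x_h)^{[q]}\subseteq J_n^{[q]}$, so $u\in J_n^F=J_n$ by hypothesis. Hence $u\in\bigcap_{n\ge 1}J_n$, and passing to the Noetherian local ring $\overline{R}:=R/(x_1,\ldots,x_h)$ the image of $u$ lies in $\bigcap_{n\ge 1}(x_{h+1}^n,\ldots,x_d^n)\overline{R}\subseteq\bigcap_{n\ge 1}(x_{h+1},\ldots,x_d)^n\overline{R}=0$ by Krull's intersection theorem. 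Therefore $u\in(x_1,\ldots,x_h)$, as wanted. This step is the heart of the matter and the only place where the Frobenius hypothesis enters. (In particular, applying it to $(x_1^n)$ together with Lemma~\ref{reduced} shows $R$ is reduced as soon as $\dim R\ge 1$, which disposes of the height-zero primes treated below.)

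With the lemma available, fix $\fp\in\Spec R$, put $h=\Ht\fp$, and let $\fa$ be an arbitrary parameter ideal of $R_{\fp}$. Because Frobenius closure commutes with localization (Lemma~\ref{commute}), it suffices to write $\fa=(x_1,\ldots,x_h)R_{\fp}$ for elements $x_1,\ldots,x_h\in R$ that form part of a system of parameters of $R$: then the lemma gives $(x_1,\ldots,x_h)^F=(x_1,\ldots,x_h)$, whence $\fa^F=\big((x_1,\ldots,x_h)R_{\fp}\big)^F=(x_1,\ldots,x_h)^FR_{\fp}=\fa$. To obtain such generators I would begin with arbitrary lifts $y_1,\ldots,y_h\in\fp$ of a system of parameters generating $\fa$, fix $N$ with $\fp^{N}R_{\fp}\subseteq\fp R_{\fp}\cdot\fa$, and seek the $x_i$ inside the cosets $y_i+\fp^{N}$. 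Choosing them successively by prime avoidance (in the spirit of Lemma~\ref{basisfilter}) so that $x_i$ lies outside every minimal prime $\mathfrak{q}\ne\fm$ of $(x_1,\ldots,x_{i-1})$ with $\dim R/\mathfrak{q}=d-i+1$ forces $x_1,\ldots,x_h$ to be part of a system of parameters of $R$; and since every such $\mathfrak{q}$ satisfies $\Ht\mathfrak{q}\le i-1<h=\Ht\fp$, it cannot contain $\fp$, so the coset $y_i+\fp^{N}$ is never contained in $\mathfrak{q}$ and the avoidance is possible. Finally $(x_1,\ldots,x_h)R_{\fp}=\fa$ follows from $x_i\equiv y_i \bmod \fp^{N}$ together with Nakayama's lemma applied through $\fp^{N}R_{\fp}\subseteq\fp R_{\fp}\cdot\fa$.

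The main obstacle is this last reduction rather than the Frobenius input. In an equidimensional catenary ring a naive lift already works, but the examples in \S\ref{sec5} show that $R$ need not be equidimensional, and then $(y_1,\ldots,y_h)$ may acquire minimal primes of height smaller than $h$ but coheight larger than $d-h$, so a naive lift need not extend to a system of parameters of $R$. The coset prime-avoidance above, resting on the numerical fact that the offending primes cannot contain $\fp$, is exactly what repairs this, and I expect it to be the most delicate point to write out carefully.
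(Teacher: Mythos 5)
Your proof is correct and takes essentially the same approach as the paper's: both realize the given parameter ideal of $R_{\fp}$, via coset prime avoidance (Kaplansky's Theorem 124, justified by the same height count $\Ht \mathfrak{q} \le i-1 < \Ht \fp$ for the primes to be avoided) together with Nakayama's lemma, as the localization of an ideal generated by part of a system of parameters of $R$, and then conclude with Lemma \ref{commute}; that the paper perturbs inside $J^2$, for $J$ the $\fp$-primary component of the lifted ideal, while you perturb inside $\fp^N$, is an immaterial difference. Your Krull-intersection lemma is precisely \cite[Lemma 3.1]{M15}, which the paper cites at this point instead of reproving.
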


\begin{proof}
Set $d=\dim R$. Then it suffices to show that if $R$ is parameter $F$-closed, so is $R_{\fp}$ for $\fp \in \Spec R$. Let $I$ be a parameter ideal of $R_{\fp}$. Then we have $I=(a_1,\ldots,a_t)R_{\fp}$, where $t=\Ht \fp$ and $a_i \in R$. As there is nothing to prove when $t=0$, we may assume that $t \ge 1$. Then $\fp$ is minimal over $(a_1,\ldots,a_t)$. Let $J$ be the $\fp$-primary component of $(a_1, \ldots, a_t)$. Then we have $I=JR_{\fp}$, $\Ht J=t$ and $\dim R/J \le d-t$. We also have
$$
(a_1)+J^2 \nsubseteq \bigcup_{\fp \in \Ass R, \dim R/\fp = d} \fp.
$$
By \cite[Theorem 124]{Kap74}, we can find $b_1 \in J^2$ such that
$$
x_1:=a_1+b_1 \notin \bigcup_{\fp \in \Ass R, \dim R/\fp = d} \fp.
$$
Thus, $x_1 \in R$ is a parameter element. For each $1<i \le t$, by the same method, we can find $b_i \in J^2$ such that
$$
x_i:=a_i+b_i \notin \bigcup_{\fp \in \Ass R/(x_1, \ldots, x_{i-1}), \dim R/\fp=d-i+1} \fp.
$$
Therefore, we obtain a part of system of parameters $x_1, \ldots, x_t$ of $R$ such that $x_i = a_i + b_i$ for some $b_i \in J^2$ for all $i=1,\ldots,t$. Then since $b_i \in I^2$ for all $i = 1,\ldots,t$, we have $(x_1,\ldots,x_t) \subseteq I$ and
$$
I=(x_1,\ldots,x_t)+I^2.
$$
So it follows from Nakayama's lemma that $I=(x_1,\ldots,x_t)R_{\fp}$. Extend $x_1,\ldots,x_t$ to a full system of parameters $x_1,\ldots,x_d$. By assumption, the ideal $(x_1,\ldots,x_t, x_t^{n}, \ldots, x_d^n)$ of $R$ is Frobenius closed for all $n \ge 1$. Then by \cite[Lemma 3.1]{M15}, $(x_1,\ldots,x_t)$ is also Frobenius closed. Finally, $I$ is Frobenius closed by Lemma \ref{commute} and we have proved that $R_{\fp}$ is parameter $F$-closed.
\end{proof}

The following proposition allows us to pass to completion when we consider parameter $F$-closed condition.

\begin{proposition}
Let $(R, \fm,k)$ be a local ring of characteristic $p>0$. Then $R$ is parameter $F$-closed if and only if so is the $\fm$-adic completion $\widehat{R}$.
\end{proposition}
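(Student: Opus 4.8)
The plan is to transfer the problem across the faithfully flat extension $R\to\widehat R$ and to reduce everything to the behaviour of extended parameter ideals. Two preliminary reductions make this possible. First, if $\fq=(x_1,\ldots,x_d)$ is a parameter ideal of $R$, then $\fq\widehat R$ is a parameter ideal of $\widehat R$, since $\dim\widehat R=\dim R=d$ and $\widehat R/\fq\widehat R\cong R/\fq$ is Artinian. Second, conversely, every parameter ideal of $\widehat R$ is extended from $R$: given a parameter ideal $\widehat\fq=(y_1,\ldots,y_d)\widehat R$, I would choose $M$ with $\fm^M\widehat R\subseteq\widehat\fq$ and pick $x_i\in R$ with $x_i\equiv y_i\pmod{\fm^{M+1}\widehat R}$, using $R/\fm^{M+1}\cong\widehat R/\fm^{M+1}\widehat R$. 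Then $x_i-y_i\in\fm\widehat\fq$, so $\widehat\fq=(x_1,\ldots,x_d)\widehat R+\fm\widehat\fq$, and Nakayama's lemma gives $\widehat\fq=\fq\widehat R$ with $\fq=(x_1,\ldots,x_d)$ a parameter ideal of $R$ (it is $\fm$-primary by faithful flatness). Thus both conditions in the proposition are governed entirely by extended parameter ideals.

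For the implication that $\widehat R$ parameter $F$-closed forces $R$ parameter $F$-closed, let $\fq$ be a parameter ideal of $R$ and take $u\in\fq^F$, so $u^q\in\fq^{[q]}\subseteq(\fq\widehat R)^{[q]}$ for some $q=p^e$. Then $u\in(\fq\widehat R)^F=\fq\widehat R$ by hypothesis, and faithful flatness yields $u\in\fq\widehat R\cap R=\fq$; hence $\fq^F=\fq$. This direction is immediate.

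The substantial direction is that $R$ parameter $F$-closed forces $\widehat R$ parameter $F$-closed, and by the reductions above it suffices to show that $\fq\widehat R$ is Frobenius closed whenever $\fq=(x_1,\ldots,x_d)$ is a parameter ideal of $R$ (Frobenius closed by hypothesis). Here the plan is an approximation argument. Fix $q=p^e$ and let $v\in\widehat R$ satisfy $v^q\in\fq^{[q]}\widehat R$. For an integer $N$ to be chosen large, write $v=w+\delta$ with $w\in R$ and $\delta\in\fm^N\widehat R$. Because $q$ is a power of $p$ the Frobenius is additive, so $v^q=w^q+\delta^q$ with $\delta^q\in\fm^{Nq}\widehat R$, whence $w^q\in\fq^{[q]}\widehat R+\fm^{Nq}\widehat R$. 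As $w^q\in R$, faithful flatness gives $w^q\in(\fq^{[q]}\widehat R+\fm^{Nq}\widehat R)\cap R=\fq^{[q]}+\fm^{Nq}$. Now $\fq^{[q]}$ is again $\fm$-primary, so $R/\fq^{[q]}$ is Artinian and $\fm^{k_0}\subseteq\fq^{[q]}$ for some $k_0$; choosing $N$ with $Nq\ge k_0$ I obtain $w^q\in\fq^{[q]}$, hence $w\in\fq^F=\fq$ by hypothesis. Therefore $v=w+\delta\in\fq\widehat R+\fm^N\widehat R$ for all large $N$, and Krull's intersection theorem in the complete local ring $\widehat R$ gives $v\in\bigcap_N(\fq\widehat R+\fm^N\widehat R)=\fq\widehat R$. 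Thus $(\fq\widehat R)^F=\fq\widehat R$, completing this direction.

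The main obstacle is precisely this last direction: the closures $\fq^F$ and $(\fq\widehat R)^F$ are not formally linked, because Frobenius closure is defined through the non-linear $q$-th power operation, which does not commute with the flat base change $R\to\widehat R$. The approximation argument circumvents this by exploiting the additivity of the $q$-th power map in characteristic $p$ to control the error term $\delta^q$, and the Artinian-ness of $R/\fq^{[q]}$ to absorb it into $\fq^{[q]}$, so that the hypothesis on $R$ can be applied to the genuine element $w\in R$. I expect the only delicate points to be the verification that every parameter ideal of $\widehat R$ is extended and the bookkeeping with $N$ and $k_0$, both of which are routine; no reducedness or $F$-finiteness hypothesis is needed.
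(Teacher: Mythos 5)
Your proof is correct, and while its global skeleton matches the paper's (reduce to extended ideals, with the direction $\widehat{R}$ parameter $F$-closed $\Rightarrow$ $R$ parameter $F$-closed handled identically via $\fq^{[q]}\widehat{R}\cap R=\fq^{[q]}$ and $\fq\widehat{R}\cap R = \fq$), you implement the substantial direction by a different mechanism. The paper argues by contradiction at the level of ideals: since $(\fq\widehat{R})^F$ is primary to the maximal ideal, it is extended, $(\fq\widehat{R})^F=J\widehat{R}$ with $J\subseteq R$ $\fm$-primary; if the closure were strictly larger than $\fq\widehat{R}$ one could pick $x\in J\setminus\fq$, and then $x^q\in\fq^{[q]}\widehat{R}\cap R=\fq^{[q]}$ forces $x\in\fq^F=\fq$, a contradiction. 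You instead argue element by element: approximate an arbitrary $v\in(\fq\widehat{R})^F$ by $w\in R$ modulo $\fm^N\widehat{R}$, use the additivity of the $q$-th power map in characteristic $p$ to control the error $\delta^q\in\fm^{Nq}\widehat{R}$, absorb it into $\fq^{[q]}$ via $\fm^{k_0}\subseteq\fq^{[q]}$, conclude $w\in\fq^F=\fq$, and finish with Krull intersection in $\widehat{R}$. Both arguments ultimately rest on the same fact, that $R$ and $\widehat{R}$ agree modulo high powers of $\fm$ while the relevant ideals are $\fm$-primary; the paper's version is shorter because the extendedness of $\fm$-primary ideals packages the approximation once and for all, whereas yours is self-contained, makes the role of Frobenius additivity explicit, and (a genuine plus) proves in full the statement the paper dismisses as well known, namely that every parameter ideal of $\widehat{R}$ is extended from one of $R$, via your Nakayama argument. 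Your closing remark that no reducedness or $F$-finiteness is needed agrees with the paper, which emphasizes the same point.
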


\begin{proof}
Let $R \to \widehat{R}$ be the completion map. First, assume that $R$ is parameter $F$-closed. Let $Q$ be a parameter ideal of $\widehat{R}$. Then it is well known that if $Q$ is a parameter ideal of $\widehat{R}$, then there is a parameter ideal $\fq$ of $R$ such that $Q=\fq \widehat{R}$. Let us prove that $\fq \widehat{R}=(\fq\widehat{R})^F$. Since $\fq \widehat{R} \subseteq (\fq\widehat{R})^F$ quite evidently, we prove the other inclusion $(\fq\widehat{R})^F \subseteq \fq \widehat{R}$. This amounts to showing that $\fq\widehat{R}$ is Frobenius closed. To prove this by contradiction, assume that $\fq\widehat{R}$ is not Frobenius closed. Since $(\fq\widehat{R})^F$ is $\fm$-primary, there is an $\fm$-primary ideal $J \subseteq R$ such that $J\widehat{R}=(\fq\widehat{R})^F$. Then the inclusion $\fq \subseteq J$ is proper and thus, there exists an element $x \in ((\fq\widehat{R})^F \setminus \fq\widehat{R}) \cap R$. From this, we infer that $x^q \in \fq^{[q]}\widehat{R} \cap R=\fq^{[q]}$ for $q=p^e \gg 0$. This implies that $x \in \fq$, because $\fq=\fq^F$ by the assumption that $R$ is parameter $F$-closed. This is a contradiction. Hence we get $\fq \widehat{R}=(\fq\widehat{R})^F$. Now we have
$$
Q^F = (\fq\widehat{R})^F=\fq \widehat{R}=Q
$$
and hence, $\widehat{R}$ is parameter $F$-closed.

Assume next that $\widehat{R}$ is parameter $F$-closed. Note that $I^F\subseteq(I\widehat{R})^F \cap R$ for every ideal $I$ of $R$. Moreover by the faithful flatness of $R \to \widehat{R}$, we have $I = I\widehat{R} \cap R$ for every ideal $I$ of $R$. Let $\fq$ be a parameter ideal of $R$. Then we have $\fq^F \subseteq (\fq \widehat{R})^F \cap R=\fq \widehat{R} \cap R = \fq$, which implies that $R$ is parameter $F$-closed.
\end{proof}

The deformation property fails for parameter $F$-closed rings. We are grateful to Linquan Ma for bringing this to our attention and the remark below.

\begin{proposition}
\label{failparameter}
Let $R$ be the ring as in Example \ref{E6.1} with a regular element $y \in \fm$. Then $R/yR$ is parameter $F$-closed, but $R$ is not parameter $F$-closed.
\end{proposition}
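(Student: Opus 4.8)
The plan is to prove the two assertions by quite different means. The failure of parameter $F$-closedness for $R$ requires no new work: Theorem \ref{counterexample} exhibits an explicit parameter ideal of the ring of Example \ref{E6.1} that is not Frobenius closed, so $R$ is not parameter $F$-closed directly from Definition \ref{parameterF-closed}. All the effort therefore goes into the quotient, and the cleanest route is to prove the stronger statement that $R/yR$ is $F$-pure; since every ideal of an $F$-pure ring is Frobenius closed, this forces every parameter ideal of $R/yR$ to be Frobenius closed.

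First I would identify $R/yR$ explicitly. Write $S = K[[U,V,Y,Z,T]]$, $I_1 = (T)$, and $I_2 = (UV, UZ, Z(V-Y^2))$, so that $R = S/(I_1 \cap I_2)$ and $y$ is the image of $Y$. The key observation is that $I_2$ is extended from the subring $K[[U,V,Y,Z]]$, so $T$ is a nonzerodivisor modulo $I_2$; this yields
$$
I_1 \cap I_2 = (T) \cap I_2 = T\, I_2 = (TUV,\, TUZ,\, TZ(V-Y^2)).
$$
Reducing modulo $Y$ turns the last generator $TZ(V-Y^2)$ into $TZV$, and hence
$$
R/yR \cong K[[U,V,Z,T]]/(TUV, TUZ, TVZ).
$$
The defining ideal is now a square-free monomial (Stanley--Reisner) ideal, so the quotient is $F$-pure by \cite[Proposition 5.38]{HR76}, exactly as in the proof of Lemma \ref{singh}. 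This completes the argument that $R/yR$ is parameter $F$-closed.

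The main obstacle---indeed the only nontrivial point---is the computation $I_1 \cap I_2 = T\,I_2$ together with the verification that reduction modulo $Y$ collapses the defining ideal to a square-free monomial ideal. Once one notices that $T$ is regular modulo $I_2$, the intersection is forced to be $T\,I_2$ with the three displayed generators, and the passage to $R/yR$ and the resulting $F$-purity are then routine. The point to double-check carefully is that setting $Y=0$ neither enlarges nor shrinks the ideal beyond replacing $TZ(V-Y^2)$ by $TZV$, which is transparent from the explicit generating set.
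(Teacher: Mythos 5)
Your proposal is correct and follows essentially the same route as the paper: the paper's proof simply asserts ``we have proved that $R/yR$ is $F$-pure'' (hence every parameter ideal of $R/yR$ is Frobenius closed) and invokes the non-Frobenius-closed parameter ideal $(a,x_2^n,x_3^n,x_4^n)$ from Theorem \ref{counterexample}. Your only addition is to make explicit the computation the paper leaves implicit---namely $I_1 \cap I_2 = T\,I_2 = (TUV, TUZ, TZ(V-Y^2))$ via regularity of $T$ modulo $I_2$, so that $R/yR \cong K[[U,V,Z,T]]/(TUV,TUZ,TVZ)$ is a Stanley--Reisner quotient and hence $F$-pure by \cite[Proposition 5.38]{HR76}---which is exactly the argument pattern of Lemma \ref{singh} and its subsequent remark.
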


\begin{proof}
We have proved that $R/yR$ is $F$-pure and thus, every parameter ideal of $R/yR$ is Frobenius closed, while we have shown that the parameter ideal $(a,x_2^n,x_3^n,x_4^n)$ of $R$ is not Frobenius closed for $n \gg 0$, where the notation is as in Theorem \ref{counterexample}.
\end{proof}

\begin{remark}
We show that the class of parameter $F$-closed local rings and the class of stably $FH$-finite local rings, are not related to each other.
\begin{enumerate}

\item
Let the notation be as in Example \ref{E6.1}. Then $R$ is stably $FH$-finite. However, $R$ is not parameter $F$-closed.

\item
Let us consider the local ring as in \cite[Example 2.16]{EH08}. This ring is Cohen-Macaulay and $F$-injective. Hence it is parameter $F$-closed by Theorem \ref{maintheoremA}. However, its local cohomology modules are not anti-nilpotent.
\end{enumerate}
\end{remark}

\section{A characterization of $F$-injectivity via limit closure}
\label{sec6}

In this section, we give a sufficient and necessary condition of $F$-injectivity via the notion of limit closure which is defined for any Noetherian local ring. For some studies of the limit closure, we refer the reader to \cite{CQ14}, \cite{Hu98} and \cite{MQ16}.

\subsection{Frobenius action on the top local cohomology}
\begin{definition}
\label{limit}
Let $(R,\fm,k)$ be a local ring, let $M$ be a finitely generated module with $d=\dim M$ and let $\underline{x} =x_1,\ldots,x_t$ be a part of system of parameters of $M$. The \textit{limit closure} of $\underline{x}$ in $M$ is defined as a submodule of $M$:
$$
(\underline{x})_M^{\lim}=\bigcup_{n>0}\big((x_1^{n+1},\ldots,x_t^{n+1})M:_M (x_1 \cdots x_t)^n \big)
$$
with the convention that $(\underline{x})_M^{\lim}=0$ when $t=0$. If $M=R$, then we simply write $(\underline{x})^{\lim}$.
\end{definition}

From the definition, it is clear that $(\underline{x})M \subseteq (\underline{x})_M^{\lim}$.

\begin{remark}
\label{limitclosure}
Let the notation be as in Definition \ref{limit}.
\begin{enumerate}
\item
The quotient $(\underline{x})_M^{\lim}/(\underline{x})M$ is the kernel of the canonical map $H^t(\underline{x};M) \cong M/(\underline{x})M \to H^t_{(\underline{x})}(M)$. This implies the following fact. Let $\fq=(x_1,\ldots,x_t)$ and put $\fq_M^{\lim}:=(\underline{x})^{\lim}_M$. Hence the notation $\fq_M^{\lim}$ is independent of the choice of $x_1, \ldots,x_t$ which generate $\fq$.

\item
It is known that $(\underline{x})M=(\underline{x})_M^{\lim}$ if and only if $\underline{x}$ forms an $M$-regular sequence.

\item
It is shown that the Hochster's monomial conjecture is equivalent to the claim that $\fq^{\lim} \neq R$ for every parameter ideal $\fq$ of $R$.
\end{enumerate}
\end{remark}

For a sequence $\underline{x}=x_1,\ldots,x_t$ in a ring $R$, we set $\underline{x}^{[n]}:=x_1^n,\ldots,x_t^n$ and let $(\underline{x}^{[n]})$ be the ideal generated by the sequence $\underline{x}^{[n]}$. We study the Frobenius action on the top local cohomology.

\begin{theorem}
\label{toplc}
Let $(R,\fm,k)$ be a local ring of characteristic $p>0$ and let $\underline{x}=x_1,\ldots,x_t$ be a sequence of elements of $R$ with $(\underline{x}) \subseteq \fm$. Then we have the following statements.

\begin{enumerate}
\item
The Frobenius action on the top local cohomology $H^t_{(\underline{x})}(R)$ is injective if and only if $(\underline{x}^{[n]})^F \subseteq (\underline{x}^{[n]})^{\lim}$ for all $n \ge 1$, where $(\underline{x}^{[n]})^F$ is the Frobenius closure of $(\underline{x}^{[n]})$.

\item
The Frobenius action on the top local cohomology $H^d_{\fm}(R)$ is injective if and only if $\fq^F \subseteq \fq^{\lim}$ for all parameter ideals $\fq$.
\end{enumerate}
\end{theorem}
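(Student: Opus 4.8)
The plan is to establish (1) directly from the direct-limit presentation of the top local cohomology, and then to obtain (2) as an immediate corollary by applying (1) to an arbitrary system of parameters. Throughout I would set $I=(\underline{x})$ and use $H^t_{(\underline{x})}(R)\cong\varinjlim_n R/(\underline{x}^{[n]})$, where the transition maps are multiplication by $x_1\cdots x_t$ and the Frobenius action sends the class of $a+(\underline{x}^{[n]})$ to the class of $a^p+(\underline{x}^{[pn]})$. The key structural input is Remark \ref{limitclosure}(1), applied to the sequence $x_1^n,\ldots,x_t^n$: the kernel of the canonical map $R/(\underline{x}^{[n]})\to H^t_{(\underline{x})}(R)$ is precisely $(\underline{x}^{[n]})^{\lim}/(\underline{x}^{[n]})$, so a class at level $n$ vanishes in $H^t_{(\underline{x})}(R)$ exactly when its representative lies in $(\underline{x}^{[n]})^{\lim}$.

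With this in hand the forward implication of (1) is easy. If $F$ is injective and $u\in(\underline{x}^{[n]})^F$, say $u^q\in(\underline{x}^{[nq]})$ with $q=p^e$, then $F^e$ annihilates the class of $u$ at level $n$; since $F^e$ is injective this class is zero, whence $u\in(\underline{x}^{[n]})^{\lim}$. The converse is where the work lies. Assuming the containments at every level, it suffices to prove $\ker F=0$. Given a class $\xi$ at level $n$ represented by $u$ with $F\xi=0$, the vanishing of the class of $u^p$ at level $pn$ yields some $k$ with $(x_1\cdots x_t)^{pnk}u^p\in(\underline{x}^{[pn(k+1)]})$.

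I expect the decisive step to be the following manipulation, which I take to be the main obstacle: one must multiply by a power of the product \emph{before} extracting a $p$-th power, so as to produce an honest Frobenius-closure membership. Setting $w=(x_1\cdots x_t)^{nk}u$ gives $w^p\in(\underline{x}^{[n(k+1)]})^{[p]}$, so $w\in(\underline{x}^{[n(k+1)]})^F$, and the hypothesis at level $n(k+1)$ forces $w\in(\underline{x}^{[n(k+1)]})^{\lim}$. Unwinding this last membership (say at internal index $j$) and substituting back for $w$ produces a single relation $(x_1\cdots x_t)^{nk'}u\in(\underline{x}^{[n(k'+1)]})$ for a suitable $k'$ obtained by matching the exponents $n(k+1)(j+1)=n(k'+1)$; this says exactly $u\in(\underline{x}^{[n]})^{\lim}$, i.e. $\xi=0$. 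The point of this book-keeping is that it must go through without any cancellation, since in the intended applications $x_1\cdots x_t$ is a zerodivisor and naive cancellation is unavailable.

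Finally, (2) should follow from (1) with $t=d$. For any system of parameters $\underline{y}=y_1,\ldots,y_d$ one has $H^d_{(\underline{y})}(R)=H^d_{\fm}(R)$ compatibly with the Frobenius actions, and every $(\underline{y}^{[n]})$ is again a parameter ideal of $R$. Thus (1) shows that $F$ is injective on $H^d_{\fm}(R)$ if and only if $(\underline{y}^{[n]})^F\subseteq(\underline{y}^{[n]})^{\lim}$ for all $n$; taking $n=1$ and letting $\underline{y}$ range over all systems of parameters gives $\fq^F\subseteq\fq^{\lim}$ for every parameter ideal $\fq$, while conversely that uniform condition, applied to the parameter ideals $(\underline{y}^{[n]})$, recovers exactly the hypotheses needed to invoke (1). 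Hence the two statements are equivalent.
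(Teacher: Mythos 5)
Your proposal is correct and takes essentially the same route as the paper: both directions rest on the direct-limit presentation of $H^t_{(\underline{x})}(R)$ together with the identification (Remark \ref{limitclosure}(1)) of $(\underline{x}^{[n]})^{\lim}/(\underline{x}^{[n]})$ as the kernel of $R/(\underline{x}^{[n]})\to H^t_{(\underline{x})}(R)$, and (2) is deduced from (1) exactly as the paper indicates. The only cosmetic difference is in the converse of (1): the paper shifts the representative up the direct system so that $a^p\in(\underline{x}^{[np]})$ holds on the nose before invoking the hypothesis, whereas you multiply by $(x_1\cdots x_t)^{nk}$ at the original level and track the exponents back, which is the same manipulation in different bookkeeping.
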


\begin{proof}
As (2) follows immediately from (1), it is sufficient to prove (1). As in the proof of Theorem \ref{T1.2}, we find that the Frobenius action on $H^t_{(\underline{x})}(R)$ is the direct limit of the following commutative diagram:
$$
\begin{CD}
R/(\underline{x}) @>>>  R/(\underline{x}^{[2]})  @>>> R/(\underline{x}^{[3]}) @>>> \cdots \\
@VFVV @VFVV @VFVV (\star)\\
R/(\underline{x}^{[p]}) @>>> R/(\underline{x}^{[2p]}) @>>> R/(\underline{x}^{[3p]}) @>>> \cdots
\end{CD}
$$
where each vertical map is the Frobenius and each map in the horizontal direction is multiplication by $(x_1 \cdots x_t)$ or $(x_1 \cdots x_t)^p$ in the corresponding spot. The above diagram induces the following commutative diagram
$$
\begin{CD}
R/(\underline{x})^{\lim} @>>>  R/(\underline{x}^{[2]})^{\lim}  @>>> R/(\underline{x}^{[3]})^{\lim} @>>> \cdots \\
@V\overline{F}VV @V\overline{F}VV @V\overline{F}VV  (\star \star)\\
R/(\underline{x}^{[p]})^{\lim} @>>> R/(\underline{x}^{[2p]})^{\lim} @>>> R/(\underline{x}^{[3p]})^{\lim} @>>> \cdots
\end{CD}
$$
where each vertical map $\overline{F}$ is induced by $F$ and every horizontal map is injective.

We first prove the ``only if'' part. Suppose that there is an element $a \in (\underline{x}^{[n]})^F \subseteq (\underline{x})^F$ such that $a \notin (\underline{x}^{[n]})^{\lim}$ for some $n$. We find that the image $\overline{a}$ of $a+ (\underline{x}^{[n]})^{\lim}$ via the map $R/(\underline{x}^{[n]})^{\lim} \to H^t_{(\underline{x})}(R)$ is non-zero by the injectivity of the horizontal maps in $(\star \star)$. On the other hand, $a \in (\underline{x})^F$ implies that $\overline{a}$ is a nilpotent element under the Frobenius action. Then it contradicts the injectivity of Frobenius action on $H^t_{(\underline{x})}(R)$.

We next prove the ``if'' part. Suppose that the Frobenius action $F_*$ on $H^t_{(\underline{x})}(R)$ is not injective. Then there is a non-zero element $\overline{a} \in H^t_{(\underline{x})}(R)$ such that $F_*(\overline{a}) = 0$. Applying the exactness of the direct limit for the diagram $(\star)$, there is an element $a \in R$ together with an integer $n>0$ such that $\overline{a}$ is the canonical image of $a+(\underline{x}^{[n]})$ via the map $R/(\underline{x}^{[n]}) \to H^t_{(\underline{x})}(R)$, and $a+(\underline{x}^{[n]})$ is in the kernel of the map $R/(\underline{x}^{[n]}) \overset{F}{\to} R/(\underline{x}^{[np]})$. Therefore, we have $a^p \in (\underline{x}^{[np]})$ and so $a \in (\underline{x}^{[n]})^F$. However, $\overline{a} \neq 0$ implies that $a \notin (\underline{x}^{[n]})^{\lim}$ and this contradicts the assumption $(\underline{x}^{[n]})^F \subseteq (\underline{x}^{[n]})^{\lim}$.
\end{proof}

\begin{corollary}\label{C7.4}
Let $(R,\fm,k)$ be an $F$-injective local ring of characteristic $p>0$. Then $\fq^F \subseteq \fq^{\lim}$ for all parameter ideals $\fq$.
\end{corollary}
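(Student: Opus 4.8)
The plan is to deduce this immediately from part (2) of Theorem \ref{toplc}, which is the substantive result; the corollary amounts to nothing more than unwinding the definition of $F$-injectivity. First I would set $d = \dim R$ and recall that, by definition, an $F$-injective local ring has the property that the Frobenius action on $H^i_{\fm}(R)$ is injective for every $i \ge 0$. In particular, the Frobenius action is injective on the top local cohomology module $H^d_{\fm}(R)$. But Theorem \ref{toplc}(2) asserts exactly that injectivity of Frobenius on $H^d_{\fm}(R)$ is equivalent to the inclusion $\fq^F \subseteq \fq^{\lim}$ holding for all parameter ideals $\fq$. Hence the conclusion follows at once from the ``only if'' direction of that equivalence.

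The only point requiring a moment's care is the degenerate case $d = 0$, where a system of parameters is the empty sequence and the unique parameter ideal is $\fq = (0)$. Here $\fq^{\lim} = 0$ by the convention adopted in Definition \ref{limit}, while $\fq^F = (0)^F$ is the nilradical of $R$; since an $F$-injective local ring is reduced by Lemma \ref{L1.11}, we obtain $\fq^F = 0 = \fq^{\lim}$, so the inclusion holds trivially and no separate argument is needed.

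I do not expect any genuine obstacle here: all of the real work has already been carried out in establishing Theorem \ref{toplc}, and the present statement is simply the specialization of that equivalence to a ring in which Frobenius acts injectively on \emph{all} local cohomology modules, not merely on the top one. The proof is therefore a one-line invocation of the earlier theorem together with the trivial reduced-ness remark for the zero-dimensional case.
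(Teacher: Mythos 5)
Your proof is correct and takes essentially the same route as the paper, which states Corollary \ref{C7.4} without proof as an immediate consequence of the ``only if'' direction of Theorem \ref{toplc}(2), exactly the one-line invocation you give. Your separate treatment of the degenerate case $d=0$ (via the convention $\fq^{\lim}=0$ and reducedness from Lemma \ref{L1.11}) is a harmless extra precaution that the paper leaves tacit.
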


\subsection{$F$-injectivity and Frobenius closure}
We now obtain an ideal-theoretic characterization of $F$-injectivity which is a generalization of Theorem \ref{T1.2}.

\begin{theorem}
\label{characterization}
Let $(R,\fm,k)$ be a local ring of characteristic $p>0$ and of dimension $d>0$. Then the following are equivalent.
\begin{enumerate}
\item
$R$ is $F$-injective

\item
For every filter regular sequence $x_1,\ldots,x_d$, we have
$$
(x_1,\ldots,x_t)^F \subseteq (x_1,\ldots,x_t)^{\lim}
$$
for all $0 \le t \le d$.
\item
There is a filter regular sequence $x_1,\ldots,x_d$ such that
$$
(x_1^n,\ldots,x_t^n)^F \subseteq (x_1^n,\ldots,x_t^n)^{\lim}
$$
for all $0 \le t \le d$ and for all $n \ge 1$.
\end{enumerate}

\end{theorem}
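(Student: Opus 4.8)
The plan is to prove the cycle $(1)\Rightarrow(2)\Rightarrow(3)\Rightarrow(1)$, using Theorem \ref{toplc}(1) as the bridge between limit-closure containments and injectivity of the Frobenius action on top local cohomology, and the Nagel--Schenzel isomorphism (Lemma \ref{nagel-shenzel}) as the bridge between $H^t_{(x_1,\ldots,x_t)}(R)$ and $H^t_{\fm}(R)$. The key structural fact I would exploit repeatedly is that, for a filter regular sequence, $H^t_{\fm}(R)$ sits inside $H^t_{(x_1,\ldots,x_t)}(R)$ as the Frobenius-stable submodule $H^0_{\fm}\big(H^t_{(x_1,\ldots,x_t)}(R)\big)$, so that injectivity of Frobenius on the ambient module descends to the submodule.

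Two of the three implications are short. For $(2)\Rightarrow(3)$, fix any filter regular sequence $x_1,\ldots,x_d$; by Lemma \ref{filter}(3) each power-sequence $x_1^n,\ldots,x_d^n$ is again filter regular, so applying (2) to it and to its length-$t$ initial segment gives $(x_1^n,\ldots,x_t^n)^F\subseteq(x_1^n,\ldots,x_t^n)^{\lim}$ for all $0\le t\le d$ and all $n\ge 1$, which is precisely (3). For $(3)\Rightarrow(1)$, take the sequence furnished by (3); for each $t$ the all-$n$ condition is, by Theorem \ref{toplc}(1) applied to $\underline{x}=x_1,\ldots,x_t$, equivalent to injectivity of Frobenius on $H^t_{(x_1,\ldots,x_t)}(R)$, and via the identification $H^t_{\fm}(R)=H^0_{\fm}\big(H^t_{(x_1,\ldots,x_t)}(R)\big)$ from Lemma \ref{nagel-shenzel} this forces injectivity on $H^t_{\fm}(R)$. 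Since this holds for all $0\le t\le d$ and $H^i_{\fm}(R)=0$ for $i>d$, the ring $R$ is $F$-injective.

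The substance lies in $(1)\Rightarrow(2)$. Fix a filter regular sequence $x_1,\ldots,x_d$ and $0\le t\le d$, write $I=(x_1,\ldots,x_t)$, and let $\bar a$ be the image of $a+I$ under $R/I\to H^t_{(\underline{x})}(R)$, whose kernel is $I^{\lim}/I$ by Remark \ref{limitclosure}(1). The case $t=0$ reads $0^F\subseteq 0$, which holds since $R$ is reduced (Lemma \ref{L1.11}), so assume $t\ge 1$. For $a\in I^F$ the class $\bar a$ is Frobenius-nilpotent; hence it suffices to show $\bar a\in H^0_{\fm}\big(H^t_{(\underline{x})}(R)\big)=H^t_{\fm}(R)$, for then $F$-injectivity of $R$ together with Frobenius-nilpotence of $\bar a$ forces $\bar a=0$, i.e. $a\in I^{\lim}$. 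To show $\bar a$ is $\fm$-torsion I would localize at a prime $\fp\neq\fm$: if $I\not\subseteq\fp$ then $(H^t_{(\underline{x})}(R))_{\fp}=H^t_{IR_{\fp}}(R_{\fp})=0$; if $I\subseteq\fp$ then Lemma \ref{filter}(2) makes $x_1,\ldots,x_t$ an $R_{\fp}$-regular sequence, and since Frobenius closure commutes with localization (Lemma \ref{commute}) we have $a/1\in(IR_{\fp})^F$. Because $R_{\fp}$ is $F$-finite and $F$-injective (Lemma \ref{L1.11}), Proposition \ref{P1.13} shows $IR_{\fp}$ is Frobenius closed, so $a/1\in IR_{\fp}$ and $\bar a_{\fp}=0$. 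As $R\bar a$ is cyclic and vanishes at every $\fp\neq\fm$, it has support in $\{\fm\}$, hence is $\fm$-torsion, giving the claim.

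The main obstacle is that Lemma \ref{L1.11} and Proposition \ref{P1.13} require $F$-finiteness, which is not hypothesized. I would remove it exactly as in the proof of Theorem \ref{maintheoremA}, passing along the faithfully flat tower $R\to\widehat{R}^{\Gamma}\to S:=\widehat{\widehat{R}^{\Gamma}}$ to an $F$-finite, $F$-injective complete local ring $S$ in which $x_1,\ldots,x_t$ stays filter regular. The point requiring care is that both closures descend: faithful flatness gives $(IS)^F\cap R=I^F$, while ideal quotients commute with flat base change, so $I^{\lim}S=(IS)^{\lim}$ and hence $(IS)^{\lim}\cap R=I^{\lim}$. Consequently the containment $(IS)^F\subseteq(IS)^{\lim}$ proved in $S$ intersects back to $I^F\subseteq I^{\lim}$ in $R$, reducing $(1)\Rightarrow(2)$ to the $F$-finite case where the localization argument above applies verbatim.
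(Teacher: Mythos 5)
Your proposal is correct and follows essentially the same route as the paper: the cycle $(2)\Rightarrow(3)\Rightarrow(1)\Rightarrow(2)$, with Theorem \ref{toplc} and Nagel--Schenzel handling $(3)\Rightarrow(1)$, and $(1)\Rightarrow(2)$ proved after the same $\Gamma$-construction reduction to the $F$-finite complete case via the same localization-support argument showing the offending class is $\fm$-torsion, hence lands in $H^t_{\fm}(R)$ where $F$-injectivity kills it. The only (harmless) cosmetic difference is that you verify local vanishing of the specific class coming from $a\in I^F$ using just the Frobenius-closedness of $IR_{\fp}$ from Proposition \ref{P1.13}, whereas the paper establishes injectivity of the Frobenius action on the full localized module $H^t_{(x_1,\ldots,x_t)R_{\fp}}(R_{\fp})$ by invoking Proposition \ref{P1.13} and Corollary \ref{Frobeniuslocal} together with their proofs.
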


\begin{proof} (2) $\Rightarrow$ (3) is clear since if $x_1, \ldots, x_d$ is a filter regular sequence, then so is $(x_1^n,\ldots,x_d^n)$ for all $n \ge 1$ (cf. Lemma \ref{filter}).

For (3) $\Rightarrow$ (1), the case $t=0$ implies that $(0)^F \subseteq (0) $, so $R$ is reduced. So the Frobenius is injective on $H^0_{\fm}(R)$. Moreover  by Theorem \ref{toplc}, the Frobenius action on $H^d_{\fm}(R)$ is injective. For $0<t<d$, the Frobenius action on $H^t_{(x_1, \ldots,x_t)}(R)$ is injective by Theorem \ref{toplc} again. By Nagel-Schenzel's isomorphism, it follows that $H^t_{\fm}(R) \cong H^0_{\fm}\big(H^t_{(x_1,\ldots,x_t)}(R)\big)$ is an $F$-compatible submodule of $H^t_{(x_1,\ldots,x_t)}(R)$. Hence $R$ is $F$-injective.

We next show that (1) $\Rightarrow$ (2). As in the proof of Theorem \ref{maintheoremA}, using the faithful flatness of
the $\Gamma$-construction (as before, take $\Gamma$ sufficiently small): $R \to \widehat{R}^{\Gamma} \to S:=\widehat{\widehat{R}^{\Gamma}}$, we may assume that $R$ is an $F$-finite $F$-injective local ring. Since $R$ is assumed to be $F$-injective, it is reduced. Therefore, we have the assertion with $t=0$. We also have
$(x_1,\ldots,x_d)^F \subseteq (x_1, \ldots, x_d)^{\lim}$ by Theorem \ref{toplc}. So let us consider the case $0<t < d$. We prove that the Frobenius action on $H^t_{(x_1, \ldots,x_t)}(R)$ is injective. Then the implication (1) $\Rightarrow$ (2) follows from Theorem \ref{toplc}. Suppose that $\overline{a} \in H^t_{(x_1, \ldots,x_t)}(R)$ is nilpotent under the Frobenius action. Let $\fp \in \Spec R$ be such that $(x_1, \ldots, x_t) \subseteq \fp \ne \fm$. Then we find that $R_{\fp}$ is $F$-injective by Lemma \ref{L1.11}. On the other hand, $x_1, \ldots, x_t$ is a regular sequence of $R_{\fp}$ by Lemma \ref{filter} (2). By Proposition \ref{P1.13} and Corollary \ref{Frobeniuslocal} together with their proofs, the Frobenius action on $\big(H^t_{(x_1, \ldots,x_t)}(R)\big)_{\fp} \cong H^t_{(x_1, \ldots,x_t)R_{\fp}}(R_{\fp})$ is injective. Hence we have $\Supp_R(R \cdot \overline{a})=\{\fm \}$. Therefore, $\overline{a} \in H^0_{\fm}\big(H^t_{(x_1, \ldots,x_t)}(R)\big) \cong H^t_{\fm}(R)$. Now the $F$-injectivity of $R$ implies that $\overline{a} = 0$. Thus the Frobenius action on $H^t_{(x_1, \ldots,x_t)}(R)$ is injective. The proof is complete.
\end{proof}

We return to the example considered in the previous section.

\begin{remark}
Le $d=\dim R$. Then the \textit{unmixed component} of $R$, which is denoted by $U_R(0)$, is defined to be the largest submodule of $R$ of dimension less than $d$. If $(0)=\bigcap_{\fp \in \mathrm{Ass}(R)} N(\fp)$ is a reduced primary decomposition of the zero ideal, then $U_R(0)=\bigcap_{\dim R/\fp=d} N(\fp)$. In \cite{CQ14}, Cuong and the first author proved the relation: $U_R(0)=\bigcap_{\fq} \fq^{\lim}$, where $\fq$ runs over all parameters ideals. Now let $R$ be the local ring as in Example \ref{E6.1} and we keep the notation. Then $R$ is not equidimensional, $U_R(0) = (t)$ and $b=u^3z^4t$ is contained in $U_R(0)$. At the time of writing this article, the authors do not have an example of a ring $R$ for which $U_R(0)=(0)$.
\end{remark}

\begin{remark}
It is known that $F$-injective singularities in characteristic $p > 0$ have close connections with Du Bois singularities in characteristic $0$. This connection was studied intensively by Schwede in \cite{Sch09}, where it was proved that in characteristic $0$, those singularities of dense $F$-injective type are Du Bois. It was conjectured that the converse is also true (see \cite{TW14}). More recently, this conjecture has been found to be equivalent to a certain conjecture in arithmetic geometry. This conjecture is considered to reflect deep arithmetic nature of the Frobenius action on sheaf cohomology modules. This was first observed in \cite{MS10} as a weakened version of the \textit{ordinary varieties} due to Bloch and Kato. Then Bhatt, Schwede and Takagi observed its connection with Du Bois and $F$-injective singularities and proposed the \textit{weak ordinarity conjecture} in \cite{BST16}. An interesting observation in \cite{BST16} is that, using Voevodsky's $h$-topology and the sheafification of the structure sheaf on the site (a category with a Grothendieck topology) associated to $h$-topology based on Gabber's idea, Bhatt, Schwede and Takagi have found sheaf theoretic characterizations for both Du Bois and $F$-injective singularities. It will be interesting to know how our ideal theoretic characterization of $F$-injectivity is related to the weak ordinarity conjecture.
\end{remark}

\section{Open problems}
\label{sec7}

We list some open problems in this section.

\begin{Problem}
\label{P1}
\rm Let $I$ be a Frobenius closed parameter ideal of a local ring $(R,\fm,k)$ of characteristic $p>0$. Then is $I^{[q]}$ Frobenius closed for all $q = p^e$?
\end{Problem}

Problem \ref{P1} has an affirmative answer when $R$ is Cohen-Macaulay. It seems to be unknown even when the ring is assumed to be Buchsbaum. This problem is also related to asking that, if there is a Frobenius closed parameter ideal, then is every parameter ideal Frobenius closed?

\begin{Problem}
\label{P2}
Suppose that $(R,\fm,k)$ is an $F$-injective local ring. Then how does one find Frobenius or non-Frobenius closed parameter ideals?
\end{Problem}

\begin{Problem}
\label{P3}
Does there exist a local domain of characteristic $p>0$ that is $F$-injective, but has a parameter ideal that is not Frobenius closed?
\end{Problem}

We state the deformation problems.

\begin{Problem}[Deformation problem I]
\label{P4}
Let $(R,\fm,k)$ be a local ring of characteristic $p>0$ with a regular element $x \in \fm$. Assume that $R/xR$ is $F$-injective (resp. stably $FH$-finite). Then is $R$ also $F$-injective (resp. stably $FH$-finite \footnote{Recently, Ma and the first named-author \cite{MQ17} gave an affirmative answer for the deformation of stably $FH$-finite singularities.})?
\end{Problem}

The first partial result concerning Problem \ref{P4} is obtained in \cite{HMS14}. It is shown in \cite{MSS16} that if $(R,\fm)$ is a local ring essentially of finite type over $\mathbb{C}$ such that $R/xR$ is of dense $F$-injective type for a regular element $x \in \fm$, then $R$ is of dense $F$-injective type.

\begin{Problem}[Deformation problem II]
\label{P5}
Let $(R,\fm,k)$ be an equidimensional local ring of characteristic $p>0$ with a regular element $x \in \fm$. Assume that $R/xR$ is parameter $F$-closed. Then is $R$ also parameter $F$-closed?
\end{Problem}

We note that the example in Proposition \ref{failparameter} is not equidimensional. The authors believe that Problem \ref{P5} has a counterexample.

\begin{Problem}
\label{P6}
What about Problem \ref{P1} through Problem \ref{P5} in the graded local case?
\end{Problem}

\begin{Problem}
\label{P7}
It was shown that the class of parameter $F$-closed rings is strictly contained in the class of $F$-injective local rings. This class contains all $F$-pure local rings. It then becomes a new member in the family of $F$-singularities. In view of the correspondence between the singularities of the minimal model program and the singularities defined by Frobenius map, what is the class of the singularities in the minimal model program corresponding to parameter $F$-closed rings?
\end{Problem}

\end{document}